\documentclass[a4paper,10pt]{amsart}

\usepackage{graphicx}
\usepackage{amsmath}
\usepackage{amssymb,amsthm, stmaryrd, enumitem}
\usepackage{comment}
\usepackage{hyperref}

\newtheorem{thm}{Theorem}
\newtheorem{prop}[thm]{Proposition}
\newtheorem{lem}[thm]{Lemma}
\theoremstyle{remark}
\newtheorem{rem}{Remark}
\newtheorem{ex}{Example}

\newcommand{\sA}{\triangleleft}
\newcommand{\sB}{\diamond}

\title{From Aztec diamonds to pyramids: steep tilings}
\author[J. Bouttier, G. Chapuy, S. Corteel]{J\'er\'emie Bouttier, Guillaume Chapuy, Sylvie Corteel}

\thanks{All authors are partially funded by the Ville de Paris, projet
  \'Emergences Combinatoire \`a Paris. SC acknowledges
  support from Agence Nationale de la Recherche, grant number
  ANR-08-JCJC-0011 (ICOMB).  JB and GC acknowledge partial support
  from Agence Nationale de la Recherche, grant number ANR
  12-JS02-001-01 (Cartaplus).}

\address{J.B.: Institut de Physique Th\'eorique, CEA, IPhT, 91191 Gif-sur-Yvette, France, CNRS URA 2306 and D\'epartement de Math\'ematiques et Applications, \'Ecole normale sup\'erieure, 45 rue d'Ulm, F-75231 Paris Cedex 05}
\email{jeremie.bouttier@cea.fr}
\address{G.C.: LIAFA, CNRS et Universit\'e Paris Diderot, Case 7014, F-75205 Paris Cedex 13}
\email{guillaume.chapuy@liafa.univ-paris-diderot.fr}
\address{S.C.: LIAFA, CNRS et Universit\'e Paris Diderot, Case 7014, F-75205 Paris Cedex 13}
\email{corteel@liafa.univ-paris-diderot.fr}

\date{\today}

\begin{document}
\maketitle

\begin{abstract}
  We introduce a family of domino tilings that includes tilings of the
  Aztec diamond and pyramid partitions as special cases. These tilings
  live in a strip of $\mathbb{Z}^2$ of the form $1\leq x-y\leq2\ell$
  for some integer $\ell \geq 1$, and are parametrized by a binary
  word $w\in\{+,-\}^{2\ell}$ that encodes some periodicity conditions
  at infinity. Aztec diamond and pyramid partitions correspond
  respectively to $w=(+-)^\ell$ and to the limit case
  $w=+^\infty-^\infty$. For each word $w$ and for different types of
  boundary conditions, we obtain a nice product formula for the
  generating function of the associated tilings with respect to the
  number of flips, that admits a natural multivariate
  generalization. The main tools are a bijective correspondence with
  sequences of interlaced partitions and the vertex operator formalism
  (which we slightly extend in order to handle Littlewood-type
  identities). In probabilistic terms our tilings map to Schur
  processes of different types (standard, Pfaffian and periodic). We
  also introduce a more general model that interpolates between domino
  tilings and plane partitions.
\end{abstract}

\section{Introduction}

The Aztec diamond of order $\ell$ consists of all unit squares of the
square lattice that lie completely within the region $|x| + |y| \le
\ell+1$, where $\ell$ is a fixed integer. The Aztec diamond theorem
states that the number of domino tilings of the Aztec diamond of order
$\ell$ is $2^{\ell(\ell+1)/2}$~\cite{EKLP1992} (Figure~\ref{fig0}
displays two among the 1024 domino tilings of the Aztec diamond of
order $4$). A more precise result~\cite{EKLP1992b} states that the
generating polynomial of these tilings with respect to the minimal
number of flips needed to obtain a tiling from the one with all
horizontal tiles is $\prod_{i=1}^\ell (1+q^{2i-1})^{\ell+1-i}$.  See
below for the definition of a flip.

\begin{figure}[htpb]
  \centering
  \includegraphics[width=.5\textwidth]{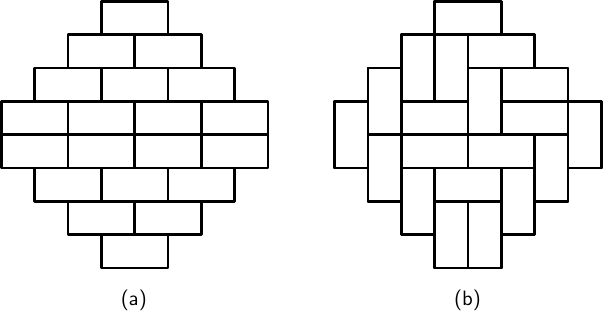}
  \caption{Two domino tilings of the Aztec diamond of size 4: the
    minimal tiling (a), and another tiling (b).}
  \label{fig0}
\end{figure}

\begin{figure}[htpb]
  \centering
  \includegraphics[scale=0.45, page=8]{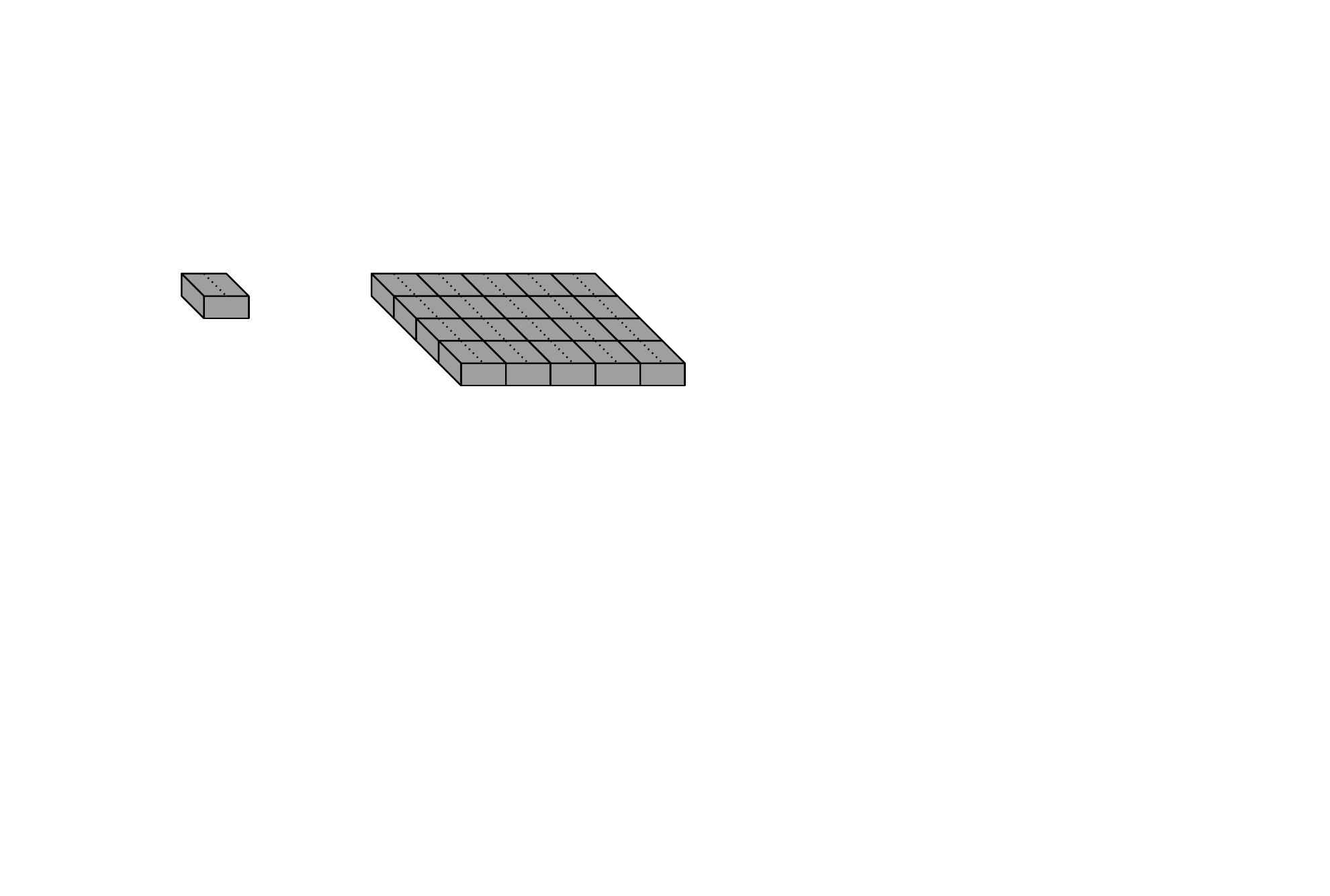}
  \caption{(a) The two types of bricks used in the construction of pyramid
partitions; (b) the fundamental pyramid partition, from which all others are
obtained by removing some bricks; (c) a pyramid partition.}
  \label{fig1}
\end{figure}

\begin{figure}[htpb]
  \centering
  \includegraphics[scale=.45,
page=8]{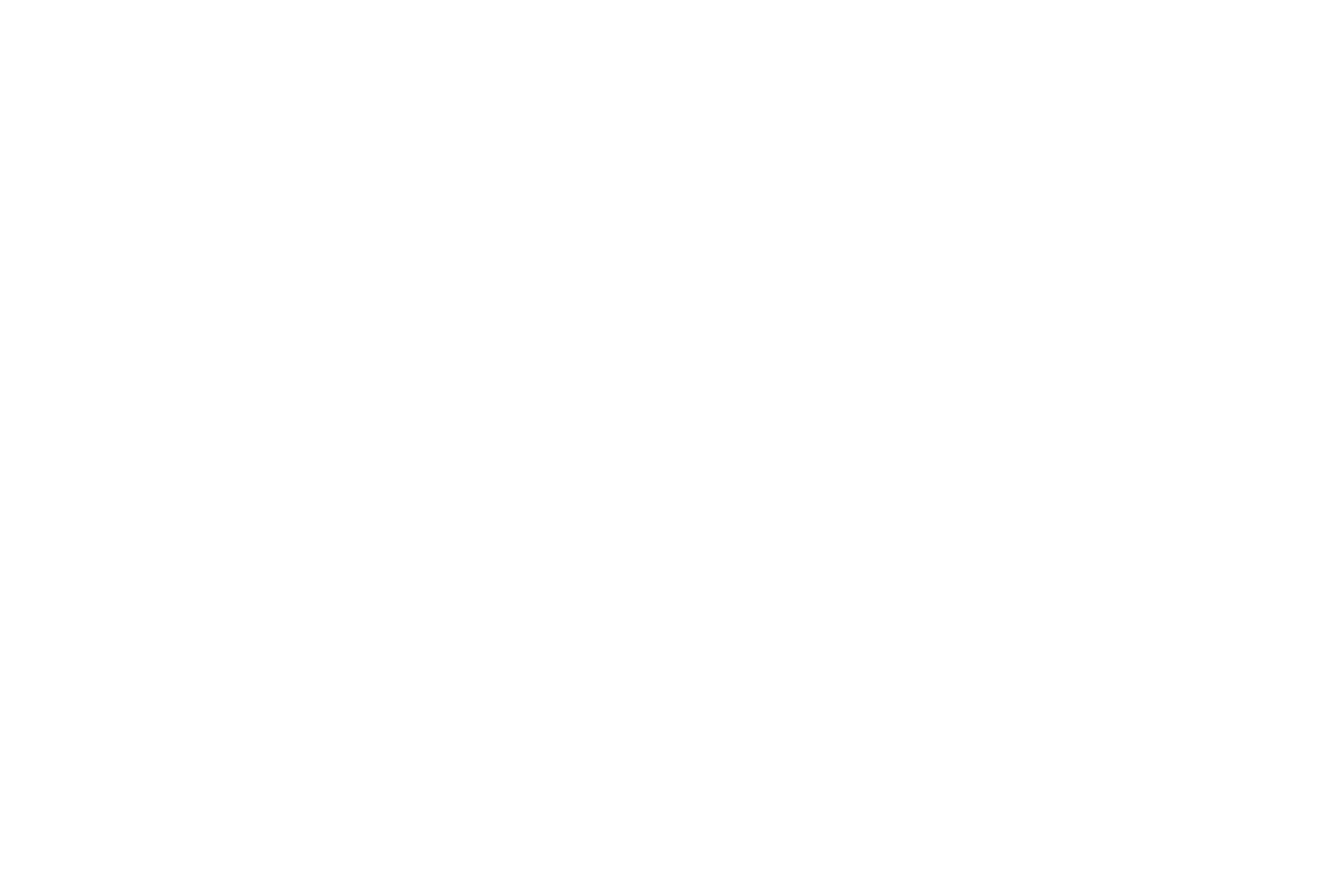}
  \caption{(a) The fundamental pyramid partition viewed from top. It induces a
domino tiling of the plane. (b) The pyramid partition of Figure~\ref{fig1}(c)
viewed from top.}
  \label{fig3}
\end{figure}

A pyramid partition is an infinite heap of bricks of size
$2\times2\times1$ in $\mathbb{R}^3$, as shown on Figure \ref{fig1}. A
pyramid partition has a finite number of maximal bricks and each brick
rests upon two side-by-side bricks, and is rotated 90 degrees from the
bricks immediately below it.  The {\em fundamental pyramid partition}
is the pyramid partition with a unique maximal brick. We denote by
$a_n$ the number of pyramid partitions obtained from the fundamental
pyramid partition after the removal of $n$ bricks. Kenyon
\cite{Kenyon2005} and Szendr{\H{o}}i \cite{S2008} conjectured the
beautiful formula
\begin{equation}
  \label{eq:pyrgf}
  \sum_n a_n q^n=\prod_{k\ge
    1}\frac{(1+q^{2k-1})^{2k-1}}{(1-q^{2k})^{2k}}
\end{equation}
which was proved by Young in two ways: first by using domino shuffling
algorithm \cite{Young:pyramid}, then by using vertex operators
\cite{Young:orbifolds}.

Aztec diamond and pyramid partitions are closely related. Indeed a
pyramid partition can be seen as a domino tiling of the whole plane,
see Figure \ref{fig3}. In this setting the removal of a brick
corresponds to the flip of two dominos.  The goal of this paper is to
show that these are indeed part of the same family of tilings that we
call \emph{steep tilings}. For good measure we also encompass the
so-called plane overpartitions \cite{CSV}.

Informally speaking, steep tilings are domino tilings of an oblique
strip, i.e.\ a strip tilted by $45^\circ$ in the square lattice, that
satisfy a ``steepness'' condition which amounts to the presence of
``frozen regions'' (more precisely, periodic repetitions of the same
pattern) sufficiently far away along the strip in both infinite
directions. We may furthermore consider different types of boundary
conditions along the two rims of the strip, namely pure, free, mixed
and periodic boundary conditions (Aztec diamonds and pyramids
partitions corresponding to the pure case). For a given asymptotic
pattern and for each type of boundary conditions, we are able to
derive an elegant product formula for the generating function of the
associated tilings (in the case of pure boundary conditions it can be
interpreted as a hook-length type formula). Our derivation is based on
a bijection between steep tilings and sequences of interlaced
partitions, and the vertex operator formalism which allows for
efficient computations. Let us mention that, in probabilistic terms,
the sequences of interlaced partitions that we encounter form Schur
processes, either in their original form
\cite{OkounkovReshetikhin:schurProcess} for pure boundary conditions,
in their Pfaffian variant \cite{BR05} for mixed boundary condition, in
their periodic variant \cite{B2007} for periodic boundary conditions,
or finally in a seemingly new ``reflected'' variant for (doubly) free
boundary conditions. The present article however focuses on the
combinatorial results, and probabilistic implications will be explored
in subsequent papers.

We now present the organization of the
paper. Section~\ref{sec:tilings} is devoted to the basic definitions
of steep tilings (Section~\ref{sec:til}), boundary conditions and
flips (Section~\ref{sec:boundary}), which we need to state our main
results (Section~\ref{sec:main}). In Section~\ref{sec:bijection}, we
discuss the bijections between steep tilings and other combinatorial
objects: particle configurations (Section~\ref{sec:particles}),
sequences of integer partitions (Section~\ref{sec:interpar}) and
height functions (Section~\ref{sec:hfun}). In Section
\ref{sec:specialcases}, we highlight some special cases: domino
tilings of Aztec diamond tilings (Section~\ref{sec:aztec}), pyramid
partitions (Section~\ref{sec:pyramids}) and plane overpartitions
(Section~\ref{sec:overpart}). In Section~\ref{sec:vertex} we compute
generating functions of steep tilings via the vertex operator
formalism: we first treat the case of the case of so-called pure
boundary conditions (Section~\ref{sec:vertexpure}) before discussing
arbitrary prescribed boundary conditions (Section~\ref{sec:vertexfix})
and finally free boundary conditions (Section~\ref{sec:vertexpure}),
where we provide a new vertex operator derivation of the Littlewood
identity. In Section~\ref{sec:cyl} we consider the case of periodic
boundary conditions (i.e.\ cylindric steep tilings). In
Section~\ref{sec:extendedmodel}, we define a more general model that
interpolates between steep tilings and (reverse) plane partitions:
after the definition of the model in terms of matchings
(Section~\ref{sec:admmatch}), we turn to the discussion of pure
boundary conditions and flips (Section~\ref{sec:extpure}) before the
enumerative results (Section~\ref{sec:extenum}); we then reinterpret
the extended model in terms of tilings
(Section~\ref{sec:extendedtilings}) and explain how it specializes to
steep tilings (Section~\ref{sec:extreduc}) and plane partitions
(Section~\ref{sec:extpp}). Concluding remarks are gathered in Section
\ref{sec:conc}.

\section{Steep tilings of an oblique strip, and the main results}
\label{sec:tilings}

In this section we define steep tilings and present our main
results. Several notions given here will become more transparent in
Section~\ref{sec:bijection} where we will study in greater depth the
structure of these objects, and in particular their connection with
height functions.

\subsection{Steep tilings of an oblique strip}
\label{sec:til}

\begin{figure}[htpb]
  \centering
  \includegraphics[width=.5\textwidth]{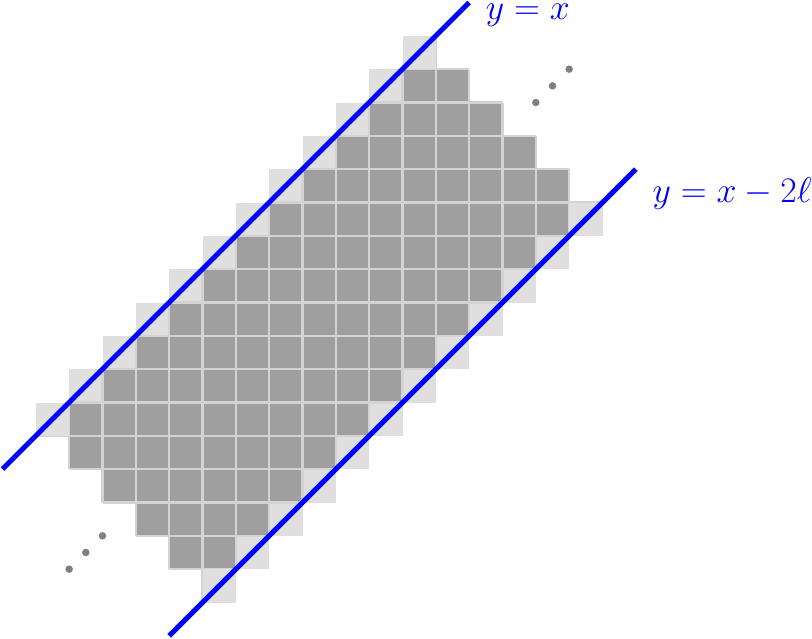}
  \caption{A portion of the oblique strip of width $2\ell=10$. The
    region $R$ covered by dominos necessarily includes the whole
    darker region, and may or may not include the lighter boundary
    squares. }
  \label{fig:obliqueStrip}
\end{figure}

Let us start by describing the general family of domino tilings we are
interested in. Recall that a \emph{domino} is a $2 \times 1$
(horizontal domino) or $1 \times 2$ (vertical domino) rectangle whose
corners have integer coordinates. Fix a positive integer $\ell$, and
consider the \emph{oblique strip} of width $2\ell$ which is the
region of the Cartesian plane comprised between the lines $y=x$ and
$y=x-2\ell$.  A \emph{tiling} of the oblique strip is a set of dominos
whose interiors are disjoint, and whose union $R$, which we call the
\emph{tiled region}, is ``almost'' the oblique strip in the sense that
\begin{equation}
  \label{eq:Rconst}
  \{ (x,y) \in \mathbb{R}^2, |x-y-\ell| \leq \ell-1 \} \subset R
  \subset \{ (x,y) \in \mathbb{R}^2, |x-y-\ell| \leq \ell+1 \},
\end{equation}
see Figure~\ref{fig:obliqueStrip}.  We are forced to use this slightly
unusual definition for a tiling since the oblique strip itself clearly
cannot be obtained as an union of dominos, as it is not a union of
unit squares with integer corners.  Observe that $R$ necessarily
contains every point of the oblique strip with integer coordinates.

\begin{figure}[htpb]
  \centering
  \includegraphics[width=\textwidth]{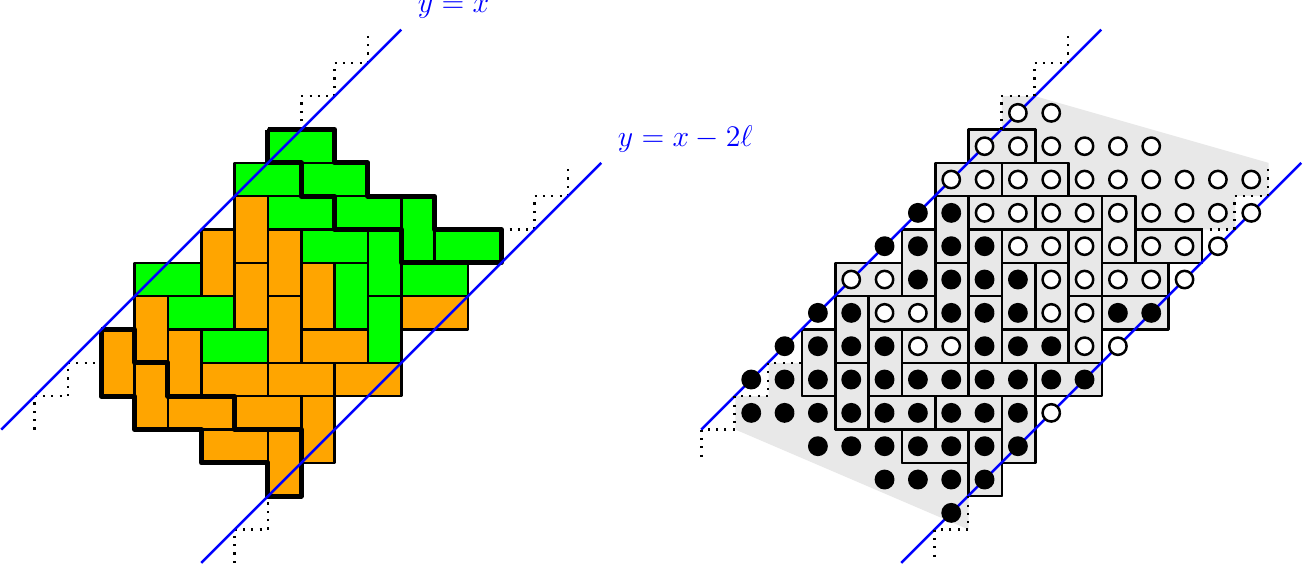}
  \caption{Left: a steep tiling of the oblique strip of width
    $2\ell=10$. North- and east-going (resp.~south- and west-going)
    dominos are represented in green (resp.~orange).  Outside of the
    displayed region, the tiling is obtained by repeating the
    ``fundamental patterns'' surrounded by thick lines. Right: the
    associated particle configuration, as defined in
    Section~\ref{sec:bijection}.}
  \label{fig:steepTiling}
\end{figure}

Following a classical terminology~\cite{CEP1996}, we say that a
horizontal (resp.~vertical) domino is \emph{north-going}
(resp.~\emph{east-going}) if the sum of the coordinates of its top
left corner is odd, and \emph{south-going} (resp.~\emph{west-going})
otherwise. We are interested in tilings of the oblique strip which are
\emph{steep} in the following sense: going towards infinity in the
north-east (resp.~south-west) direction, we eventually encounter only
north- or east-going (resp.~south- or west-going)
dominos. Figure~\ref{fig:steepTiling} displays an example of such a
tiling. The reason for which we use the term ``steep'' is that the
associated height functions (to be defined in Section~\ref{sec:hfun})
grow eventually at the maximal possible slope.

Note that any domino covering a square crossed by the boundary $y=x$
(resp.~$y=x-2\ell$) is either north- or east-going (resp.~south- or
west-going) and thus, sufficiently far away in the south-west
direction (resp.~north-east direction), all such squares are
uncovered. A further property of steep tilings is that they are
eventually periodic in both directions, as expressed by the following
proposition.

\begin{prop}
  \label{prop:periprop}
  Given a steep tiling of the oblique strip of width $2\ell$, there
  exists a unique word $w=(w_1,\ldots,w_{2\ell})$ on the alphabet
  $\{+,-\}$ and an integer $A$ such that, for all $k \in
  \{1,\ldots,\ell\}$, the following hold:
  \begin{itemize}
  \item for all $x>A$, $(x,x-2k)$ is the bottom right corner of a
    domino which is north-going if $w_{2k-1}=+$ and east-going if
    $w_{2k-1}=-$,
  \item for all $x<-A$, $(x,x-2k+2)$ is the top left corner of a
    domino which is west-going if $w_{2k}=+$ and south-going if
    $w_{2k}=-$.
  \end{itemize}
\end{prop}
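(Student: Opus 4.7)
The plan is to exploit the parity structure of dominos in the frozen NE and SW regions of the tiling. I classify each unit square of the oblique strip as \emph{black} if the sum of its bottom-left integer coordinates is even and \emph{white} otherwise; every domino pairs exactly one black square with one adjacent white square. A direct parity computation using the definitions of north-, east-, south- and west-going dominos shows that in the NE frozen region each black square must pair with either its right or its down white neighbor (corresponding respectively to a north-going horizontal or an east-going vertical), while in the SW frozen region each black square must pair with either its left or its up white neighbor.

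The crucial observation is a propagation property along the NE diagonals of black squares, i.e.\ sequences of the form $(a+j, b+j)$, $j \in \mathbb{Z}$, with $b - a$ a fixed even integer. In the NE frozen region, if $(a,b)$ pairs right with the white square $(a+1,b)$, then the adjacent black square $(a+1,b+1)$ cannot pair down (as $(a+1,b)$ is already matched) and must therefore pair right. Thus ``pair right'' propagates in the NE direction along each diagonal, and symmetrically ``pair down'' propagates in the SW direction. The analogous statement holds in the SW frozen region, with ``pair left'' propagating SW and ``pair up'' propagating NE. Since by the steepness hypothesis a suffix (resp.\ prefix) of every NE diagonal lies in the NE (resp.\ SW) frozen region, on each diagonal the pairing type is eventually constant in both directions.

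To extract the word $w$, observe first that by parity the point $(x, x-2k)$ is the bottom-right corner of exactly one domino, which in the NE frozen region is either a north-going horizontal (if the black square $(x-2, x-2k)$ pairs right) or an east-going vertical (if the black square $(x-1, x-2k+1)$ pairs down). Both of these black squares lie on the common NE diagonal $b - a = -2k+2$, so by the propagation property the type at $(x, x-2k)$ is constant in $x$ for $x$ large enough, which defines $w_{2k-1}$. The letters $w_{2k}$ are defined entirely analogously from the top-left corners $(x, x-2k+2)$ in the SW frozen region, using the NE black diagonals $b - a = -2k$. Taking $A$ to be the maximum of the finitely many stabilization thresholds (two per value of $k$) yields a single $A$ valid uniformly for all $k$. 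The main difficulty is the propagation step; once it is in hand, the rest of the argument is a careful bookkeeping of the correspondence between black-square pairings and the corner data stated in the proposition.
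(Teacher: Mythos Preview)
Your approach is essentially the paper's: both hinge on the overlap argument that two consecutive dominos of conflicting type along a diagonal would share a unit square, forcing the type to be eventually constant. The paper phrases this directly in terms of the domino covering the unit square with bottom right corner $(x,x-2k)$, observing that one cannot have an east-going domino there for some $x$ and a north-going one for $x+1$; you rephrase the same obstruction via black/white pairings and propagation along the black diagonal $a-b=2k-2$.

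There is one small wrinkle in your execution. For $k=1$ on the NE side (and symmetrically $k=\ell$ on the SW side), your black diagonal consists of the boundary squares centered on the line $y=x$, and by \eqref{eq:Rconst} these need not be covered. For an uncovered black square the dichotomy ``pairs right or pairs down'' fails, so your step ``$(a,b)$ pairs right $\Rightarrow$ $(a+1,b+1)$ must pair right'' breaks there---and indeed one \emph{can} have a north-going domino at $(x,x-2)$ followed by an east-going one at $(x+1,x-1)$ when the boundary square with bottom-left corner $(x-1,x-1)$ is uncovered. The fix is to run the propagation in the other direction: the \emph{white} square with bottom-left corner $(x-1,x-2k)$ is always interior and hence always covered, and if it is covered by an east-going domino then so is the next white square along the diagonal (this is exactly the overlap the paper invokes). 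In your language, ``pair down'' propagates NE unconditionally because it is forced through the always-covered white squares on the adjacent diagonal, whereas ``pair right'' propagates NE only through covered black squares.
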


\begin{proof}
  Pick $A'$ large enough so that the region $x>A'$ only contains
  north- or east-going dominos and the region $x<-A'$ only contains
  south- or west-going dominos. For $k \in \{1,\ldots,\ell\}$ and
  $x>A'$, consider the unit square with bottom right corner
  $(x,x-2k)$. It is necessarily included in the tiled region by
  \eqref{eq:Rconst}, and thus covered either by a north-going or an
  east-going domino. We then set $w_{2k-1}^{(x)}=+$ in the former case
  and $w_{2k-1}^{(x)}=-$ in the latter. Observe that we cannot have
  $w_{2k-1}^{(x)}=-$ and $w_{2k-1}^{(x+1)}=+$ as otherwise the
  corresponding dominos would overlap. This ensures that the sequence
  $(w_{2k-1}^{(x)})_{x>A'}$ is eventually constant, with value
  $w_{2k-1} \in \{+,-\}$. Similarly, by considering the unit square
  with top left corner $(x,x-2k+2)$, $x<-A'$, we define a sequence
  $(w_{2k}^{(x)})_{x<-A'}$ which is eventually constant with value
  $w_{2k}$. The proposition follows by taking $A$ large enough.
\end{proof}

\begin{ex}
  The steep tiling of Figure~\ref{fig:steepTiling} corresponds to the
  word $w=(+++++---++)$.
\end{ex}

The word $w$ of Proposition~\ref{prop:periprop} is called the
\emph{asymptotic data} of the steep tiling. We denote by
$\mathcal{T}_w$ the set of steep tilings of asymptotic data $w$,
considered up to translation along the direction $(1,1)$.

\subsection{Boundary conditions and flips}
\label{sec:boundary}

Let us now introduce a few further definitions needed to state our
main results. First, we discuss the different types of ``boundary
conditions'' that we may impose on steep tilings. What we call
boundary conditions corresponds actually to the shape of the tiled
region, since by \eqref{eq:Rconst} only the unit squares centered on
the lines $y=x$ and $y=x-2\ell$ are in an unspecified
(covered/uncovered) state, see again
Figure~\ref{fig:obliqueStrip}. Recall that the steepness condition
imposes that all unit squares centered on the line $y=x$ (resp.\
$y=x-2\ell$) are eventually uncovered when going towards infinity in
the south-west (resp.\ north-east) direction, and conversely are
eventually covered in the opposite direction (as a consequence of
Proposition~\ref{prop:periprop}). When we impose no further
restriction on the shape, we say that we have \emph{free boundary
  conditions}. A steep tiling is called \emph{pure} if there is no
``gap'' between uncovered squares on each of the two boundaries, i.e.\
if there exists two half-integers $a,b$ such that the following two
conditions hold:
\begin{itemize}
\item[(a)] the unit square centered at $(x,x)$ is covered if $x\geq a$ and
uncovered if $x<a$,
\item[(b)] the unit square centered at $(x,x-2\ell)$ is covered if
  $x\leq b$ and uncovered if $x> b$.
\end{itemize}
In this case we say of course that we have \emph{pure boundary
  conditions}.  We will see (Remark~\ref{rem:path} page
\pageref{rem:path}) that the quantity $b-a$ is determined by the
word~$w$.  We denote by $\mathcal{T}^0_w$ the set of pure steep
tilings of asymptotic data $w$, considered up to translation along the
direction $(1,1)$. We may also have \emph{mixed boundary conditions}
if (a) holds but not (b), or vice-versa. Finally, \emph{periodic
  boundary conditions} corresponds to the case where the shape of the
tiled region is such that the two boundaries ``fit'' into one another
(precisely, there exists an integer $c$ such that, for each
half-integer $x$, the unit square centered at $(x,x)$ is covered if
and only if the unit square centered at $(x+c,x+c-2\ell)$ is
uncovered). Upon identifying the two boundaries, we obtain a
\emph{cylindric steep tiling}.

We now introduce the notion of flip. A~\emph{flip} is the operation
which consists in replacing a pair of horizontal dominos forming a
$2\times 2$ block by a pair of vertical dominos, or vice-versa. A flip
can be horizontal-to-vertical or vertical-to-horizontal with obvious
definitions.  We say that the flip is centered on the $k$-th diagonal
if the center of the $2\times 2$ block lies on the diagonal $y=x-k$,
for $0< k < 2\ell$. In the case of free boundary conditions, we also
consider \emph{boundary flips} centered on the $0$-th or on the
$2\ell$-th diagonals, where only one domino covering a boundary square
is rotated (see Figure~\ref{fig:flips} below) and where the shaped of
the tiled region is modified. In the case of periodic boundary
conditions, boundary flips must be performed simultaneously on both
sides in order to preserve periodicity (see the discussion in
Section~\ref{sec:cyl}). In all cases, a vertical-to-horizontal flip
centered on the $k$-th diagonal with $k$ even and a
horizontal-to-vertical flip centered on the $k$-th diagonal with $k$
odd are called \emph{ascendent}, other flips being called
\emph{descendent}.  We will see in the next section that for each word
$w\in\{+,-\}^{2\ell}$, there exists a unique element of
$\mathcal{T}^0_w$, called the \emph{minimal tiling}, such that every
element of $\mathcal{T}_w$ (resp.\ $\mathcal{T}^0_w$) can be obtained
from it using a sequence of ascendent flips (resp.\ ascendent
non-boundary flips). Such sequences turn out to have the smallest
possible length among all possible sequences of flips between the
minimal tiling and the tiling at hand, and furthermore for each $0
\leq k \leq 2\ell $ the number of flips centered on the $k$-th
diagonal is independent of the chosen sequence.
 
\subsection{Main results} 
\label{sec:main}

We are now ready to state our main theorems. We first treat the
simplest case of pure tilings.

\begin{thm}\label{thm:main}
Let $w\in\{+,-\}^{2\ell}$ be a word. Let $T_w(q)$
be the generating function of pure steep tilings of asymptotic data $w$, 
where the exponent of $q$ records the minimal number of
flips needed to obtain a tiling from the minimal one. Then one has
\begin{equation}
  \label{eq:main}
  T_w(q) = \prod_{\substack{i<j\\ w_i=+,\ w_j=-\\ i-j \text{ odd}}}
  \left(1+q^{j-i}\right) \prod_{\substack{i<j\\ w_i=+,\ w_j=-\\ i-j
    \text{ even}}} \frac{1}{1-q^{j-i}}.
\end{equation}
\end{thm}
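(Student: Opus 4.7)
The argument rests on two ingredients: the bijection (to be developed in Section~\ref{sec:bijection}) between pure steep tilings and sequences of interlaced integer partitions, and the Okounkov--Reshetikhin vertex operator formalism.

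First, I would invoke the bijection from Section~\ref{sec:interpar} that encodes a pure steep tiling $\tau \in \mathcal{T}^0_w$ as a sequence
$$\emptyset = \lambda^{(0)}, \lambda^{(1)}, \ldots, \lambda^{(2\ell-1)}, \lambda^{(2\ell)} = \emptyset$$
of integer partitions, where for each $i$ the interlacing between $\lambda^{(i-1)}$ and $\lambda^{(i)}$ is one of four types---growing or shrinking, crossed with horizontal-strip or vertical-strip---selected by the letter $w_i$ (growing vs.\ shrinking) and by the parity of $i$ (horizontal vs.\ vertical strip). The bijection is designed so that the minimal number of flips between $\tau$ and the minimal tiling equals $\sum_{i=1}^{2\ell-1}|\lambda^{(i)}|$; in particular, the minimal tiling corresponds to the sequence in which every $\lambda^{(i)}$ is empty.

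Next, I would translate $T_w(q)$ into a vacuum matrix element
$$T_w(q) = \langle\emptyset|\, V_1 V_2 \cdots V_{2\ell}\, |\emptyset\rangle,$$
where each $V_i$ is one of the four vertex operators $\Gamma_\pm(z)$ or $\Gamma'_\pm(z)$: its $\pm$ character is dictated by $w_i$, its primed vs.\ unprimed character by the parity of $i$. The spectral parameters are staggered in $i$---equivalently, one may insert the grading operator $q^{L_0}$ (with $q^{L_0}|\lambda\rangle = q^{|\lambda|}|\lambda\rangle$) between consecutive $V_i$'s and then absorb these factors into the spectral parameters via $q^{L_0}\Gamma_\pm(z) = \Gamma_\pm(q^{\pm 1}z)q^{L_0}$---so that for any $i<j$ with $w_i=+$ and $w_j=-$ the Cauchy-type product $z_iz_j$ of their spectral parameters equals $q^{j-i}$.

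Then I would normal-order this product by commuting every $+$-operator to the right of every $-$-operator, using the standard relations
$$\Gamma_+(x)\Gamma_-(y) = \tfrac{1}{1-xy}\Gamma_-(y)\Gamma_+(x), \qquad \Gamma_+(x)\Gamma'_-(y) = (1+xy)\Gamma'_-(y)\Gamma_+(x),$$
together with the two analogues obtained by priming and by swapping. Each pair $i<j$ with $w_i=+, w_j=-$ contributes exactly one commutation factor, namely $\tfrac{1}{1-q^{j-i}}$ when $V_i$ and $V_j$ have the same primed/unprimed status (i.e.\ $i-j$ is even) and $1+q^{j-i}$ when their statuses differ (i.e.\ $i-j$ is odd). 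After normal-ordering, the $+$-operators annihilate $|\emptyset\rangle$ and the $-$-operators preserve $\langle\emptyset|$, so that only the accumulated scalar survives, yielding exactly~\eqref{eq:main}.

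The main obstacle is compatibility of the various labellings: I need the four interlacing types arising from the tiling bijection to match the four vertex operators in such a way that ``$\pm$ from $w_i$'' and ``primed from parity of $i$'' are the correct dictionaries, and I need the flip count to match the grading $\sum_i|\lambda^{(i)}|$ so that a uniform choice of staggered spectral parameters makes $z_iz_j$ equal to $q^{j-i}$ for all relevant pairs. Once this bookkeeping is in place, the actual generating-function computation is a mechanical application of four commutation identities.
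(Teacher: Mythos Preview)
Your proposal is correct and follows essentially the same route as the paper: the bijection of Proposition~\ref{prop:tilseqbij} (with property~C giving the flip count as $\sum_i|\lambda^{(i)}|$), the vertex operator expression of Lemma~\ref{lem:gfInTermsOfOperators}, and then normal-ordering via the commutation relations~\eqref{eq:gamcom}. The only cosmetic difference is that the paper first proves the multivariate Theorem~\ref{thm:mainWithStanleyWeights} and specializes $x_i=q$, whereas you work directly in the univariate setting; also, a small terminological slip---the $\Gamma_+$ operators \emph{fix} $|\emptyset\rangle$ rather than annihilate it---but your conclusion is unaffected.
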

\begin{thm}\label{thm:mainWithStanleyWeights}
Let $w\in\{+,-\}^{2\ell}$ be a word. Let $T_w\equiv T_w(x_1,\dots,x_{2\ell-1})$
be the generating function of pure steep tilings of asymptotic data $w$,
where the exponent of the variable $x_i$ records 
the number of flips centered on the $i$-th diagonal 
in a shortest sequence of flips from the
minimal tiling. Then one has:
\begin{equation}
  \label{eq:mainWithStanleyWeights}
  T_w = \prod_{\substack{i<j\\ w_i=+,\ w_j=-\\ i-j \text{ odd}}}
  \left(1+x_ix_{i+1}\dots x_{j-1}\right)
  \prod_{\substack{i<j\\ w_i=+,\ w_j=-\\ i-j \text{ even}}}
  \frac{1}{1-x_ix_{i+1}\dots x_{j-1}}.
\end{equation}
\end{thm}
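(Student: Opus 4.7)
The plan is to derive Theorem \ref{thm:mainWithStanleyWeights} (of which Theorem \ref{thm:main} is simply the specialization $x_1 = \cdots = x_{2\ell-1} = q$) via the bijection between pure steep tilings and sequences of interlaced partitions announced in Section \ref{sec:interpar}, followed by a vertex operator computation. Under this bijection, a pure steep tiling of asymptotic data $w$ corresponds to a sequence $\emptyset = \lambda^{(0)}, \lambda^{(1)}, \ldots, \lambda^{(2\ell)} = \emptyset$ in which consecutive partitions are related by a horizontal or vertical strip (alternating with the parity of $i$), the direction of inclusion being dictated by $w_i$; the minimal tiling corresponds to the all-empty sequence. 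The key combinatorial input is that the number of flips centered on the $i$-th diagonal in \emph{any} shortest flip sequence from the minimal tiling equals $|\lambda^{(i)}|$, since an elementary ascendent flip on the $i$-th diagonal adds exactly one cell to $\lambda^{(i)}$ while leaving the other partitions untouched. Consequently
\begin{equation*}
T_w(x_1, \ldots, x_{2\ell-1}) = \sum_{(\lambda^{(i)})} \prod_{i=1}^{2\ell-1} x_i^{|\lambda^{(i)}|},
\end{equation*}
summed over all sequences with the prescribed interlacings.

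The second step is to recognize this weighted sum as a matrix element in the bosonic Fock space. Writing $\Gamma_\pm$ and $\Gamma'_\pm$ for the two standard families of half-vertex operators whose action encodes horizontal-strip and vertical-strip transitions between partitions, and $L_0$ for the grading operator $L_0|\lambda\rangle = |\lambda|\cdot|\lambda\rangle$, I would obtain the identity
\begin{equation*}
T_w = \langle 0 | V^{(1)}_{w_1}(1) \, x_1^{L_0} \, V^{(2)}_{w_2}(1) \, x_2^{L_0} \cdots x_{2\ell-1}^{L_0} \, V^{(2\ell)}_{w_{2\ell}}(1) | 0 \rangle,
\end{equation*}
where $V^{(i)}$ denotes $\Gamma$ if $i$ has one parity and $\Gamma'$ if it has the other, the index $\pm$ matching $w_i$. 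Using the conjugation rules $z^{L_0}\Gamma_+(u) = \Gamma_+(u/z) z^{L_0}$ and $z^{L_0}\Gamma_-(u) = \Gamma_-(uz) z^{L_0}$ (and likewise for $\Gamma'$), one pushes every $x_i^{L_0}$ to the right, where it acts trivially on $|0\rangle$. After this manipulation the matrix element becomes $\langle 0 | \prod_i V^{(i)}_{w_i}(z_i) | 0 \rangle$ for suitable monomials $z_i$ in the $x_j$ satisfying $z_i z_j = x_i x_{i+1} \cdots x_{j-1}$ whenever $i < j$ with $w_i = +$ and $w_j = -$.

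Finally I would invoke the standard commutation identities
\begin{equation*}
\Gamma_+(a)\Gamma_-(b) = \frac{1}{1-ab}\Gamma_-(b)\Gamma_+(a), \qquad \Gamma'_+(a)\Gamma'_-(b) = \frac{1}{1-ab}\Gamma'_-(b)\Gamma'_+(a),
\end{equation*}
\begin{equation*}
\Gamma_+(a)\Gamma'_-(b) = (1+ab)\Gamma'_-(b)\Gamma_+(a), \qquad \Gamma'_+(a)\Gamma_-(b) = (1+ab)\Gamma_-(b)\Gamma'_+(a),
\end{equation*}
to normal-order the product, moving every ``$-$'' operator past every ``$+$'' operator to its left. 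Only pairs $(i,j)$ with $w_i = +$, $w_j = -$ and $i < j$ contribute nontrivially; each such pair yields a factor $(1 - z_iz_j)^{-1} = (1 - x_i x_{i+1} \cdots x_{j-1})^{-1}$ when $V^{(i)}$ and $V^{(j)}$ are of the same type (i.e., when $j-i$ is even) and $(1 + z_iz_j) = (1 + x_i x_{i+1} \cdots x_{j-1})$ when they are of different types (i.e., when $j - i$ is odd), giving exactly \eqref{eq:mainWithStanleyWeights}. The main delicate point of the whole proof will be the combinatorial identification in the first paragraph between flip counts on each diagonal and the integers $|\lambda^{(i)}|$, independent of the choice of shortest flip sequence; once this is in place, the vertex operator computation is entirely mechanical.
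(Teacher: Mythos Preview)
Your proposal is correct and follows essentially the same route as the paper: the bijection with interlaced sequences (Proposition~\ref{prop:tilseqbij}, whose Property~C is precisely the ``delicate point'' you flag), followed by the vertex operator computation with the commutation relations~\eqref{eq:gamcom}. The only cosmetic difference is that you insert grading operators $x_i^{L_0}$ and conjugate them to the right, whereas the paper (Lemma~\ref{lem:gfInTermsOfOperators}) performs the equivalent step directly by setting the argument of each $\Gamma$ to $y_i^{w_i}$ with $y_i = x_i x_{i+1}\cdots x_{2\ell}$; both lead to the same product $z_i z_j = x_i\cdots x_{j-1}$ in the commutation factors.
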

\noindent Of course Theorem~\ref{thm:main} is a direct consequence of
Theorem~\ref{thm:mainWithStanleyWeights} (which will be proved in
Section~\ref{sec:vertexpure}), by letting $x_i=q$ for each $i$.

Note that these theorems are hook formulas. Indeed, given a word $w$,
one can form the Young diagram $\lambda(w)$ delimited by the path
whose $i^{th}$ step is south if $w_i=+$ and west otherwise. Then
Theorem~\ref{thm:main} exactly states that
\begin{equation}
  T_w(q)=\prod_{c\in  \lambda(w)} (1+\epsilon(c)q^{h(c)})^{\epsilon(c)},
\end{equation}
where the product is over all cells of $\lambda(w)$, $h(c)$ denotes
the hook length of the cell $c$ (i.e.\ the number of cells to the
right or under $c$ plus one) and $\epsilon(c)=(-1)^{h(c)+1}$.  In
particular this shows that one can remove the $-$'s (resp.~the $+$'s)
placed at the beginning (resp.\ the end) of the word $w$ without
changing the value of the generating function, since this does not
change the shape of the Young diagram.  This fact is easily
interpreted geometrically, as one can check that such letters induce
regions of the oblique strip where the tiling is entirely fixed in all
configurations.  Note also that the odd hooks give a term to the
numerator of $T_w$ and the even hooks a term to the denominator of
$T_w$. Now, it is easily seen that for any $\ell$ the only Young
diagram with $\ell$ parts and only odd hooks is the staircase shape
$(\ell,\ell-1,\ldots ,2,1)$.  Therefore for a given $\ell$, the only
family of pure steep tilings whose generating function is a polynomial
is the one with asymptotic data $w=(+-)^\ell$ (upon removing the
possible trivial leading $-$'s and and trailing $+$'s). As we will see
in Section~\ref{sec:aztec}, this corresponds to tilings of the Aztec
diamond of size $\ell$ (and as a consequence of
Theorem~\ref{thm:mainWithStanleyWeights} we recover a formula due to
Stanley, see Remark~\ref{rem:stanley} page \pageref{rem:stanley}).

We then deal with the free and periodic boundary conditions. Here we
only state the univariate analogues of Theorem~\ref{thm:main}, but
multivariate formulas analogous to that of
Theorem~\ref{thm:mainWithStanleyWeights} are given in the respective
Sections~\ref{sec:vertexfree} and \ref{sec:cyl}. Let us introduce the
shorthand notation
\begin{equation}
  \label{eq:phidef}
  \varphi_{i,j}(x) =
  \begin{cases}
    1 + x & \text{if $j-i$ is odd,}\\
    1/(1-x) & \text{if $j-i$ is even.}
  \end{cases}
\end{equation}
so that \eqref{eq:main} may be rewritten in the simpler form
\begin{equation}
  T_w(q) = \prod_{\substack{i<j\\ w_i=+,\ w_j=-}} \varphi_{i,j}(q^{j-i}).
\end{equation}

\begin{thm}
  \label{thm:mainfree}
  Let $w\in\{+,-\}^{2\ell}$ be a word. Let $F_w(q)$ and $M_w(q)$ be
  the generating functions of steep tilings of asymptotic data $w$ and
  with respectively free and mixed (pure-free) boundary conditions,
  where the exponent of $q$ records the minimal number of flips needed
  to obtain a tiling from the minimal one. Then one has
  \begin{equation}
    \label{eq:mainmixed}
    M_w(q) = T_w(q) \prod_{i:\, w_i=+} \frac{1}{1-q^{m_i}}
    \prod_{\substack{i < j \\ w_i=w_j=+}} \varphi_{i,j}(q^{m_i+m_j})
  \end{equation}
  and
  \begin{equation}
    \label{eq:mainfree}
    F_w(q) = T_w(q) \prod_{k=0}^\infty \left( \frac{1}{1-q^{(k+1)L}}
        \prod_i \frac{1}{1-q^{kL+m_i}}
        \prod_{i < j} \varphi_{i,j}(q^{2kL+m_i+m_j}) \right)
  \end{equation}
  where $T_w(q)$ and $\varphi_{i,j}(\cdot)$ are as above, and where we
  use the further shorthand notations $L=2\ell+1$ and
  \begin{equation}
    \label{eq:midef}
    m_i =
    \begin{cases}
      2\ell+1-i & \text{if $w_i=+$} \\
      i & \text{if $w_i=-$}
    \end{cases}
    \qquad (i=1,\ldots,2\ell).
  \end{equation}
\end{thm}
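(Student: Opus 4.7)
The plan is to reduce both generating functions to matrix elements of products of vertex operators, following the pattern used for Theorem~\ref{thm:main}. By the bijection of Section~\ref{sec:bijection}, a steep tiling with asymptotic data $w\in\{+,-\}^{2\ell}$ corresponds to a sequence of integer partitions $(\lambda^{(0)},\ldots,\lambda^{(2\ell)})$ with interlacing relations prescribed by $w$, and the flip number from the minimal tiling is linear in $\sum_i|\lambda^{(i)}|$. Pure boundary conditions impose $\lambda^{(0)}=\lambda^{(2\ell)}=\emptyset$, mixed (pure--free) conditions free one of the two extremes, and free conditions free both. Accordingly, the generating function is a matrix element $\langle A|\mathcal{V}_w(q)|B\rangle$ of the same operator $\mathcal{V}_w(q)$ that already appears in the pure case, where $\langle A|$ and $|B\rangle$ are either the vacuum or a suitable boundary state depending on the boundary condition.

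For the mixed case I would replace the relevant vacuum by the boundary state $|\Omega_q\rangle:=\prod_{k\geq 0}\Gamma_-(q^{k+1/2})|\emptyset\rangle$, which when expanded in the partition basis produces $\sum_\lambda q^{|\lambda|}|\lambda\rangle$ up to a uniform shift absorbed into the exponents~$m_i$. The quantities $m_i$ defined in~\eqref{eq:midef} measure precisely the distance from position $i$ to the relevant extremity of $w$, so they record the spectral parameter picked up by an operator initially at position $i$ as it is commuted all the way through $\mathcal{V}_w(q)$ to reach $|\Omega_q\rangle$. Using the standard commutation relations $\Gamma_+(x)\Gamma_-(y)=(1-xy)^{-1}\Gamma_-(y)\Gamma_+(x)$ and $\Gamma_+(x)\Gamma'_-(y)=(1+xy)\,\Gamma'_-(y)\Gamma_+(x)$, one obtains for each pair $(i,j)$ with $w_i=w_j=+$ a factor $\varphi_{i,j}(q^{m_i+m_j})$ from the pairwise cross-commutations, and for each $i$ with $w_i=+$ a factor $(1-q^{m_i})^{-1}$ from the diagonal self-contractions with $|\Omega_q\rangle$. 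Multiplying by the already-known $T_w(q)$ yields~\eqref{eq:mainmixed}.

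The free case $F_w(q)=\langle\Omega_q|\mathcal{V}_w(q)|\Omega_q\rangle$ is where the real work lies. The two infinite $\Gamma$-products now have to be commuted both through $\mathcal{V}_w(q)$ and through each other, and each full round trip across a word of length $L=2\ell+1$ contributes a spectral shift $2kL$, which accounts for the index $k\geq 0$ in the outer product of~\eqref{eq:mainfree}; the prefactor $(1-q^{(k+1)L})^{-1}$ and the single-index factor $(1-q^{kL+m_i})^{-1}$ arise respectively from diagonal and mixed round-trip contractions. The main obstacle is that a naive Wick-type reduction does not apply, because $|\Omega_q\rangle$ does not correspond to a Cauchy sum $\sum_\lambda s_\lambda(x)s_\lambda(y)$ of pairs but to the single-index sum $\sum_\lambda s_\lambda$, which is governed by a Littlewood rather than a Cauchy identity. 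I would deal with this by establishing a reflection-type commutation relation of the form $\Gamma_+(x)|\Omega_q\rangle=\Phi(x)|\Omega_q\rangle$ modulo $\Gamma_-$ factors that can be reabsorbed into $|\Omega_q\rangle$, iterating this reflection until the two boundary states meet, and finally evaluating the residual overlap $\langle\Omega_q|\Omega_q\rangle$. This step is precisely the \emph{new vertex operator derivation of the Littlewood identity} announced in the introduction; once it is in place, an accounting of spectral parameters as in the mixed case produces the full product~\eqref{eq:mainfree}.
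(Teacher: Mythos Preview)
Your overall strategy coincides with the paper's: express the generating functions as matrix elements of the same vertex-operator product $\mathcal{V}_w$ used in the pure case, but with the vacuum replaced on one or both sides by a ``free boundary state''; then evaluate via reflection relations and, in the doubly-free case, iterate the reflections so that spectral parameters acquire a geometric shift at each round trip, the residual overlap furnishing the Euler-type factor $\prod_k 1/(1-q^{kL})$.

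However, there is a concrete error in your identification of the boundary state. You set $|\Omega_q\rangle:=\prod_{k\geq 0}\Gamma_-(q^{k+1/2})|\emptyset\rangle$ and claim this equals $\sum_\lambda q^{|\lambda|}|\lambda\rangle$ up to a shift. It does not: that product expands as $\sum_\lambda s_\lambda(q^{1/2},q^{3/2},\ldots)|\lambda\rangle$, and the principal specialization of $s_\lambda$ is a hook-content product, not $q^{|\lambda|}$. With your $|\Omega_q\rangle$ the mixed case would become a Cauchy computation and produce, for each $i$ with $w_i=+$, an infinite product $\prod_{k\geq 0}(1-q^{m_i+k+1/2})^{-1}$ rather than the single factor $(1-q^{m_i})^{-1}$ of~\eqref{eq:mainmixed}. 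The correct boundary state is simply $|\underline{v}\rangle=\sum_\lambda v^{|\lambda|}|\lambda\rangle$ (specialized to $v=1$ after the shift), which is \emph{not} a coherent state of the $\Gamma_-$'s; this is exactly why, as you yourself note, a Littlewood rather than Cauchy mechanism is needed. The key reflection relation, stated precisely, is
\[
\Gamma_+(t)\,|\underline{v}\rangle=\frac{1}{1-tv}\,\Gamma_-(tv^2)\,|\underline{v}\rangle
\]
(and similarly for $\Gamma'_+$). The resulting $\Gamma_-$ is not ``reabsorbed'' into $|\underline{v}\rangle$; in the mixed case it is moved left and annihilated by $\langle\emptyset|$, and in the free case it is moved left, reflected off $\langle\underline{u}|$ by the dual relation, and sent back, each full round trip multiplying the parameter by $u^2v^2$. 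With this correction your outline is the paper's proof.
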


\begin{thm}
  \label{thm:maincyl}
  Let $w\in\{+,-\}^{2\ell}$ be a word containing at least one $+$ and
  one $-$. Let $C_w(q)$ be the generating function of cylindric steep
  tilings of asymptotic data $w$, where the exponent of $q$ records
  the minimal number of flips needed to obtain a tiling from the
  minimal one. Then one has
  \begin{equation}
    \label{eq:cylgfq}
    C_w(q) = T_w(q) \prod_{k=1}^\infty \left( \frac{1}{1-q^{2k\ell}}
      \prod_{\substack{i,j\\ w_i=+,\ w_j=-}} \varphi_{i,j}(q^{2k\ell+j-i}) \right).
  \end{equation}
\end{thm}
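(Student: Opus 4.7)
The plan is to parallel the derivation of Theorem~\ref{thm:main} from Section~\ref{sec:vertexpure}, but with the vacuum-to-vacuum matrix element of vertex operators replaced by a trace over the Fock space. This is the standard vertex-operator realization of Borodin's periodic Schur process~\cite{B2007}. Concretely, I would first use the bijection between steep tilings and sequences of interlaced partitions (Section~\ref{sec:bijection}), applied to the cylindric setting: identifying the two rims of the oblique strip turns a sequence $(\lambda^{(0)},\ldots,\lambda^{(2\ell)})$ into a cyclic one in which $\lambda^{(2\ell)}$ equals $\lambda^{(0)}$ up to the horizontal shift corresponding to one full turn around the cylinder. The flip-counting statistic translates into a weight of the form $\prod_k z_k^{|\lambda^{(k)}|}$ as in the pure case, with an extra factor $u^{|\lambda^{(0)}|}$ encoding the cylindric shift; upon specialization $u$ will be $q^{2\ell}$.

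I would then write the generating function as
\begin{equation*}
  C_w(q) = \mathrm{Tr}\bigl( u^{H}\, \Gamma^{\epsilon_1}_{w_1}(z_1)\, \Gamma^{\epsilon_2}_{w_2}(z_2) \cdots \Gamma^{\epsilon_{2\ell}}_{w_{2\ell}}(z_{2\ell}) \bigr),
\end{equation*}
where $H$ is the grading (energy) operator on Fock space and the $\Gamma^{\epsilon}_{\pm}$ are the four families of half-vertex operators attached to the two parities of interlacing used in Section~\ref{sec:vertexpure}. The proof then proceeds in two stages. In the first stage I commute every $\Gamma_+$-type operator past every $\Gamma_-$-type operator sitting to its right inside the product; each such commutation produces exactly one factor $\varphi_{i,j}(z_i z_{i+1} \cdots z_{j-1})$, and collecting them rebuilds a copy of $T_w(q)$ in its multivariate form, exactly as in the pure case. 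In the second stage I use the cyclic property of the trace together with the rescaling identity $u^{H} \Gamma_\pm(z) u^{-H} = \Gamma_\pm(u^{\pm 1} z)$ to cycle each normal-ordered $\Gamma_+$ entirely through the product; doing so $k$ times multiplies its argument by $u^{k}$, so iterating yields an infinite product of factors $\varphi_{i,j}(q^{2k\ell + j - i})$ over $k\geq 1$, together with the bare trace $\mathrm{Tr}(u^H) = \prod_{k \geq 1}(1-u^k)^{-1} = \prod_{k \geq 1}(1 - q^{2k\ell})^{-1}$. Combining the two stages produces \eqref{eq:cylgfq}, and leaving the $z_i$ generic will give the multivariate refinement announced in Section~\ref{sec:cyl}.

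The main obstacle I anticipate is twofold. First, the cyclic bijection has to be set up carefully, because the naive identification of the two boundaries allows for nontrivial relative heights on the two rims, and one has to check that the shift variable $u$ really specializes to $q^{2\ell}$ when the statistic counts flips. Second, the infinite product coming from the cyclic commutations must be convergent as a formal power series in $q$: this is precisely where the hypothesis that $w$ contains both a $+$ and a $-$ enters, since otherwise the operator product in the trace consists of a single type $\Gamma_+$ or $\Gamma_-$, the cyclic commutations produce no damping factors, and $\mathrm{Tr}(u^H \Gamma_\pm(z_1)\cdots\Gamma_\pm(z_{2\ell}))$ fails to be summable. Once convergence is guaranteed, the formula is forced by the algebra.
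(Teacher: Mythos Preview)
Your proposal is correct and matches the paper's proof of Theorem~\ref{thm:cylgf} (which specializes to Theorem~\ref{thm:maincyl}): trace over Fock space, energy-operator rescaling, cyclic wrapping of the $\Gamma_+/\Gamma'_+$ operators, and the limiting identification $\mathrm{Tr}(y^H)=\prod_{k\ge1}(1-y^k)^{-1}$. One clarification: the hypothesis that $w$ contain both a $+$ and a $-$ is not needed for formal convergence of the trace (it is a well-defined power series in $y=q^{2\ell}$ regardless), but rather for the flip interpretation itself---in the degenerate cases $w=+^{2\ell}$ or $-^{2\ell}$ the cylindric tilings have forced cycles and admit no flips at all (see Proposition~\ref{prop:cyltilseqbij}, property~C$'$, and the remark following it), so the statistic ``minimal number of flips from the minimal tiling'' becomes meaningless.
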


\noindent Those two theorems will be proved in the respective
Sections~\ref{sec:vertexfree} and \ref{sec:cyl}.

\section{The fundamental bijection}
\label{sec:bijection}

The purpose of this section is to establish a general bijection
between steep tilings and sequences of interlaced partitions. The
connection is best visualized by introducing particle configurations
as an intermediate step. We will also discuss other avatars of the
same objects, namely height functions, which are convenient to
understand the flips.

\subsection{Particle configurations}
\label{sec:particles}

A \emph{site} is a point $(x,y) \in (\mathbb{Z}+\frac{1}{2})^2$
(i.e.~the center of a unit square with integer corners) such that $0
\leq x - y \leq 2\ell$. Each site may be occupied by zero or one
particle (graphically, we represent empty and occupied sites by the
respective symbols $\circ$ and $\bullet$).

Given a steep tiling of the oblique strip, we define a \emph{particle
  configuration} as follows. If a site is covered by a north- or
east-going domino, or if it is uncovered and belongs to the line
$y=x-2\ell$, then we declare it empty. Conversely, if a site is
covered by a south- or west-going domino, or if it is uncovered and
belongs to the line $y=x$, then we declare it occupied. Condition
$\eqref{eq:Rconst}$ ensures that we have defined the state of all
sites. The convention for uncovered sites is consistent if we think of
them as being covered by ``external'' dominos. See
Figure~\ref{fig:steepTiling} for an example.

The steepness condition implies that, sufficiently far away in the
north-east direction, all sites are empty and that conversely,
sufficiently far away in the south-west direction, all sites are
occupied. In particular, if we fix an integer $m \in
\{0,\ldots,2\ell\}$, we may canonically label the occupied sites along
the diagonal $y=x-m$ by \emph{positive} integers, starting from the
``highest'' one. Their abcissae form a strictly decreasing sequence
$(x_{m;n})_{n \geq 1}$ of half-integers such that $x_{m;n}+n$ is
eventually constant. Conversely, labelling the empty sites along the
same diagonal starting from the ``lowest'' one, their abcissae form a
strictly increasing sequence $(x'_{m;n})_{n \geq 1}$ such that
$x'_{m;n}-n$ is eventually constant. Actually, since the two sequences
span two disjoint sets whose union is $\mathbb{Z}+\frac{1}{2}$, there
exists an integer $c_m$ (the ``charge'') such that
\begin{equation}
  c_m = \lim_{n \to \infty} \left( x_{m;n}+n-\frac{1}{2} \right)
  = \lim_{n \to \infty} \left( x'_{m;n}-n+\frac{1}{2} \right)
    \label{eq:xxplimrel}.
\end{equation}

By examining the rules for constructing the particle configuration, we
readily see that, for all $k \in \{1,\ldots,\ell\}$ and $n \geq 1$,
\begin{equation}
  \label{eq:xcond}
  x_{2k;n} - x_{2k-1;n} \in \{ 0,1 \}
\end{equation}
(0 corresponds to a west-going domino, 1 to a south-going one) and
\begin{equation}
  \label{eq:xpcond}
  x'_{2k-1;n} - x'_{2k-2;n} \in \{ 0,1 \}
\end{equation}
(0 corresponds to an east-going domino, 1 to a north-going one).
Proposition~\ref{prop:periprop} implies that these quantities are
eventually constant as $n \to \infty$, and by \eqref{eq:xxplimrel} we
deduce that
\begin{equation}
  \label{eq:xcondlim}
  c_{2k} - c_{2k-1} =
  \begin{cases}
    0 & \text{if $w_{2k}=+$,} \\
    1 & \text{if $w_{2k}=-$,}
  \end{cases}
\end{equation}
and
\begin{equation}
  \label{eq:xpcondlim}
  c_{2k-1} - c_{2k-2} =
  \begin{cases}
    1 & \text{if $w_{2k-1}=+$,} \\
    0 & \text{if $w_{2k-1}=-$.}
  \end{cases}
\end{equation}
Note finally that the shape of the tiled region is entirely coded by
the states (empty or occupied) of the sites along the diagonals $y=x$
and $y=x-2\ell$. In particular, the tiling is pure (as defined in
Section~\ref{sec:boundary}) if and only if $x_{0;n}=x_{0;n+1}+1$ and
$x_{2\ell;n}=x_{2\ell;n+1}+1$ for all $n \geq 1$.

\subsection{Sequences of interlaced partitions}
\label{sec:interpar}

The particles along a diagonal $y=x-m$ form a so-called ``Maya
diagram''~\cite{MJD}, which classically codes an integer partition
$\lambda^{(m)}$ via
\begin{equation}
  \label{eq:lambdadef}
  \lambda^{(m)}_n = x_{m;n} + n - \frac{1}{2} - c_m
\end{equation}
(indeed the sequence $(\lambda^{(m)}_n)_{n \geq 1}$ thus defined 
is clearly a nonincreasing sequence of integers which vanishes eventually, 
i.e.\ an integer partition). Empty
sites code for the conjugate partition via
\begin{equation}
  \label{eq:lambdapdef}
  (\lambda^{(m)})'_n = - x'_{m;n} + n - \frac{1}{2} + c_m.
\end{equation}

As a straightforward consequence of the relations \eqref{eq:xcond},
\eqref{eq:xpcond}, \eqref{eq:xcondlim} and \eqref{eq:xpcondlim}, we
find that, for all $k \in \{1,\ldots,\ell\}$ and $n \geq 1$,
\begin{equation}
  \label{eq:vertstrip}
  \lambda^{(2k)}_n - \lambda^{(2k-1)}_n \in
  \begin{cases}
      \{ 0, 1 \} & \text{if $w_{2k}=+$,} \\
      \{ -1, 0 \} & \text{if $w_{2k}=-$,}
    \end{cases}
  \end{equation}
and
\begin{equation}
  \label{eq:horizstrip}
  (\lambda^{(2k-1)})'_n - (\lambda^{(2k-2)})'_n \in
  \begin{cases}
    \{ 0, 1 \} & \text{if $w_{2k-1}=+$,} \\
    \{ -1, 0 \} & \text{if $w_{2k-1}=-$.}
  \end{cases}
\end{equation}

Let us now recall some classical terminology about integer partitions
\cite{Stanley}.  Two partitions $\lambda$ and $\mu$ form a \emph{skew
  shape} $\lambda/\mu$ if the Young diagram of $\lambda$ contains that
of $\mu$, the remaining cells forming the \emph{skew diagram} of
$\lambda/\mu$. This skew diagram is called a \emph{horizontal (resp.\
  vertical) strip} if no two of its cells are in the same column
(resp.\ row), in that case we write $\lambda \prec \mu$ or $\mu \succ
\lambda$ (resp.\ $\lambda \prec' \mu$ or $\mu \succ' \lambda$), and we
say colloquially that $\lambda$ and $\mu$ are \emph{interlaced} (be it
horizontally or vertically).

We then immediately deduce from \eqref{eq:vertstrip} and
\eqref{eq:horizstrip} that the $\lambda^{(m)}$'s form a sequence of
interlaced partitions. More precisely, the Young diagrams of
$\lambda^{(2k)}$ and $\lambda^{(2k-1)}$ differ by a vertical strip
(which is either added or removed depending on the sign $w_{2k}$),
while those of $\lambda^{(2k-1)}$ and $\lambda^{(2k-2)}$ differ by a
horizontal strip (added or removed depending on $w_{2k-1}$). At this
stage, we should mention that the coding is not bijective since two
tilings differing by a translation along the direction $(1,1)$ yield
the same sequence of partitions. This is because the right hand sides
of \eqref{eq:lambdadef} and \eqref{eq:lambdapdef} are invariant if we
shift all particle positions by a constant (recall the definition
\eqref{eq:xxplimrel} of $c_m$). We say that a steep tiling is
\emph{centered} if it has $c_0=0$. Any steep tiling differs from a
centered one by a (unique) translation. We arrive at the following:

\begin{prop}[Fundamental bijection]
  \label{prop:tilseqbij}
  Given a word $w \in \{ +,- \}^{2\ell}$, the above construction
  defines a bijection between the set of centered steep tilings with
  asymptotic data $w$ and the set of sequences of partitions
  $(\lambda^{(0)},\ldots,\lambda^{(2\ell)})$ such that, for all $k \in
  \{1,\ldots,\ell\}$,
  \begin{itemize}
  \item $\lambda^{(2k-2)} \prec \lambda^{(2k-1)}$ if $w_{2k-1}=+$, and
    $\lambda^{(2k-2)} \succ \lambda^{(2k-1)}$ if $w_{2k-1}=-$,
  \item $\lambda^{(2k-1)} \prec' \lambda^{(2k)}$ if $w_{2k}=+$, and
    $\lambda^{(2k-1)} \succ' \lambda^{(2k)}$ if $w_{2k}=-$.
  \end{itemize}
  Furthermore, the bijection has the following properties.
  \begin{itemize}
  \item[A.] The precise shape of the tiled region is determined by
    the initial and final partitions $\lambda^{(0)}$ and
    $\lambda^{(2\ell)}$. In particular, the tiling is pure if and only
    if $\lambda^{(0)}=\lambda^{(2\ell)}=\emptyset$.
  \item[B.] For $m=1,\ldots,2\ell$, the absolute value of
    $|\lambda^{(m)}|-|\lambda^{(m-1)}|$ counts the number of dominos
    whose centers are on the line $y=x-m+1/2$ and whose orientations
    are opposite to the asymptotic one, see Table~\ref{tab:orient}.
  \item[C.] For $m=0,\ldots,2\ell$, $|\lambda^{(m)}|$ counts the
    number of flips centered on the $m$-th diagonal in any minimal
    sequence of flips between the tiling at hand and the minimal
    tiling $T_{\mathrm{min}}^w$ corresponding to the sequence
    $(\emptyset,\emptyset,\ldots,\emptyset)$.
  \end{itemize}
\end{prop}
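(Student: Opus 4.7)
The plan is to first verify the forward map, then construct the inverse, and finally establish the three listed properties.

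The forward direction is essentially assembled from Sections~\ref{sec:particles}--\ref{sec:interpar}: \eqref{eq:lambdadef}--\eqref{eq:lambdapdef} associate partitions $\lambda^{(m)}$ to the particles along each diagonal, the conditions \eqref{eq:vertstrip}--\eqref{eq:horizstrip} translate into the prescribed horizontal/vertical strip relations, and the charges $c_m$ are determined from $c_0=0$ by accumulating the increments in \eqref{eq:xcondlim}--\eqref{eq:xpcondlim}. For the inverse, given a sequence of interlaced partitions I would: (1)~compute the $c_m$ from $w$; (2)~recover the particle positions $x_{m;n}$ by inverting \eqref{eq:lambdadef}; (3)~on each strip between diagonals $m-1$ and $m$, read off the $0/1$-valued differences \eqref{eq:xcond}--\eqref{eq:xpcond} to decide, for each $n$, which of the two possible dominos (e.g.\ west-going vs.\ south-going between $2k-1$ and $2k$) occupies that slot. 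Since the interlacing forces these differences to be eventually constant with the value dictated by $w_m$, the reconstructed tiling is steep with asymptotic data $w$; a short case analysis on each pair of adjacent diagonals shows the dominos have disjoint interiors and their union satisfies~\eqref{eq:Rconst}. This inverse is clearly two-sided, giving the bijection.

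Property~A is immediate once one observes that in \eqref{eq:Rconst} only the unit squares centered on $y=x$ and $y=x-2\ell$ are in an unspecified state, and their occupation is by definition encoded by the particle positions on the diagonals $m=0$ and $m=2\ell$, hence by $\lambda^{(0)}$ and $\lambda^{(2\ell)}$ given the determined charges; purity corresponds to no gap between occupied and empty positions on each boundary, which is equivalent to both partitions being empty. For Property~B, each domino strictly between diagonals $m-1$ and $m$ contributes $0$ to the corresponding entry in \eqref{eq:xcond} or \eqref{eq:xpcond} if its orientation is the ``asymptotic'' one (matching $w_m$) and $1$ otherwise; summing these contributions over $n$ counts the ``opposite'' dominos and at the same time gives the size of the skew shape between $\lambda^{(m-1)}$ and $\lambda^{(m)}$, which is $\bigl||\lambda^{(m)}|-|\lambda^{(m-1)}|\bigr|$.

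Property~C is the main obstacle and requires a careful analysis of flips. The key local claim is that a flip centered on the $m$-th diagonal corresponds, on the partition side, to adding or removing a single corner cell in $\lambda^{(m)}$, while leaving all other $\lambda^{(m')}$ unchanged. This is a purely local check: a $2\times 2$ block centered on $y=x-m$ involves only particles on the three diagonals $m-1,m,m+1$, and swapping horizontal for vertical dominos shifts exactly one particle on diagonal $m$ by one unit, which adds or removes a single cell of $\lambda^{(m)}$; the interlacing with $\lambda^{(m-1)}$ and $\lambda^{(m+1)}$ is automatically preserved because the flip can only occur when the three local partitions admit such a corner move. Matching this local analysis against the ascendent/descendent classification of Section~\ref{sec:boundary} shows that ascendent flips strictly increase $|\lambda^{(m)}|$ by one and descendent flips strictly decrease it by one. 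Consequently $\sum_m |\lambda^{(m)}|$ is monotone along any sequence of flips of a fixed type, the minimal tiling $T_{\mathrm{min}}^w$ is the one with all $\lambda^{(m)}=\emptyset$ (it admits no descendent flip), and any sequence of ascendent flips from $T_{\mathrm{min}}^w$ to the current tiling must contain exactly $|\lambda^{(m)}|$ flips centered on the $m$-th diagonal and thus $\sum_m|\lambda^{(m)}|$ flips in total; this count is minimal because each flip contributes $\pm 1$ to that sum, proving both the minimality statement in Section~\ref{sec:boundary} and assertion~C.
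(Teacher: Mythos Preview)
Your treatment of bijectivity and of Properties~A and~B matches the paper's own argument essentially verbatim: exhibit the inverse by rebuilding $c_m$ from $w$, then particle positions from~\eqref{eq:lambdadef}, then dominos from~\eqref{eq:xcond}--\eqref{eq:xpcond}; Properties~A and~B are then local checks.

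For Property~C you take a genuinely different route. The paper defers to Section~\ref{sec:hfun} and invokes the Thurston/EKLP height-function machinery: one shows that the minimal number of flips equals $\sum_{(x,y)}|H(x,y)-H^w_{\min}(x,y)|/4$, which is then identified with $\sum_m|\lambda^{(m)}|$ via the Russian representation. Your approach bypasses height functions entirely and argues directly on partitions, observing that a flip on the $m$-th diagonal adds or removes one cell of $\lambda^{(m)}$. This is cleaner in spirit and would be a nice alternative.

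However, there is a real gap. Your lower bound (any flip sequence has length at least $\sum_m|\lambda^{(m)}|$, since each flip changes the sum by $\pm 1$) is fine, and your observation that any minimal sequence must therefore be purely ascendent, hence contain exactly $|\lambda^{(m)}|$ flips on diagonal $m$, is also fine. What is missing is the \emph{upper bound}: you never prove that a sequence of ascendent flips from $T^w_{\min}$ to the given tiling actually \emph{exists}. Equivalently, you have not shown that every non-minimal tiling admits at least one descendent flip. This is not automatic: a given $\lambda^{(m)}$ may well have a removable corner whose removal violates the interlacing with $\lambda^{(m-1)}$ or $\lambda^{(m+1)}$, so one must argue that \emph{some} diagonal always works. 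In the paper this is exactly what the height-function argument supplies (a local maximum of $H-H^w_{\min}$ is always flippable); an alternative direct argument, of the kind you seem to want, appears later in Section~\ref{sec:extpure} for the extended model and requires a nontrivial extremal choice plus case analysis. Without one of these, your proof of Property~C is incomplete.
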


\begin{table}[htpb]
  \begin{center}
    \begin{tabular}{|c|c|c|}
      \hline
      Parity of $m$ & $w_m$ & Orientation (type) \\
      \hline
      odd & $+$ & vertical (east-going)\\
      odd & $-$ & horizontal (north-going)\\
      even & $+$ & horizontal (south-going)\\
      even & $-$ & vertical (west-going)\\
      \hline
    \end{tabular}
    \bigskip
  \end{center}
  \caption{The absolute value of the difference between the sizes of $\lambda^{(m-1)}$ and of $\lambda^{(m)}$ is equal to the number of dominos whose centers are on the $y=x-m+1/2$ and have the above orientations and types.}
  \label{tab:orient}
\end{table}

\begin{proof}
  To prove the bijectivity of the construction, we simply exhibit the
  inverse mapping, and leave the reader check the details. From
  \eqref{eq:xcondlim} and \eqref{eq:xpcondlim}, we may recover the
  value of $c_m$ for all $m \in \{1,\ldots,2\ell\}$ starting with the
  data of $c_0=0$ and $w$. By \eqref{eq:lambdadef} and
  \eqref{eq:lambdapdef}, the sequence of partitions then determines
  all positions of occupied and empty sites in the particle
  configuration. Finally, the domino positions may be read off
  \eqref{eq:xcond} and \eqref{eq:xpcond}.

  The properties A and B are easy to check. The property C results
  from the discussion of height function of the forthcoming
  subsection.
\end{proof}

\begin{ex}
  The steep tiling of Figure~\ref{fig:steepTiling} corresponds to the
  sequence
  \begin{gather*}
      \lambda^{(0)} = \lambda^{(1)}= \lambda^{(2)}=
      \lambda^{(3)}=\lambda^{(4)}=\lambda^{(5)}=(1,1), \\
      \lambda^{(6)}=\lambda^{(7)}=\lambda^{(8)}=\emptyset, \quad
      \lambda^{(9)}=(1), \quad \lambda^{(10)}=(2,1).
  \end{gather*}
 For the convenience of the reader, we provide in Figure~\ref{fig:figureAddedInProof} a rotated version of the particle configuration of Figure~\ref{fig:steepTiling}, on which the Maya diagrams of the partitions and their interlacing are easier to visualize.
\end{ex}
\begin{figure}[h]
  \centering
  \includegraphics[width=0.8\textwidth]{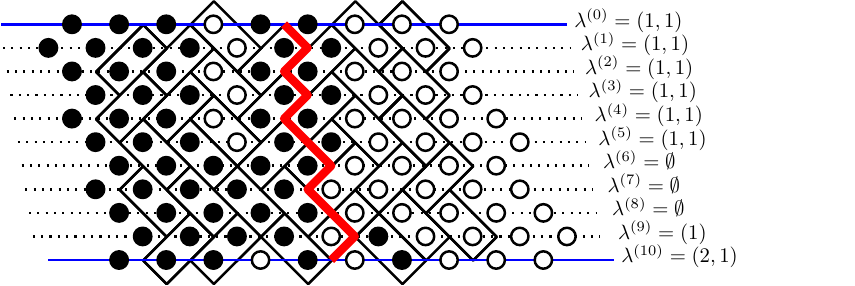}
  \caption{
The particle configuration of Figure~\ref{fig:steepTiling} after a 45$^\circ$ rotation. On each line one reads the Maya diagram of an integer partition. The thick (red) path indicates the frontier between empty and occupied sites for the minimal configuration of same asymptotic data, corresponding to the case when all partitions are empty (see also Figure~\ref{fig:minimalTiling}).
}\label{fig:figureAddedInProof}
\end{figure}

\subsection{Height functions}
\label{sec:hfun}

\begin{figure}[htpb]
  \centering
  \includegraphics[width=0.8\textwidth]{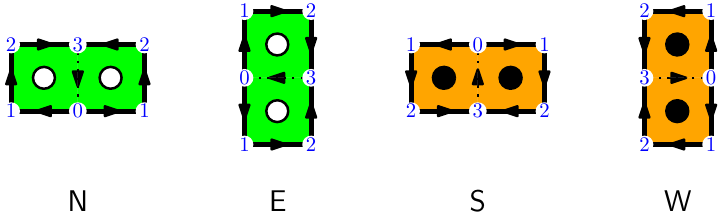}
  \caption{Variation of the height function around each type of domino
    (numbers represent the height difference with respect to the
    ``lowest'' vertex around the domino). The local particle
    configuration is displayed.}
  \label{fig:heightFunction}
\end{figure}

To complete the picture, let us discuss the height function associated
to a steep tiling of the oblique strip. Let $V$ be the set of integer
points in the oblique strip, namely
\begin{equation}
  \label{eq:Vdef}
  V = \{ (x,y) \in \mathbb{Z}^2, 0 \leq x-y \leq 2\ell \}.
\end{equation}
Consider the oriented graph with vertex set $V$ where each vertex
$(x,y) \in V$ such that $x-y$ is odd has exactly two outgoing edges
pointing to $(x \pm 1,y)$, and exactly two incoming edges originating
from $(x,y \pm 1)$. Following Thurston~\cite{Thurston:tiling}, we
define a \emph{height function} as a function $H: V \to \mathbb{Z}$
such that, for any oriented edge $(x,y) \to (x',y')$, we have
\begin{equation}
  \label{eq:heidef}
  H(x',y') - H(x,y) \in \{ 1,-3 \}.
\end{equation}
It is easily seen that height functions (considered up to an additive
constant) are in bijection with domino tilings: each edge with a
height difference of $-3$ corresponds to a domino, see
Figure~\ref{fig:heightFunction}. Note that boundary dominos
(i.e.~dominos overlapping either the line $y=x$ or the line
$y=x-2\ell$) have one of their corners not in $V$, but no information
is lost in restricting the height function to $V$.

\begin{figure}[htpb]
  \centering
  \includegraphics[width=.5\textwidth]{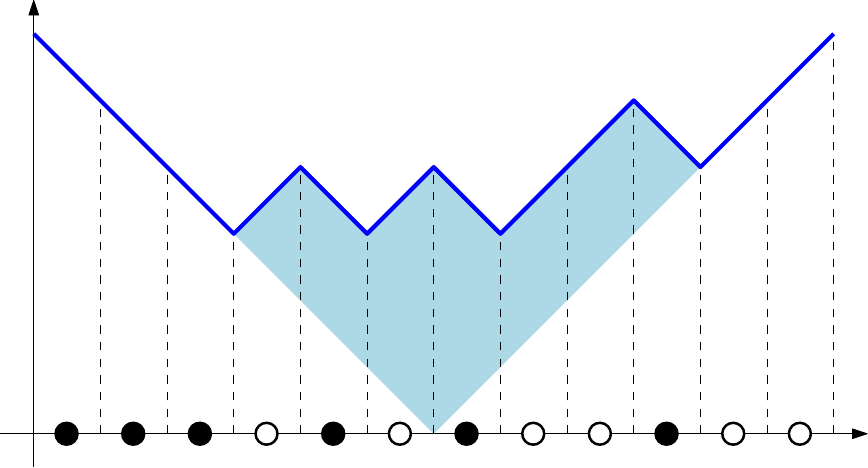}
  \caption{Russian representation of the partition $(4,2,1)$, and the
    associated Maya diagram.}
  \label{fig:russian}
\end{figure}

\begin{figure}[htpb]
  \centering
  \includegraphics[width=0.5\textwidth]{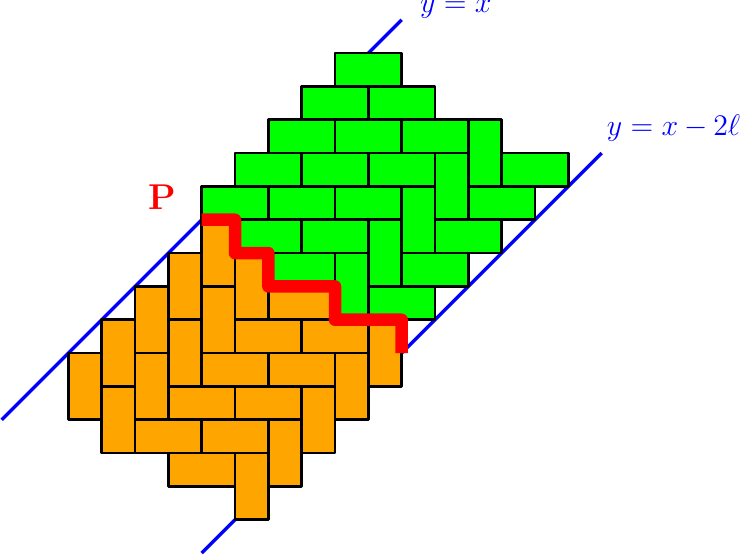}
  \caption{The minimal tiling $T_\mathrm{min}^w$ corresponding to the asymptotic data
$w=(+++++---++)$. The minimal tiling is separated into two regions,
one filled with only  south- or west- going (orange) dominos, and the other
one filled with only  north- or east- going (green) dominos, corresponding respectively to occupied
and unoccupied sites in the particle configuration.
\label{fig:minimalTiling}}
\end{figure}

Interestingly, the height function is closely related to the particle
configuration: as is apparent on Figure~\ref{fig:heightFunction}, the
height difference between the vertex at the top right and that at the
bottom left of an empty (resp.~occupied) site is $+2$
(resp.~$-2$). Hence, for any $m \in \{0,\ldots,2\ell\}$, the graph of the
function $x \mapsto H(x,x-m)$ coincides (up to scaling and
translation) with the ``Russian'' representation of the partition
$\lambda^{(m)}$, see Figure~\ref{fig:russian}. By
translating Proposition~\ref{prop:periprop} in the language of height
functions we find that, for $|x|$ large enough, we have
\begin{equation}
  \label{eq:Hasymp}
  H(x,x-m) = 2 |x-c_m| + h_m
\end{equation}
where $c_m$ is defined as in \eqref{eq:xxplimrel} and $h_m-h_{m+1}=1$
if $w_m=+$, $-1$ if $w_m=-$ (we may fix $h_0=0$ since the height
function is defined modulo an additive constant). Observe that the
height function grows eventually at the maximal possible slope, which
is why we call the corresponding tiling ``steep''. For fixed
asymptotic data, the lowest possible height function
$H_{\mathrm{min}}^w$ is achieved when \eqref{eq:Hasymp} holds for all
$x$, which corresponds to having $\lambda^{(m)}=\emptyset$ for all
$m$. The corresponding centered tiling $T_{\mathrm{min}}^w$ is the
\emph{minimal} tiling of asymptotic data $w$, see
Figure~\ref{fig:minimalTiling}.

\begin{rem}\label{rem:path}
  In the minimal tiling $T_{\mathrm{min}}^w$, both the region covered
  by north- or east-going dominos and that covered by east- or
  south-going ones are connected, and the frontier between them is
  made of a finite path $P$, see Figure~\ref{fig:minimalTiling}. By
  \eqref{eq:Hasymp}, $P$ intersects the line $x=y-m$ at the point
  \begin{equation}
    P_m = (c_m,c_m-m) = \left(
      \frac{m-\sum_{j=1}^m w_j(-1)^j}{2},
      \frac{-m-\sum_{j=1}^m w_j(-1)^j}{2}
    \right).
  \end{equation}
  In particular, $P_0$ and $P_{2\ell}$ mark the limit between
  uncovered and covered boundary squares in $T_{\mathrm{min}}^w$, thus
  in any centered pure steep tiling.
\end{rem}

\begin{figure}[htpb]
  \centering
  \includegraphics[width=\textwidth]{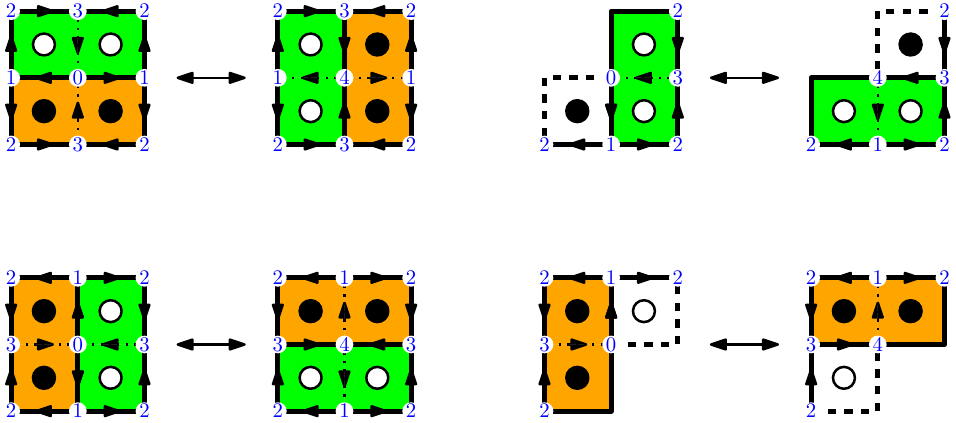}
  \caption{The effect of flips on the height function and on the
    particle configuration (left: bulk case, right: boundary
    case). The flips are ascendent from left to right and descendent
    from right to left.}
  \label{fig:flips}
\end{figure}

As pointed out by Elkies \emph{et al.}\ \cite{EKLP1992} in the context
of the Aztec diamond, and understood by Propp \cite{Propp1993} in a
much broader context, height functions play a crucial role in the
study of flips. Recall that here a flip consists in replacing a pair
of horizontal dominos forming a $2 \times 2$ block by a pair of
vertical dominos, or vice-versa. In the context of tilings of the
oblique strip, we also consider ``boundary flips'' where we rotate a
boundary domino adjacent to an uncovered square (thus changing the
shape of the tiled region). As is apparent on Figure~\ref{fig:flips},
a flip modifies the height function precisely at one vertex, where the
height is increased or decreased by 4 (this is the smallest possible
change, since the difference between two height functions is constant
modulo 4). Given a tiling $T$ with height function $H$, it may be
shown along the same lines as in~\cite{EKLP1992} that the minimal
number of flips needed to pass from $T_{\mathrm{min}}^w$ to $T$ is
equal to
\begin{equation}
  r_w(T) = \sum_{(x,y) \in V} |H(x,y)-H_{\mathrm{min}}^w(x,y)|/4.
  \label{eq:rt}
\end{equation}
By \eqref{eq:Hasymp}, this number is finite if and only if $T$ is
centered and has asymptotic data~$w$, and we then have
\begin{equation}
  \label{eq:dfliptmin}
  r_w(T) = \sum_{m=0}^{2\ell} |\lambda^{(m)}|
\end{equation}
where $(\lambda^{(0)},\ldots,\lambda^{(2\ell)})$ is the sequence of
interlaced partitions associated to $T$, and where $|\lambda^{(m)}|$
denotes the size of the partition $\lambda^{(m)}$.  By an easy
refinement of \eqref{eq:rt}, we find that $|\lambda^{(m)}|$ precisely
counts the number of flips made on the diagonal $x=y-m$ (which are
boundary flips for $m=0$ or $2\ell$) in any shortest sequence of flips
from $T_{\mathrm{min}}^w$ to $T$. This establishes the property~C in
Proposition~\ref{prop:tilseqbij}.

\section{Some particular cases}
\label{sec:specialcases}

We now discuss a few particularly interesting cases of the bijection.

\subsection{Aztec diamond}
\label{sec:aztec}

Domino tilings of the Aztec diamond of size $\ell$ are obtained by
considering the word $w=(+-)^\ell=+-+-\cdots+-$ ($\ell$ times):

\begin{prop} \label{prop:aztecc}
  There is a one-to-one correspondence between domino tilings of the
  Aztec diamond of size $\ell$ and sequences of partitions
  $(\lambda^{(0)},\ldots,\lambda^{(2\ell)})$ such that
  \begin{equation}
    \label{eq:azcase}
    \emptyset = \lambda^{(0)} \prec \lambda^{(1)} \succ' \lambda^{(2)} \prec
    \lambda^{(3)} \succ' \cdots \prec \lambda^{(2\ell-1)} \succ'
    \lambda^{(2\ell)} = \emptyset.
  \end{equation}
\end{prop}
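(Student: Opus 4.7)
The plan is to apply the fundamental bijection of Proposition~\ref{prop:tilseqbij} with asymptotic data $w=(+-)^\ell$, and then to identify centered pure steep tilings with this asymptotic data with domino tilings of the Aztec diamond of size $\ell$.

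First I would specialize the interlacing conditions of Proposition~\ref{prop:tilseqbij} to $w=(+-)^\ell$. Here $w_{2k-1}=+$ and $w_{2k}=-$ for every $k\in\{1,\ldots,\ell\}$, so for each $k$ the odd-indexed letter forces $\lambda^{(2k-2)} \prec \lambda^{(2k-1)}$, while the even-indexed letter forces $\lambda^{(2k-1)} \succ' \lambda^{(2k)}$. The purity of the tiling, by item~A of Proposition~\ref{prop:tilseqbij}, is exactly equivalent to $\lambda^{(0)}=\lambda^{(2\ell)}=\emptyset$. Chaining these conditions yields precisely the interlacement \eqref{eq:azcase}.

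It then remains to match centered pure steep tilings of asymptotic data $(+-)^\ell$ with domino tilings of the Aztec diamond of size $\ell$. I would first describe the minimal tiling $T^{w}_{\mathrm{min}}$ explicitly using Remark~\ref{rem:path}: since $w_j(-1)^j=-1$ for every $j$, one gets $c_m=m$ and $P_m=(m,0)$, so the frontier $P$ between the two monochromatic regions of $T^{w}_{\mathrm{min}}$ is the horizontal segment from $(0,0)$ to $(2\ell,0)$, and the two endpoints $P_0$ and $P_{2\ell}$ determine the unique pair of boundary squares separating covered from uncovered ones along each rim. Combined with the periodic asymptotic patterns imposed by the steepness condition (north- and east-going dominos above $P$, south- and west-going dominos below), this pins down the tiling everywhere outside a bounded region $D$ of the oblique strip.

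The final step is the geometric observation that, after rotating the oblique strip by $-45^\circ$ so that the line $y=x-\ell$ becomes horizontal, the region $D$ becomes (up to translation) exactly the axis-aligned Aztec diamond of size $\ell$, i.e.\ the set of unit squares lying in $|x|+|y|\le\ell+1$. Once this identification is made, a pure centered steep tiling of asymptotic data $(+-)^\ell$ is the data of an arbitrary domino tiling of $D$, extended deterministically to the rest of the oblique strip by the frozen pattern; this is the desired bijection. The only nontrivial point is the coordinate matching between the two descriptions of the Aztec diamond, which I expect to be the main obstacle, but it is routine once the minimal tiling has been drawn and compared with the standard brick-wall picture of the Aztec diamond.
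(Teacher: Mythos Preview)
Your forward direction is fine and matches the paper: extend an Aztec diamond tiling to a centered pure steep tiling with $w=(+-)^\ell$, then read off the interlacing via Proposition~\ref{prop:tilseqbij}. The gap is in the converse. You assert that the boundary data $P_0,P_{2\ell}$ ``combined with the periodic asymptotic patterns imposed by the steepness condition'' pin down the tiling outside a bounded region $D$, but the steepness condition only says the tiling is \emph{eventually} periodic, with an unspecified threshold $A$; and the description ``north- and east-going above $P$, south- and west-going below $P$'' is a property of the \emph{minimal} tiling, not something you have established for an arbitrary pure steep tiling. Nothing you have written rules out a pure steep tiling that differs from $T^w_{\min}$ outside your region $D$. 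This is precisely the point the paper singles out (``Conversely, we need to show that a steep tiling obtained from a sequence \dots\ is `frozen' outside the Aztec diamond'') and proves by an argument you are missing: from $\emptyset\prec\lambda^{(1)}\succ'\lambda^{(2)}\prec\cdots$ one deduces inductively that $\lambda^{(2k-1)},\lambda^{(2k)}$ have at most $k$ rows, and by reading the chain backwards that $\lambda^{(2k-1)},\lambda^{(2k-2)}$ have at most $\ell+1-k$ columns; these bounds translate into all sites above $x+y=2\ell$ being empty and all below $x+y=0$ being occupied, hence the height function equals $H^w_{\min}$ there.

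Your approach can be completed geometrically instead (the staircase-shaped complement of the Aztec diamond inside the fixed tiled region $R$ has a forced brick-wall tiling), but that argument must actually be given; it is the substance of the proof, not the coordinate bookkeeping you flag as ``the main obstacle.'' Also, drop the $-45^\circ$ rotation: the Aztec diamond $|x|+|y|\le\ell+1$ already sits in the same orientation as the oblique strip, and rotating would turn dominos into non-lattice rectangles. A translation (as in the paper's Figure~\ref{fig:aztecDiamond}) suffices.
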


This fact was also observed by Dan Betea (personal communication).

\begin{proof}
  \begin{figure}[htpb]
    \centering
    \includegraphics[width=.5\textwidth]{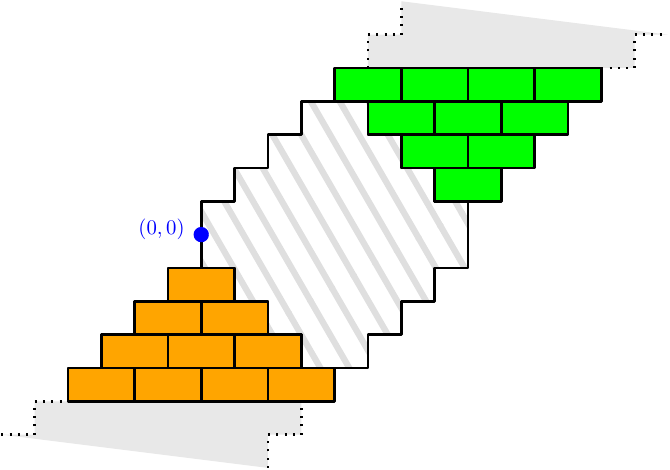}
    \caption{A tiling of the Aztec diamond of size $\ell$ (here
      $\ell=4$) may be completed into a steep tiling with asymptotic
      data $(+-)^\ell$.}
    \label{fig:aztecDiamond}
  \end{figure}

  By translating a tiling of the Aztec diamond, and completing it in
  the way displayed on Figure~\ref{fig:aztecDiamond}, we obtain a
  centered pure steep tiling with asymptotic data $w=(+-)^\ell$.  By
  Proposition~\ref{prop:tilseqbij}, it corresponds to a sequence of
  partitions satisfying \eqref{eq:azcase} with
  $\lambda^{(0)}=\lambda^{(2\ell)}=\emptyset$.

  Conversely, we need to show that a steep tiling obtained from a
  sequence of partitions satisfying \eqref{eq:azcase} is ``frozen''
  outside the Aztec diamond. While this may be checked directly by
  reasoning on dominos, let us here exploit the correspondence with
  partitions: observe that (the Young diagram of) $\lambda^{(1)}$ has
  at most one row (since $\lambda^{(1)} \succ \emptyset$), and so does
  $\lambda^{(2)}$ (since $\lambda^{(1)} \succ' \lambda^{(2)}$). Hence,
  $\lambda^{(3)}$ and $\lambda^{(4)}$ have at most two rows (since
  $\lambda^{(2)} \prec \lambda^{(3)} \succ' \lambda^{(4)}$), and so on
  we find by induction that, for all $k \in \{1,\ldots,\ell\}$,
  $\lambda^{(2k-1)}$ and $\lambda^{(2k)}$ have at most $k$
  rows. Similarly, by reasoning on \eqref{eq:azcase} backwards, we
  find that $\lambda^{(2k-1)}$ and $\lambda^{(2k-2)}$ have at most
  $\ell+1-k$ columns.
  Translating these constraints in the particle language, we find that
  all sites strictly above the line $x+y=2\ell$ are empty, and all
  sites strictly below the line $x+y=0$ are occupied. Thus, the height
  function coincides with $H_{\mathrm{min}}^w$ for $x+y \geq 2\ell$ or
  $x+y \leq 0$, so the tiling coincides in these regions with the
  minimal tiling $T_{\mathrm{min}}^w$, which here consists only of
  horizontal dominos, as wanted.
\end{proof}

\begin{rem}
  \label{rem:stanley}
  Using the property~B in Proposition~\ref{prop:tilseqbij}, it is not
  difficult to see that the multivariate weighting scheme of
  Theorem~\ref{thm:mainWithStanleyWeights} is equivalent to the
  so-called Stanley's weighing scheme for the Aztec diamond
  \cite{Propp:talk,BYYangPhD}. Indeed, attaching a weight $x_i$
  ($i=1,\ldots,2\ell-1$) to each flip centered on the $i$-th diagonal
  is tantamount to attaching a weight $z_j$ ($j=1,\ldots,2\ell$) to
  each vertical domino whose center is on the line $y=x-j+1/2$, upon
  imposing the relation
  \begin{equation}
    x_{2k-1}=z_{2k-1}z_{2k}, \qquad x_{2k}=\frac{1}{z_{2k}z_{2k+1}}.
  \end{equation}
  Thus, Theorem~\ref{thm:mainWithStanleyWeights} implies that
  \begin{equation}
    \label{eq:stanley}
    T_{(+-)^\ell} =  \prod_{\substack{1 \leq i<j \leq 2\ell \\ \text{$i$ odd, $j$ even}}} (1 + z_iz_j)
  \end{equation}
  which is equivalent to Stanley's formula (where weights for
  horizontal dominos can be set to $1$ without loss of generality).
\end{rem}

\subsection{Pyramid partitions}
\label{sec:pyramids}

They can be recovered by considering centered pure steep tilings with
asymptotic data $w=+^\ell -^\ell$ (that is, $+$ repeated $\ell$ times
then $-$ repeated $\ell$ times) and then letting $\ell\rightarrow
+\infty$. Let $\ell$ be a fixed odd integer and denote by $P_\ell$ the
set of pyramid partitions that we can obtain from the fundamental
partition given on Figure \ref{fig3} where the center of the brick on
the top is $(0,0)$ and where one can only take off bricks that lie
inside the strip $-\ell \le x-y\le \ell$ (note that, by our
conventions, the removal of a brick actually corresponds to an
ascendent domino flip).  It is straightforward to see that:
\begin{prop}[see also {\cite[Lemma~5.9]{Young:orbifolds}}]\label{pyra_ell}
  There is a one-to-one correspondence between pyramid partitions in
  $P_\ell$ and pure (centered steep) tilings with asymptotic data
  $w=(+^\ell -^\ell)$.  Equivalently, there is a bijection between
  pyramid partitions in $P_\ell$ and sequences of partitions
  $(\lambda^{(0)},\ldots,\lambda^{(2\ell)})$ such that
  \begin{equation}
    \label{eq:pyreven}
    \emptyset = \lambda^{(0)} \prec \lambda^{(1)} \prec' \lambda^{(2)} \prec
    \lambda^{(3)} \cdots \prec \lambda^{(\ell)}  \succ' \lambda^{(\ell+1)} \succ \cdots \succ \lambda^{(2\ell-1)} \succ'
    \lambda^{(2\ell)} = \emptyset.
  \end{equation}
\end{prop}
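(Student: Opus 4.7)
The plan splits into two parts. First I would settle the equivalence between pure centered steep tilings of asymptotic data $w = +^\ell -^\ell$ and sequences $(\lambda^{(0)},\ldots,\lambda^{(2\ell)})$ of interlaced partitions satisfying \eqref{eq:pyreven}. This is an immediate specialization of the Fundamental Bijection (Proposition \ref{prop:tilseqbij}) to this $w$: at positions $j \leq \ell$ all letters are $+$, forcing the addition of a horizontal strip when $j$ is odd and of a vertical strip when $j$ is even, while at positions $j > \ell$ all letters are $-$, reversing each strip-inclusion. Since $\ell$ is odd, the turning point falls between $\lambda^{(\ell)}$ and $\lambda^{(\ell+1)}$, related by a vertical strip (i.e.\ $\succ'$), which is exactly the shape of the chain \eqref{eq:pyreven}. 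Purity translates into the boundary condition $\lambda^{(0)} = \lambda^{(2\ell)} = \emptyset$ via property A of the proposition.

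The core of the argument is then to exhibit the bijection between $P_\ell$ and pure centered steep tilings with asymptotic data $w = +^\ell -^\ell$. I would exploit the view-from-above construction of Figure~\ref{fig3}: projecting the set of maximal bricks of any pyramid partition yields a domino tiling of $\mathbb{Z}^2$, and the removal of a single brick corresponds to a flip of the two dominos that cover the brick's two orthogonal supporters. Applied to the fundamental pyramid partition, whose unique maximal brick is centered at the origin, this projection yields a specific periodic tiling of the whole plane. The key geometric observation is that, upon intersecting this tiling with the oblique strip $0 \leq x - y \leq 2\ell$ (after the translation aligning the top brick with the upper boundary), one recovers exactly the minimal steep tiling $T_{\min}^w$ for $w = +^\ell -^\ell$: the oddness of $\ell$ is precisely what ensures that the two boundaries $y = x$ and $y = x - 2\ell$ cut the periodic pyramid pattern at matching edges, producing the asymptotic word $+^\ell -^\ell$ and forcing the bricks whose centers lie outside the strip $-\ell \leq x - y \leq \ell$ to project into the frozen regions of $T_{\min}^w$.

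Once the minimal configurations are matched on both sides, the bijection extends to all of $P_\ell$: each removal of a brick strictly inside the strip corresponds to an ascendent non-boundary flip of the associated steep tiling (as already noted in the parenthetical remark before the proposition), and conversely any such flip can be realized by removing a unique brick from the pyramid. Combined with the fact, recalled at the end of Section~\ref{sec:boundary}, that every pure centered steep tiling of asymptotic data $w$ can be reached from $T_{\min}^w$ by a sequence of ascendent non-boundary flips, this yields the claimed correspondence. I expect the main obstacle to lie in the coordinate bookkeeping of the second paragraph: one has to pin down the orientation convention so that the projection of the fundamental pyramid indeed produces the precise alternation of north/east and west/south going dominos prescribed by $w = +^\ell -^\ell$, and to verify that the 3D condition ``brick center inside $-\ell \leq x - y \leq \ell$'' translates exactly to the 2D condition ``flip centered on a diagonal $0 < k < 2\ell$''. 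Once these compatibilities are written out, everything else reduces to bookkeeping.
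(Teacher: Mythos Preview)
Your proposal is correct and follows the same line as the paper's argument. The paper's own proof is extremely terse—it simply says ``It is straightforward to see that'' before the statement, and afterwards only notes that the empty pyramid partition restricted to the oblique strip equals $T_{\min}^{+^\ell -^\ell}$ up to translation—so your write-up is essentially a fleshed-out version of exactly this reasoning: identify the minimal configurations on both sides, use that brick removals are ascendent non-boundary flips, and invoke Proposition~\ref{prop:tilseqbij} for the second equivalence.
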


\begin{figure}[htpb]
  \centering
  \includegraphics[width=0.5\textwidth]{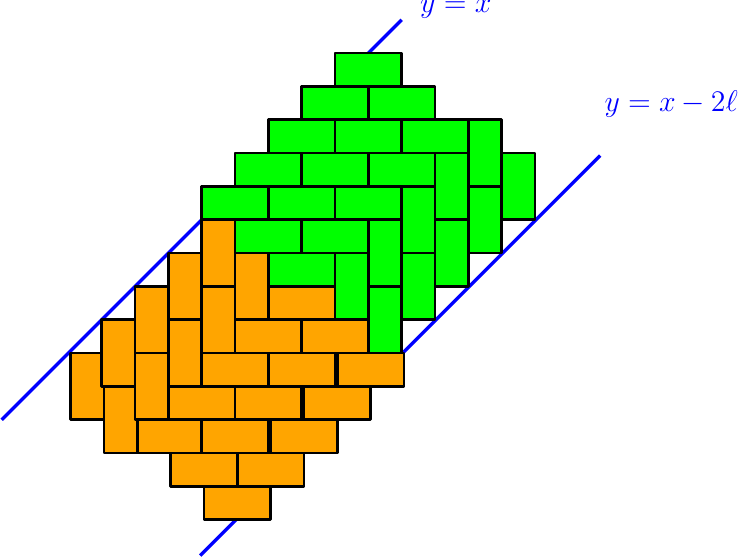}
  \caption{The minimal tiling $T_\mathrm{min}^w$ corresponding to the asymptotic data
$w=(+++++-----)$. 
\label{pyra}}
\end{figure}

Indeed, one can easily see that the empty pyramid partition restricted
to the oblique strip of width $2\ell$ is nothing but
$T_{\mathrm{min}}^{+^\ell -^\ell}$ translated by $(-\ell,\ell)$, see
Figure \ref{pyra}.  Theorem \ref{thm:main} gives
\begin{equation}
  T_{(+^\ell -^\ell)}(q)=\prod_{i=1}^{\ell}(1+q^{2i-1})^{2\min(i,\ell+1-i)-1}\prod_{i=1}^{\ell-1}\frac{1}{(1-q^{2i})^{2\min(i,\ell-i)}}.
\end{equation}
It is immediate to see that when $\ell\rightarrow +\infty$, we recover
the generating function of pyramid partitions \eqref{eq:pyrgf}. A
similar construction holds when $\ell$ is an even integer (the
``central block'' is rotated by $90^\circ$).

\subsection{Plane overpartitions}
\label{sec:overpart}

We now provide an example of non pure steep tilings. A \emph{plane
  overpartition} \cite{CSV} is a plane partition where in each row the
last occurrence of an integer can be overlined or not and in each
column the first occurrence of an integer can be overlined or not and
all the others are overlined. 
An example of a plane overpartition of shape $(4,4,2,1)$ is
\begin{equation}
  \begin{array}{llll}
    2 & 2 & \bar{2} & \bar{1} \\
    \bar{2} & 1 & 1 & \bar{1}\\
    \bar{2} & \bar{1} & &\\
    1 &  & & \\
  \end{array}
  \label{eq:overpart}
\end{equation}
It is easily seen that a plane overpartition of shape $\lambda$
containing integers at most $\ell$ is in bijection with a sequence of
interlaced partitions such that
\begin{equation}
  \label{eq:overseq}
  \emptyset=\lambda^{(0)} \prec \lambda^{(1)} \prec' \lambda^{(2)} \prec
  \lambda^{(3)} \prec' \lambda^{(4)} \prec \cdots \prec \lambda^{(2\ell-1)}
  \prec' \lambda^{(2\ell)} = \lambda.
\end{equation}
Indeed, for $i=1,\ldots,\ell$, the horizontal strip
$\lambda^{(2i-1)}/\lambda^{(2i-2)}$ (resp.\ the vertical strip
$\lambda^{(2i)}/\lambda^{(2i-1)}$) is formed by the non overlined
entries (resp.\ the overlined entries) equal to $\ell+1-i$.

\begin{figure}[htpb]
  \centering
  \includegraphics[width=0.3\textwidth]{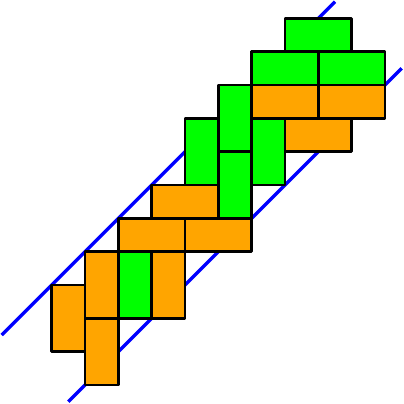}
  \caption{The steep tiling corresponding to the plane overpartition
    of Equation \eqref{eq:overpart}.}
\label{fig:overpart}
\end{figure}

By Proposition~\ref{prop:tilseqbij}, plane overpartitions are then in
bijection with some (non pure) steep tilings with asymptotic data
$+^{2\ell}$, see Figure~\ref{fig:overpart} for an example. These
tilings are essentially the same as those discussed in \cite[Section
4]{CSV}.

Note that, upon reversing and conjugating the sequence
\eqref{eq:overseq}, a plane overpartition may as well be coded by a
sequence
\begin{equation}
  \label{eq:overseqbis}
  \lambda'=\mu^{(0)} \succ \mu^{(1)} \succ' \mu^{(2)} \succ
  \mu^{(3)} \succ' \mu^{(4)} \succ \cdots \succ \mu^{(2\ell-1)}
  \succ' \mu^{(2\ell)} = \emptyset
\end{equation}
corresponding to a steep tiling with asymptotic data $-^{2\ell}$
(which is actually the previous tiling rotated by $180^\circ$).  We
then observe that pairs of plane overpartitions whose shape are
conjugate to one another are in bijection with pyramid partitions:
this is seen by concatenating their associated sequences of interlaced
partitions (upon using the convention \eqref{eq:overseq} for the first
plane overpartition and the convention \eqref{eq:overseqbis} for the
other one), or alternatively by ``assembling'' their associated domino
tilings into one another.

\begin{rem}
  As pointed out by Sunil Chhita, the case where $\lambda$ is a
  rectangular shape corresponds to domino tilings of the so-called
  double Aztec diamond \cite{AJvM14,ACJvM}. Compare for instance
  \cite[Figure 1]{AJvM14} with \cite[Figures 8 and 11]{CSV} (where one
  shall think of all outgoing red lines at the bottom being moved as
  much as possible to the right in the case where $\lambda$ is a
  rectangular shape).
\end{rem}

\section{Enumeration via the vertex operator formalism}
\label{sec:vertex}

\subsection{Pure steep tilings}
\label{sec:vertexpure}

The purpose of this section is to establish
Theorem~\ref{thm:mainWithStanleyWeights}, which implies
Theorem~\ref{thm:main} upon taking $x_i=q$ for all $i$. By
our general bijection (Proposition~\ref{prop:tilseqbij}), the
enumeration of elements of $\mathcal{T}_w$ is equivalent to the one of
interlaced sequences of partitions with empty boundary
conditions. This amounts to computing the partition function of a
\emph{Schur process}
\cite{OkounkovReshetikhin:schurProcess,Borodin:dynamics}, which is
easily done using the \emph{vertex operator formalism}, see e.g.\
\cite{Kac,Okounkov:wedge}.  Let us now briefly recall this formalism.

We work over the vector space of formal sums of partitions, with basis
$\{|\lambda\rangle, \lambda \in \Lambda\}$ and dual basis
$\{\langle\mu|, \mu \in \Lambda\}$, where $\Lambda$ is the set of all
integer partitions (here we use the convenient bra-ket notation).  We
consider the \emph{vertex operators} $\Gamma_+(t)$ and $\Gamma_-(t)$
defined by
\begin{equation}
  \begin{split}
    \Gamma_+(t) |\lambda\rangle &= \sum_{\mu:\ \mu\prec \lambda}
    t^{|\lambda|-|\mu|} |\mu\rangle, \\
    \Gamma_-(t) |\lambda\rangle &= \sum_{\mu:\ \mu\succ\lambda}
    t^{|\mu|-|\lambda|} |\mu\rangle,
  \end{split}
\end{equation}
where $t$ must be seen as a formal variable.
Following~\cite{Young:orbifolds}, we also introduce the conjugate vertex operators $\Gamma'_+(t)$ and $\Gamma'_-(t)$
defined by 
\begin{equation}
  \begin{split}
    \Gamma'_+(t) |\lambda\rangle &= \sum_{\mu:\ \mu\prec' \lambda}
    t^{|\lambda|-|\mu|} |\mu\rangle, \\
    \Gamma'_-(t) |\lambda\rangle &= \sum_{\mu:\ \mu\succ' \lambda}
    t^{|\mu|-|\lambda|} |\mu\rangle.
  \end{split}
\end{equation}
Note that $\Gamma_\pm'(t) = \omega \Gamma_\pm (t) \omega$ where
$\omega: |\lambda\rangle \longmapsto |\lambda'\rangle$ is the
conjugation of partitions.

\begin{lem}\label{lem:gfInTermsOfOperators}
  Fix a word $w\in\{+,-\}^{2\ell}$, and $\alpha,\beta$ two
  partitions. Let $T_{w,\alpha,\beta}\equiv
  T_{w,\alpha,\beta}(x_1,x_2,\dots,x_{2\ell})$ be the generating
  function of sequences $(\lambda^{(0)},\lambda^{(1)}, \dots,
  \lambda^{(2\ell)})$ of partitions that are interlaced as described
  in Proposition~\ref{prop:tilseqbij}, with $\lambda^{(0)}=\alpha$ and
  $\lambda^{(2\ell)}=\beta$, and where the exponent of the variable
  $x_i$ ($1\leq i\leq 2\ell$) records the size of the partition
  $\lambda^{(i)}$ minus that of $\alpha$.  Then $T_{w,\alpha,\beta}$
  is given by
  \begin{eqnarray}\label{eq:gfInTermsOfOperators}
    T_{w,\alpha,\beta} = \langle\alpha|
    \prod_{i=1}^\ell \Gamma_{w_{2i-1}}(y_{2i-1}^{w_{2i-1}})
    \Gamma'_{w_{2i}}(y_{2i}^{w_{2i}})
    |\beta\rangle
  \end{eqnarray}
  with $y_i=x_ix_{i+1}\dots x_{2\ell}$. Here and in the sequel,
  $y_i^{w_i}$ shall be understood as $y_i$ if $w_i=+$, and $1/y_i$ if
  $w_i=-$.
\end{lem}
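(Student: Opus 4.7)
The plan is to expand the right-hand side of \eqref{eq:gfInTermsOfOperators} by inserting resolutions of the identity $\sum_{\lambda} |\lambda\rangle\langle\lambda|$ between each pair of consecutive vertex operators. This turns the matrix element into a multiple sum over all sequences $(\lambda^{(0)},\lambda^{(1)},\ldots,\lambda^{(2\ell)})$ with $\lambda^{(0)}=\alpha$ and $\lambda^{(2\ell)}=\beta$, each term being a product of one-step matrix elements of the form $\langle \lambda^{(i-1)} | \Gamma_{w_i}(y_i^{w_i}) | \lambda^{(i)}\rangle$ (with a prime when $i$ is even). It then suffices to verify that (a)~the summation is effectively restricted to the interlaced sequences prescribed by Proposition~\ref{prop:tilseqbij}, and (b)~the weight carried by such a sequence equals the monomial $\prod_j x_j^{|\lambda^{(j)}|-|\alpha|}$.

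The interlacing conditions are read off directly from the definitions. For odd $i$ with $w_i=+$, the matrix element $\langle \mu | \Gamma_+(y_i) | \lambda \rangle$ vanishes unless $\mu \prec \lambda$, matching the requirement $\lambda^{(i-1)} \prec \lambda^{(i)}$; for $w_i=-$ we use $\Gamma_-$, which forces $\mu \succ \lambda$, again as required. The cases of even~$i$ are analogous with the primed operators giving vertical strips. In all four cases, the non-vanishing matrix element equals $y_i^{|\lambda^{(i)}|-|\lambda^{(i-1)}|}$: when $w_i=+$ the exponent is already non-negative and supplied directly by the definition of $\Gamma_\pm$ or $\Gamma'_\pm$; when $w_i=-$ the reciprocal argument $y_i^{-1}$ exactly compensates the sign flip between $|\mu|-|\lambda|$ and $|\lambda|-|\mu|$ appearing in the corresponding operator.

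The final step is to combine these weights. Multiplying over $i$ gives a total contribution
\[
\prod_{i=1}^{2\ell} y_i^{|\lambda^{(i)}|-|\lambda^{(i-1)}|},
\]
and substituting $y_i = x_i x_{i+1}\cdots x_{2\ell}$, the exponent of $x_j$ telescopes to
\[
\sum_{i=1}^{j}\bigl(|\lambda^{(i)}|-|\lambda^{(i-1)}|\bigr) = |\lambda^{(j)}|-|\lambda^{(0)}| = |\lambda^{(j)}|-|\alpha|,
\]
which matches the weighting in the definition of $T_{w,\alpha,\beta}$.

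No serious obstacle arises: the argument is essentially bookkeeping, but the pleasant point worth emphasizing is that the rescaling $y_i = x_i x_{i+1}\cdots x_{2\ell}$ is precisely what converts the local weights tracking consecutive size differences (which is what the vertex operators naturally produce) into the cumulative weights tracking the absolute sizes of the individual partitions $\lambda^{(j)}$ (which is what Theorem~\ref{thm:mainWithStanleyWeights} requires via Stanley's weighting scheme).
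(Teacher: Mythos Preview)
Your proof is correct and follows essentially the same approach as the paper's own proof: expand the matrix element as a sum over interlaced sequences, observe that each one-step contribution equals $y_i^{|\lambda^{(i)}|-|\lambda^{(i-1)}|}$, and then telescope using $y_i = x_i x_{i+1}\cdots x_{2\ell}$ to recover the cumulative weights $x_j^{|\lambda^{(j)}|-|\alpha|}$. The paper's version is simply more terse, writing the expansion in one line with generic parameters $q_i$ and then specializing $q_i = y_i^{w_i}$.
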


\begin{proof}
  By the definition of the operators $\Gamma_\pm$ and $\Gamma'_\pm$,
  we have
  \begin{equation}
    \langle\alpha|
    \prod_{i=1}^\ell \Gamma_{w_{2i-1}}(q_{2i-1})
    \Gamma'_{w_{2i}}(q_{2i})
    |\beta \rangle
    = \sum
    \prod_{i=1}^{2\ell} q_{i}^{w_{i}(|\lambda^{(i)}|-|\lambda^{(i-1)}|)}
  \end{equation}
  where the sum runs over all sequences of partitions
  $(\lambda^{(0)},\lambda^{(1)}, \dots, \lambda^{(2\ell)})$ satisfying
  the interlacing conditions of Proposition~\ref{prop:tilseqbij}.
  Taking $q_i=y_i^{w_i}$, we obtain the correct weight, since
  $q_i^{w_i}=y_i$, and since in the product $\prod_{i=1}^{2\ell}
  (x_ix_{i+1}\dots x_{2\ell})^{|\lambda^{(i)}|-|\lambda^{(i-1)}|}$ the
  exponent of $x_i$ is $\sum_{j=1}^i
  |\lambda^{(j)}|-|\lambda^{(j-1)}|=|\lambda^{(i)}|-|\alpha|$.
\end{proof}

Vertex operators are known (see e.g.\
\cite[Lemma~3.3]{Young:orbifolds}) to satisfy the following nontrivial
commutation relations
\begin{equation}
  \label{eq:gamcom}
  \begin{split}
    \Gamma_+(t)\Gamma_-(u) &= \frac{1}{1-tu} \Gamma_-(u)\Gamma_+(t),\\
    \Gamma'_+(t)\Gamma'_-(u) &= \frac{1}{1-tu} \Gamma'_-(u)\Gamma'_+(t),\\
    \Gamma_+(t)\Gamma'_-(u) &= (1+tu) \Gamma'_-(u)\Gamma_+(t),\\
    \Gamma'_+(t)\Gamma_-(u) &= (1+tu) \Gamma_-(u)\Gamma'_+(t),
  \end{split}
\end{equation}
while other commutation relations are trivial (namely, two vertex
operators with the same sign in index commute together).  Note the
following more compact way of rewriting \eqref{eq:gamcom}: fix two
symbols $\Gamma^\sA, \Gamma^\sB\in\{\Gamma,\Gamma'\}$, then
\begin{equation}\label{eq:compactCommutationRelation}
  \Gamma^\sA_+(t)\Gamma^\sB_-(u) = (1+\epsilon tu)^\epsilon
  \,\Gamma^\sB_-(u)\Gamma^\sA_+(t),
\end{equation}
where $\epsilon = -1$ if $\Gamma^\sA=\Gamma^\sB$ and $\epsilon=+1$
otherwise.

\begin{proof}[Proof of Theorem~\ref{thm:mainWithStanleyWeights}]
  By Proposition~\ref{prop:tilseqbij}, the desired generating function
  $T_w$ of pure steep tilings is nothing but
  $T_{w,\emptyset,\emptyset}$, as expressed by
  Lemma~\ref{lem:gfInTermsOfOperators} in terms of vertex operators
  (note that $T_w$ does not depend on $x_{2\ell}$). We may now
  evaluate the right hand side of \eqref{eq:gfInTermsOfOperators}:
  note first that for any weights $z_1,\dots,z_{2\ell}$, symbols
  $(\Gamma^{\diamond_i})_{1\leq i \leq 2\ell}\in \{\Gamma,
  \Gamma'\}^{2\ell}$ and $m$ between $1$ and $2\ell$, we have
  \begin{equation}\label{eq:trivialScalarProduct}
    \langle\emptyset|
    \prod_{i=1}^m \Gamma^{\diamond_i}_{-}(z_{i})
    \prod_{i=m+1}^{2\ell}
    \Gamma^{\diamond_i}_{+}(z_{i})
    |\emptyset\rangle =1,
  \end{equation}
  since by definition $\Gamma^\diamond_+|\emptyset \rangle =
  |\emptyset\rangle$ and $\langle \emptyset | \Gamma^\diamond_- =
  \langle \emptyset |$ for any $\Gamma^\diamond\in\{\Gamma,
  \Gamma'\}$. Therefore we can
  evaluate~\eqref{eq:gfInTermsOfOperators} by ``moving'' all the
  $\Gamma_-$ and $\Gamma'_-$ operators to the left using the
  commutation relations \eqref{eq:gamcom}. We will end up with a
  multiplicative prefactor coming from the commutation relations, and
  a remaining scalar product of the
  form~\eqref{eq:trivialScalarProduct} which evaluates to $1$.

  In the process of moving these operators to the left, for each $i<j$
  such that $w_i=+$ and $w_j=-$ we have to exchange the operators
  $\Gamma^\sA_+(y_i)$ and $\Gamma^\sB_-(y_j^{-1})$ where $\Gamma^\sA$
  (resp.\ $\Gamma^\sB$) is equal to $\Gamma$ if $i$ (resp.\ $j$) is
  odd, and to $\Gamma'$ if $i$ (resp.\ $j$) is even.
  By~\eqref{eq:compactCommutationRelation}, the multiplicative
  contribution of this exchange is equal to
  \begin{equation}
    \left(1+\epsilon_{i,j} \frac{y_i}{y_j}\right)^{\epsilon_{i,j}} =
    \left(1+\epsilon_{i,j} x_{i}x_{i+1}\dots x_{j-1}\right)^{\epsilon_{i,j}},
  \end{equation}
  where $\epsilon_{i,j}=-1$ if $i$ and $j$ have the same parity, and
  $\epsilon_{i,j}=1$ otherwise. The desired expression
  \eqref{eq:mainWithStanleyWeights} follows.
\end{proof}

\begin{rem}
  As pointed out by Paul Zinn-Justin \cite{PZJBerkeley}, it is also
  possible to relate the vertex operator formalism to domino tilings
  via the six-vertex model on the free-fermion line, the product
  $\Gamma_+(t) \Gamma'_-(u)$ corresponding essentially to the transfer
  matrix of this model.
\end{rem}

\begin{rem}
  In the case of domino tilings of the Aztec diamond, the vertex
  operator computation can be related to domino shuffling
  \cite{EKLP1992b}. Indeed, the commutation relation between
  $\Gamma_+(t)$ and $\Gamma'_-(u)$ can be derived bijectively via
  domino shuffling (we leave this as a pleasant exercise to the
  reader) and from it we deduce the relation
  \begin{multline}
    \langle \emptyset | \Gamma_+(z_1) \Gamma'_-(z_2) \Gamma_+(z_3)
    \Gamma'_-(z_4) \Gamma_+(z_5) \Gamma'_-(z_6) \cdots
    \Gamma_+(z_{2\ell-1})
    \Gamma'_-(z_{2\ell}) | \emptyset \rangle = \\
    \prod_{i=1}^\ell (1+z_{2i-1}z_{2i}) \times \langle \emptyset |
    \Gamma_+(z_1) \Gamma'_-(z_4) \Gamma_+(z_3) \Gamma'_-(z_6) \Gamma_+(z_5) \cdots
    \Gamma'_-(z_{2\ell}) | \emptyset \rangle
  \end{multline}
  which can be interpreted combinatorially as a $2^\ell$-to-$1$
  correspondence between tilings of the Aztec diamonds of orders
  $\ell$ and $\ell-1$. We readily recover Stanley's formula
  \eqref{eq:stanley} by induction.
\end{rem}

\subsection{General steep tilings with prescribed boundary conditions}
\label{sec:vertexfix}

We now consider the enumeration of not necessarily pure steep tilings
with prescribed boundary conditions (i.e.\ the tiled region is fixed):
by Proposition~\ref{prop:tilseqbij} and
Lemma~\ref{lem:gfInTermsOfOperators} this amounts to evaluating the
right hand side of \eqref{eq:gfInTermsOfOperators} when
$(\alpha,\beta) \neq (\emptyset,\emptyset)$.

Let us first consider the case of plane overpartitions discussed in
Section~\ref{sec:overpart}, where $w=+^{2\ell}$, $\alpha=\emptyset$
and $\beta=\lambda$. The corresponding generating function is
\begin{equation}
  \label{eq:overpartgf}
  \begin{split}
    T_{+^{2\ell},\emptyset,\lambda} &= \langle \emptyset |
    \Gamma_+(y_1) \Gamma'_+(y_2) \Gamma_+(y_3) \Gamma'_+(y_4) 
    \cdots \Gamma_+(y_{2\ell-1}) \Gamma'_+(y_{2\ell}) | \lambda \rangle \\
    &= s_\lambda(y_1,y_3,\ldots,y_{2\ell-1}/y_2,y_4,\ldots,y_{2\ell})
  \end{split}
\end{equation}
where $s_\lambda(\cdot/\cdot)$ is a \emph{super Schur function}, also
known as hook Schur function, see \cite{Kra96} and references
therein. Indeed, the second equality of \eqref{eq:overpartgf} may be
obtained by moving all $\Gamma_+'$ to the right (since all operators
commute with each other) and observing that the resulting expression
counts some super semistandard tableaux, also called
$(\ell,\ell)$-semistandard tableaux \cite{Rem84}. Alternatively, a
direct bijection between plane overpartitions and super semistandard
tableaux was given in \cite[Remark~1]{CSV}. If we specialize $x_i=q$
for all $i$, so that $y_i=q^{2\ell+1-i}$, the generating function
specializes to
$s_{\lambda}(q^2,q^4,\ldots,q^{2\ell}/q,q^3,\ldots,q^{2\ell-1})$: to
the best of our knowledge, there is no known hook-content-type formula
for this specialization, except in the $\ell \to \infty$ limit where
we have \cite{Kra96,CSV}
\begin{equation}
  s_{\lambda}(a q^2, a q^4,\ldots/b q, b q^3,\ldots) =
  q^{2|\lambda|} \prod_{\rho \in \lambda} \frac{a + b q^{2c(\rho)-1}}{1-q^{2h(\rho)}}.
  \label{eq:sss}
\end{equation}
Here the product is over all cells $\rho$ of the Young diagram of
$\lambda$, $h(\rho)$ and $c(\rho)$ being respectively the hook length
and the content of $\rho$, and $a$ and $b$ are extra parameters that
in our context count the respective numbers of east- and south-going
dominos. (The $\ell \to \infty$ limit is well-defined if the steep
tilings of asymptotic data $+^{2\ell}$ are first translated by
$(-2\ell,2\ell)$ so that they eventually fill the whole $y \geq x$
half-plane as $\ell \to \infty$. Those tilings are nothing but
``half-pyramid partitions''.)

\begin{rem}
  If we set $a_i=c_i=q^{2i-1}$ and $b_{i}=d_{i+1}=q^{2i}$ in the
  Cauchy identity for super Schur functions \cite{Rem84}
  \begin{equation}
    \sum_\lambda s_\lambda(a/b)s_\lambda(c/d)=\prod_{i,j}\frac{(1+a_id_j)(1+b_ic_j)}{(1-a_ic_j)(1-b_id_j)},
  \end{equation}
  then we get back the generating function of pyramid partitions in
  $P_{2\ell}$. This can be related to the fact, already noted in
  Section~\ref{sec:overpart}, that pairs of plane overpartitions with
  compatible shapes are in correspondence with pyramid partitions.
\end{rem}

Let us now discuss the more general situations. For $w=+^{2\ell}$ and
arbitrary boundary conditions $\alpha$ and $\beta$, we have
\begin{equation}
  \label{eq:skewoverpartgf}
  \begin{split}
    T_{+^{2\ell},\alpha,\beta} &= \langle \alpha |
    \Gamma_+(y_1) \Gamma'_+(y_2) \Gamma_+(y_3) \Gamma'_+(y_4) 
    \cdots \Gamma_+(y_{2\ell-1}) \Gamma'_+(y_{2\ell}) | \beta \rangle \\
    &= s_{\beta/\alpha}(y_1,y_3,\ldots,y_{2\ell-1}/y_2,y_4,\ldots,y_{2\ell})
  \end{split}
\end{equation}
where $s_{\beta/\alpha}(\cdot,\cdot)$ is a skew super Schur function
(or $(\ell,\ell)$-hook skew Schur function).
Let us instead consider a general word $w=\{+,-\}^\ell$ and boundary
conditions of the form $\alpha=\emptyset$, $\beta=\lambda$. Let
$i_1<i_2<\cdots<i_n$ and $i'_1<i'_2<\cdots<i'_m$ denote the
respectively odd and even positions of the $+$'s in $w$. Upon moving
all $\Gamma_-$ and $\Gamma'_-$ to the left in
\eqref{eq:gfInTermsOfOperators} where they are ``absorbed'' by
$\langle \emptyset |$, we find that
\begin{equation}
  T_{w,\emptyset,\lambda} =  T_w s_\lambda(y_{i_1},y_{i_2},\ldots,y_{i_n}/y_{i'_1},y_{i'_2},\ldots,y_{i'_m}).
\end{equation}
Finally, for general $w$, $\alpha$ and $\beta$, we may recast
$T_{w,\alpha,\beta}/T_w$ as a (finite) sum of the form $\sum_\nu
s_{\alpha/\nu}(\cdot/\cdot) s_{\beta/\nu}(\cdot/\cdot)$. Writing down
an explicit formula is left to the interested reader. We are not aware
of any specialization of the super Schur functions besides
\eqref{eq:sss} that would provide a ``nice'' formula for
$T_{w,\alpha,\beta}$ for generic $w$ or $\alpha$.

\subsection{General steep tilings with free boundary conditions}
\label{sec:vertexfree}

We now wish to study steep tilings with ``free'' boundary conditions,
i.e.\ the tiled region is not prescribed, or in other words we do not
specify the first and last element of their corresponding sequences of
interlaced partitions. We thus consider generating functions of the
form
\begin{equation}
  \label{eq:Fwfor}
  F_w(u,v) = \sum_{\alpha,\beta} u^{|\alpha|} v^{|\beta|} T_{w,\alpha,\beta}
\end{equation}
which count \emph{all} steep tilings with asymptotic data $w$, the
exponents of $u$ and $v$ recording the numbers of boundary flips
needed on both sides to obtain a given tiling from the pure minimal
tiling $T_\mathrm{min}^w$. Interestingly, it is possible to obtain a
nice expression for $F_w(s,t)$ by the vertex operator formalism, as we
will now explain.

Introducing the \emph{free boundary states}
\begin{equation}
  \label{eq:fbsdef}
  \langle \underline{u} | = \sum_{\lambda} u^{|\lambda|} \langle \lambda |
  \qquad \text{and} \qquad
  | \underline{v} \rangle = \sum_{\lambda} v^{|\lambda|} | \lambda \rangle,
\end{equation}
where the sums range over all partitions, it immediately follows from
\eqref{eq:gfInTermsOfOperators} that
\begin{equation}
  F_w(u,v) = \langle\underline{u}|
  \prod_{i=1}^\ell \Gamma_{w_{2i-1}}(y_{2i-1}^{w_{2i-1}})
  \Gamma'_{w_{2i}}(y_{2i}^{w_{2i}}) \,
  |\underline{v}\rangle\label{eq:Fwvertexp},
\end{equation}
where we recall that $y_i=x_ix_{i+1}\cdots x_{2\ell}$.  To evaluate
this expression, it is necessary to understand how the $\Gamma$
operators act on the free boundary states.

\begin{prop}[Reflection relations]
  We have
  \begin{equation}
    \label{eq:reflrel}
    \begin{split}
      \Gamma_+(t)\, |\underline{v}\rangle &= \frac{1}{1-tv} \Gamma_-(t
      v^2)\, |\underline{v}\rangle,\\
      \Gamma'_+(t)\, |\underline{v}\rangle &= \frac{1}{1-tv} \Gamma'_-(t
      v^2)\, |\underline{v}\rangle,\\
      \langle\underline{u}|\, \Gamma_-(t) &= \frac{1}{1-tu}
      \langle\underline{u}|\, \Gamma_+(t u^2),\\
      \langle\underline{u}|\, \Gamma'_-(t) &= \frac{1}{1-tu}
      \langle\underline{u}|\, \Gamma'_+(t u^2).
    \end{split}
  \end{equation}
\end{prop}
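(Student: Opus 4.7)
The plan is to prove the first reflection relation by direct computation on the basis of partitions, and then deduce the other three relations by two symmetry arguments (conjugation of partitions and passage to the adjoint). First, expanding $\Gamma_+(t)|\underline{v}\rangle$ and $\frac{1}{1-tv}\Gamma_-(tv^2)|\underline{v}\rangle$ in the basis $\{|\mu\rangle\}$, the definitions of the vertex operators and of $|\underline{v}\rangle$ immediately give that the coefficient of $|\mu\rangle$ on the two sides is, respectively,
\begin{equation*}
  v^{|\mu|}\sum_{\lambda\succ\mu}(tv)^{|\lambda|-|\mu|}
  \qquad\text{and}\qquad
  \frac{v^{|\mu|}}{1-tv}\sum_{\lambda\prec\mu}(tv)^{|\mu|-|\lambda|}.
\end{equation*}
Setting $q=tv$, the first reflection relation thus reduces to the combinatorial identity
\begin{equation*}
  (1-q)\sum_{\lambda\succ\mu}q^{|\lambda|-|\mu|}=\sum_{\lambda\prec\mu}q^{|\mu|-|\lambda|}.
\end{equation*}

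To prove this identity, I would parameterize horizontal strips explicitly. Writing $a_i=\lambda_i-\mu_i$ for $\lambda\succ\mu$, the interlacing conditions become $a_1\geq 0$ arbitrary and $0\leq a_{i+1}\leq\mu_i-\mu_{i+1}$ for $i\geq 1$; writing $b_i=\mu_i-\lambda_i$ for $\lambda\prec\mu$, they become $0\leq b_i\leq\mu_i-\mu_{i+1}$ for $i\geq 1$. Summing the corresponding (geometric and finite) series, both sides of the identity evaluate to $\prod_{i\geq 1}\frac{1-q^{\mu_i-\mu_{i+1}+1}}{1-q}$, the unbounded sum over $a_1$ producing the extra factor $\frac{1}{1-q}$ on the left that is exactly cancelled by the prefactor $(1-q)$.

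The remaining three relations follow formally. For the second, I would use that $\Gamma'_\pm(t)=\omega\Gamma_\pm(t)\omega$ and that $|\underline{v}\rangle$ is invariant under the conjugation $\omega:|\lambda\rangle\mapsto|\lambda'\rangle$ (since $|\lambda'|=|\lambda|$), so that sandwiching the first relation with $\omega$ on the left and inserting $\omega^2=\mathrm{id}$ on the right yields the second. For the third and fourth, I would equip the space of partitions with the orthonormal inner product $\langle\mu|\lambda\rangle=\delta_{\mu\lambda}$ given in the paper; a direct inspection of matrix elements shows $\Gamma_+(t)^\ast=\Gamma_-(t)$, $\Gamma'_+(t)^\ast=\Gamma'_-(t)$, and $|\underline{v}\rangle^\ast=\langle\underline{v}|$, so that taking adjoints of the first and second relations and renaming $v$ into $u$ produces the third and fourth. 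The only nontrivial step is therefore the combinatorial identity above; once the strips are properly parameterized the rest is routine.
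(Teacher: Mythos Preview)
Your proof is correct and follows the same overall structure as the paper's: reduce to the first relation, rewrite it as the combinatorial identity
\[
(1-q)\sum_{\lambda\succ\mu}q^{|\lambda|-|\mu|}=\sum_{\lambda\prec\mu}q^{|\mu|-|\lambda|},
\]
and deduce the remaining three relations by conjugation and duality. The only difference is in how the identity itself is established. You evaluate both sides as the common product $\prod_{i\geq 1}\frac{1-q^{\mu_i-\mu_{i+1}+1}}{1-q}$ via geometric series; the paper instead gives a direct bijection $\lambda\mapsto(\nu,k)$ with $\nu_i=\mu_i+\mu_{i+1}-\lambda_{i+1}$ and $k=\lambda_1-\mu_1$, which matches the two sides without computing their value. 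Both arguments are short and valid; the bijective version is a bit more in the spirit of the paper's combinatorial emphasis, while yours is more mechanical but equally rigorous.
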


\begin{proof}
  These amount to \cite[I.5, Ex.\ 27(a), (3)]{McDo} but let us provide
  here a combinatorial derivation.  It is sufficient to establish the
  first relation, which implies the others by conjugation and
  duality. This amounts to proving that, for any partition $\mu$, we
  have
  \begin{equation}
    \sum_{\lambda: \lambda \succ \mu} t^{|\lambda/\mu|} v^{|\lambda|} = \frac{1}{1-tv}
    \sum_{\nu: \nu \prec \mu} (tv^2)^{|\mu/\nu|} v^{|\nu|}.
  \end{equation}
  Given $\lambda$ such that $\lambda \succ \mu$, set $\nu_i=\mu_i +
  \mu_{i+1} - \lambda_{i+1}$ and $k=\lambda_1-\mu_1$: it is readily
  checked that $\nu$ is a partition such that $\nu \prec \mu$,
  satisfying $|\lambda/\mu|=|\mu/\nu|+k$, and that the mapping
  $\lambda \mapsto (\nu,k)$ is bijective ($k$ being an arbitrary
  nonnegative integer). The wanted identity follows.
\end{proof}

We now apply the reflection relations to the evaluation of
\eqref{eq:Fwvertexp}. As a warmup, let us consider the case of plane
overpartitions where $u=0$ (so that the left boundary
state is $\langle \emptyset |$) and $w=+^{2\ell}$: it is
straightforward to check that
\begin{equation}
  \label{eq:superlittlewood}
  F_{(+^{2\ell})}(0,v) = \prod_{i=1}^{2\ell} \frac{1}{1-v y_i}
  \prod_{\substack{1 \leq i < j \leq 2\ell \\ j-i \text{ odd}}} (1 +
  v^2 y_i y_j) \prod_{\substack{1 \leq i < j \leq 2\ell \\ j-i
      \text{ even}}} \frac{1}{1 - v^2 y_i y_j}
\end{equation}
(``bounce'' the $\Gamma_+/\Gamma'_+$ on $|\underline{v}\rangle$ then
move the resulting $\Gamma_-/\Gamma'_-$ to the left where they are
``absorbed'' by $\langle \emptyset |$, and collect all factors
obtained on the way).

\begin{rem}
  Recalling that $F_{(+^{2\ell})}(0,v)=\sum_\lambda v^{|\lambda|}
  s_\lambda(y_1,y_3,\ldots/y_2,y_4,\ldots)$, the
  expression~\eqref{eq:superlittlewood} is actually equivalent to the
  so-called Littlewood identity \cite[I.5, Ex.\ 4]{McDo} (recovered by
  taking $v=1$ and, say, $y_{2i-1}=z_i$ and $y_{2i}=0$ for
  $i=1,\ldots,\ell$).
\end{rem}

\begin{rem}
  We recover the generating function of plane overpartitions of
  arbitrary shape computed in \cite{CSV} by taking the appropriate
  weight specialization, namely $v=1$ and, for $i=1,\ldots,\ell$,
  $y_{2i-1}=q^{\ell+1-i}$ and $y_{2i}=a q^{\ell+1-i}$. (Note that we
  do not quite recover \cite[Theorem~6]{CSV} which contains a typo,
  but the correct formula which is on the second line of the first
  equation before \cite[Theorem~13]{CSV}.)
\end{rem}

Before writing down formulas in more general situations, let us recall
the shorthand notation
\begin{equation}
  \label{eq:phidef2}
  \varphi_{i,j}(x) =
  \begin{cases}
    1 + x & \text{if $j-i$ is odd,}\\
    1/(1-x) & \text{if $j-i$ is even.}
  \end{cases}
\end{equation}
Keeping $u=0$ (mixed boundary conditions) but taking $w$ general, we
easily obtain
\begin{equation}
  F_w(0,v) = \prod_{i:\ w_i=+} \frac{1}{1-v y_i}
  \prod_{\substack{ i < j \\ w_i=+,\ w_j=-}} \varphi_{i,j}(y_i/y_j)
  \prod_{\substack{ i < j \\  w_i=w_j=+}} \varphi_{i,j}(v^2 y_i y_j).
\end{equation}
By taking $v=1$ and $x_i=q$ for all $i$, i.e.\ $y_i=q^{2\ell+1-i}$, we
obtain the expression \eqref{eq:mainmixed} announced in
Theorem~\ref{thm:mainfree}.

\begin{figure}[htpb]
  \centering
  \includegraphics[width=.4\textwidth]{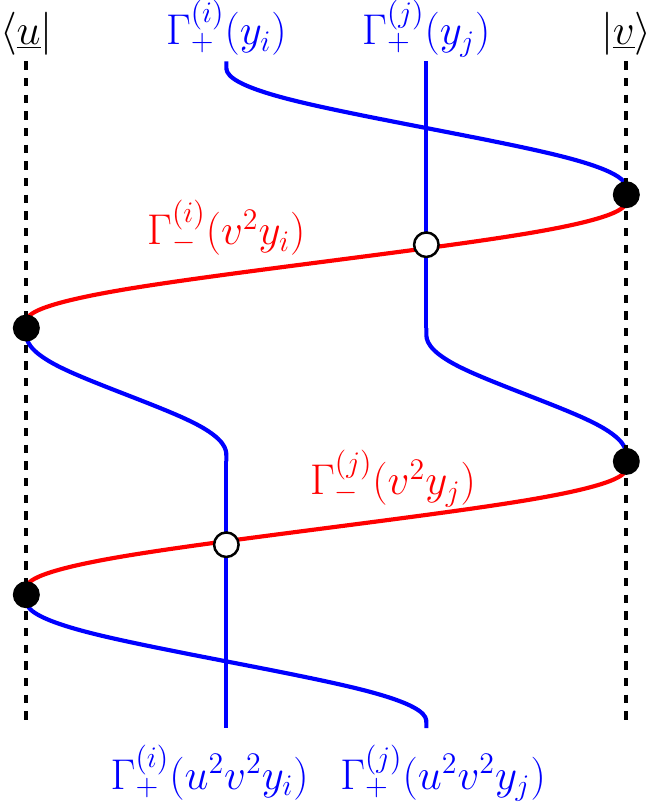}
  \caption{Schematic picture of the computation of
    \eqref{eq:Fwvertexp} in the case $w=+^{2\ell}$. We get a factor
    contributing to \eqref{eq:Fuveval} when an operator bounces on the
    boundary (black circles) or when a $\Gamma_+/\Gamma'_+$ crosses a
    $\Gamma_-/\Gamma'_-$ (white circles). Here we only represent the
    trajectories of two operators (with $\Gamma^{(k)}=\Gamma$ if $k$
    is odd and $\Gamma'$ otherwise). }
  \label{fig:freeBoundary}
\end{figure}

Slightly more involved expressions, involving infinite products, arise
when considering the case where both $u$ and $v$ are nonzero. Again we
begin with the case $w=+^{2\ell}$.  The strategy to evaluate
\eqref{eq:Fwvertexp} in this case is to pick each $\Gamma_+/\Gamma'_+$
(say, successively from left to right), move it to the right and
bounce it on $| \underline{v} \rangle$, move the resulting
$\Gamma_-/\Gamma'_-$ to the left and bounce it on $\langle
\underline{u} |$, then finally put back the resulting
$\Gamma_+/\Gamma'_+$ into place, see Figure~\ref{fig:freeBoundary} for
an illustration. In this process, we collect some factors arising from
the bounces and the crossings between operators with different
indices, and we end up with the same expression as at the beginning
except that every $\Gamma$ has its parameter multiplied by $u^2 v^2$.
In more explicit terms, we have
\begin{multline}
  \label{eq:Fuveval}
  \langle \underline{u} |\, \Gamma_+(y_1) \Gamma'_+(y_2) \cdots
  \Gamma_+(y_{2\ell-1}) \Gamma'_+(y_{2\ell})\, |\underline{v} \rangle = \\
  \prod_{i=1}^{2\ell} \frac{1}{(1-v y_i)(1-u v^2 y_i)} \prod_{1 \leq i
    < j \leq 2\ell} \varphi_{i,j}(v^2 y_i
  y_j) \varphi_{i,j}(u^2 v^4 y_i y_j) \times \\
  \langle \underline{u} |\, \Gamma_+(u^2 v^2 y_1) \Gamma'_+(u^2 v^2
  y_2) \cdots \Gamma_+(u^2 v^2 y_{2\ell-1}) \Gamma'_+(u^2 v^2
  y_{2\ell})\, |\underline{v} \rangle.
\end{multline}
Upon iterating this relation $k$ times, we pull out more factors, with
a remaining product of $\Gamma_+/\Gamma'_+$ operators with parameters
of the form $(uv)^{2k} y_i$. But we have
\begin{equation}
  \lim_{k \to \infty} \Gamma_+((uv)^{2k} y_i) =
  \lim_{k \to \infty} \Gamma'_+((uv)^{2k} y_i) = 1
\end{equation}
(where the $\Gamma$ shall be viewed as infinite matrices whose
coefficients are formal power series), hence the remaining product
tends to $\langle \underline{u} | \underline{v} \rangle = \prod_{k
  \geq 1} 1/(1-u^kv^k)$. Rearranging the factors, we end up with the
expression
\begin{equation}
    \label{eq:Fuvformplus}
  F_{(+^{2\ell})}(u,v) = \prod_{k = 1}^{\infty} \left(
    \frac{1}{1-u^k v^k} \prod_{i=1}^{2\ell}
    \frac{1}{1-u^{k-1} v^k y_i} \prod_{1 \leq i
    < j \leq 2\ell} \varphi_{i,j}(u^{2k-2} v^{2k} y_i y_j) \right).
\end{equation}
Finally, for general $w$, a straightforward adaptation of our strategy
yields
\begin{multline}
  \label{eq:Fuvformgen}
  F_w(u,v) = \prod_{k = 1}^{\infty} \Bigg( \frac{1}{1-u^k v^k}
    \prod_{i:\, w_i=+} \frac{1}{1-u^{k-1} v^k y_i}
    \prod_{i:\, w_i=-} \frac{1}{1-u^k v^{k-1}/y_i} \\
    \prod_{\substack{1 \leq i < j \leq 2\ell \\ w_i=+,\ w_j=-}}
    \varphi_{i,j}(u^{2k-2} v^{2k-2} y_i/y_j)
    \prod_{\substack{1 \leq i < j \leq 2\ell \\ w_i=w_j=+}}
    \varphi_{i,j}(u^{2k-2} v^{2k} y_i y_j)\\
    \prod_{\substack{1 \leq i < j \leq 2\ell \\ w_i=-,\ w_j=+}}
    \varphi_{i,j}(u^{2k} v^{2k} y_j/y_i)
    \prod_{\substack{1 \leq i < j \leq 2\ell \\ w_i=w_j=-}}
    \varphi_{i,j}(u^{2k} v^{2k-2}/(y_i y_j)) \Bigg).
\end{multline}
The reader might be wary of the divisions by $y_i$ or $y_j$, but
recall from Lemma~\ref{lem:gfInTermsOfOperators} that
$y_i=x_ix_{i+1}\cdots x_{2\ell}$ where $x_i$ records the size
difference between $\lambda^{(i)}$ and $\alpha=\lambda^{(0)}$.  To
obtain a \emph{bona fide} power series we need to do the change of
variables $u \to u y_0$, and we may then even set $u=v=1$ without
trouble to obtain the ``true'' generating function of all steep
tilings counted with a weight $x_i$ per flip on the $i$-th diagonal.
In particular, if we take $x_i=q$ for all $i$, hence
$y_i=q^{2\ell+1-i}$, $u=q^{2\ell+1}$ and $v=1$ in
\eqref{eq:Fuvformgen}, then by rearranging the products we obtain the
expression \eqref{eq:mainfree} announced in
Theorem~\ref{thm:mainfree}, whose proof is now complete.

\begin{rem}
  The other equations in \cite[I.5, Ex.\ 27]{McDo} suggest other types
  of ``free'' boundary conditions. For instance, if we consider the
  state
  \begin{equation}
    | \underline{\tilde{v}} \rangle = \sum_{\lambda \text{ even}}
    v^{|\lambda|} | \lambda \rangle = \Gamma_-(-v) |\underline{v} \rangle
  \end{equation}
  (where a partition is said even if all its parts are even), then we
  have the modified reflection relations
  \begin{equation}
    \Gamma_+(t)\, | \underline{\tilde{v}} \rangle = 
    \frac{1}{1-t^2 v^2} \Gamma_-(t v^2)\, | \underline{\tilde{v}} \rangle,
    \qquad
    \Gamma'_+(t)\, | \underline{\tilde{v}} \rangle = 
    \Gamma'_-(t v^2)\, | \underline{\tilde{v}} \rangle
  \end{equation}
  and we may then obtain different product formulas for steep tilings
  with such boundary condition. Considering instead a sum over
  partitions of the form
  $(\alpha_1-1,\ldots,\alpha_p-1|\alpha_1,\ldots,\alpha_p)$ in
  Frobenius notation, we obtain a boundary state that mutates a
  $\Gamma_+$ into a $\Gamma'_-$ and a $\Gamma'_+$ into a $\Gamma_+$,
  up to factors (it would be interesting to have a combinatorial proof
  of this fact).
\end{rem}

\begin{rem}
  For a random steep tiling with mixed boundary conditions, the
  associated sequence of interlaced partitions forms a so-called
  \emph{Pfaffian Schur process} \cite{BR05}. The case with two free
  boundaries has, to the best of our knowledge, not been considered
  before.
\end{rem}

\section{Cylindric steep tilings}
\label{sec:cyl}

By a variant of our approach we may consider \emph{cylindric steep
  tilings} of width $2\ell$: these may be viewed as domino tilings of
the plane which are periodic in one direction, namely they are
invariant under a translation of vector $(c,c-2\ell)$ for some $c$,
and which are steep in the same sense as before, that is we only find
north- or east-going (resp.\ south- or west-going) dominos
sufficiently far away in the north-east (resp.\ south-west)
direction. We may restrict a cylindric steep tiling to a fundamental
domain by cutting it ``along'' the $y=x$ and $y=x-2\ell$ lines (more
precisely we cut along the lattice paths that remain closest to these
lines and follow domino boundaries): we then obtain a steep tiling of
the oblique strip as before, with the only additional constraint that
the two boundaries must ``fit'' into each other. We define the
asymptotic data $w \in \{+,-\}^{2\ell}$ as before.

Then, we may proceed as in Section~\ref{sec:bijection} and construct
the particle configuration and sequence of integer partitions
associated to the tiling. Clearly, the additional constraint is that
the particle configuration on the lines $y=x$ and $y=x-2\ell$ must be
the same up to translation, which implies that the associated
partitions $\lambda^{(0)}$ and $\lambda^{(2\ell)}$ are equal, and that
the parameter $c$ above must be equal to $c_{2\ell}-c_0$, as defined
by \eqref{eq:xxplimrel} (the cylindric steep tiling is said centered
if $c_0=0$). We readily arrive at an analogue of
Proposition~\ref{prop:tilseqbij}, with however a caveat regarding
flips. Indeed, when viewing a cylindric steep tiling as a periodic
tiling, a flip consists in rotating a $2 \times 2$ block of dominos
and all its translates (as we wish to preserve periodicity). When
considering the tiling restricted to a fundamental domain, regular
(bulk) flips are defined as before, but a boundary flip is only
allowed if a corresponding flip can be and is performed on the other
boundary in order to preserve the shape compatibility (this pair of
moves is a flip centered on the $2\ell$-th diagonal).

\begin{prop}
  \label{prop:cyltilseqbij}
  Given a word $w \in \{ +,- \}^{2\ell}$, there is a bijection between
  the set of centered cylindric steep tilings with asymptotic data $w$
  and the set of sequences of partitions
  $(\lambda^{(0)},\ldots,\lambda^{(2\ell)})$ with
  $\lambda^{(0)}=\lambda^{(2\ell)}$ and such that, for all $k \in
  \{1,\ldots,\ell\}$,
  \begin{itemize}
  \item $\lambda^{(2k-2)} \prec \lambda^{(2k-1)}$ if $w_{2k-1}=+$, and
    $\lambda^{(2k-2)} \succ \lambda^{(2k-1)}$ if $w_{2k-1}=-$,
  \item $\lambda^{(2k-1)} \prec' \lambda^{(2k)}$ if $w_{2k}=+$, and
    $\lambda^{(2k-1)} \succ' \lambda^{(2k)}$ if $w_{2k}=-$.
  \end{itemize}
  Furthermore, the bijection has the following properties.
  \begin{itemize}
  \item[B'.] For $m=1,\ldots,2\ell$, the absolute value of
    $|\lambda^{(m)}|-|\lambda^{(m-1)}|$ counts the number of dominos
    whose centers are on the line $y=x-m+1/2$ and whose orientations
    are opposite to the asymptotic one, as detailed in
    Table~\ref{tab:orient}.
  \item[C'.] If $w$ contains at least one $+$ and one $-$, then for
    any $m=1,\ldots,2\ell$, $|\lambda^{(m)}|$ counts the number of
    flips centered on the $m$-th diagonal in any minimal sequence of
    flips between the tiling at hand and the minimal tiling
    corresponding to the sequence
    $(\emptyset,\emptyset,\ldots,\emptyset)$.
  \end{itemize}
\end{prop}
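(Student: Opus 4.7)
The plan is to adapt the proof of Proposition~\ref{prop:tilseqbij}, adding the periodicity compatibility on the two boundaries of a fundamental domain. First I would construct the bijection by restricting a centered cylindric steep tiling to a fundamental domain (a steep tiling of the oblique strip, obtained by cutting along lattice paths close to $y=x$ and $y=x-2\ell$) and applying the particle-to-partition construction of Sections~\ref{sec:particles}--\ref{sec:interpar}; the stated interlacing conditions then hold for the same local reasons as in the strip case. The cylindric condition forces the particle configurations on the two boundary diagonals to be translates of one another by the vector $(c,c-2\ell)$ with $c=c_{2\ell}-c_0$, and by the definition~\eqref{eq:lambdadef} of $\lambda^{(m)}$ in terms of $(x_{m;n})$ and $c_m$, this translation is absorbed into the charges and yields $\lambda^{(0)}=\lambda^{(2\ell)}$. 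Conversely, any sequence of partitions satisfying the listed conditions together with $\lambda^{(0)}=\lambda^{(2\ell)}$ determines, via the inverse strip bijection starting from $c_0=0$, a steep tiling of the strip whose two boundaries match up to translation and hence wraps up into a well-defined centered cylindric steep tiling.

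Property~B' is then immediate, as the local correspondence between domino orientations and consecutive differences $x_{m;n}-x_{m-1;n}$ on the fundamental domain is unchanged from the strip case and Table~\ref{tab:orient} applies verbatim.

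For property~C', I would adapt the height-function argument of Section~\ref{sec:hfun} to the periodic setting. Lifting the cylindric tiling to its universal cover, the difference $H-H_\mathrm{min}^w$ is periodic of period $(c,c-2\ell)$ and therefore bounded and determined by its values on a fundamental domain. A bulk flip modifies $H$ by $\pm 4$ at all periodic translates of a single vertex, while a flip centered on the $2\ell$-th diagonal (i.e.~the simultaneous pair of boundary flips) does the same at a boundary vertex. Adapting the minimality argument of~\cite{EKLP1992,Propp1993} per fundamental domain then yields $r_w(T)=\sum_{(x,y)\in V/\sim}|H(x,y)-H_\mathrm{min}^w(x,y)|/4$, which by the same calculation as in~\eqref{eq:dfliptmin} equals $\sum_{m=1}^{2\ell}|\lambda^{(m)}|$, with the natural diagonal-by-diagonal refinement producing the announced formula.

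The main obstacle is property~C': the standard minimality argument of~\cite{EKLP1992} (see also footnote~\ref{page:erratum}) is set up for finite simply-connected regions and must be transported to the cylindric setting, where one must verify that the non-trivial topology does not allow shortcuts. The hypothesis that $w$ contains at least one $+$ and one $-$ is essential here: when $w=+^{2\ell}$ or $w=-^{2\ell}$, the interlacing conditions around the cycle force every admissible sequence to be constant in $m$, so translating the corresponding tiling in the oblique direction would add the same nonzero contribution to each $|\lambda^{(m)}|$ without corresponding to any flip, contradicting the counting formula. Under the stated hypothesis one checks that $T_\mathrm{min}^w$ is a strict minimum of the height function among cylindric steep tilings of asymptotic data $w$, so that the height-difference argument of the strip case carries over essentially unchanged.
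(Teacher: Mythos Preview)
Your treatment of the bijection and of property~B' is correct and matches the paper: both follow directly from Proposition~\ref{prop:tilseqbij} once one observes that the periodicity of the boundary particle configurations forces $\lambda^{(0)}=\lambda^{(2\ell)}$.

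For property~C', however, your route diverges from the paper's and contains a genuine gap. The paper does \emph{not} use height functions here; it explicitly notes that ``reasoning on height functions becomes more delicate in the cylindric setting'' (multivaluedness on non simply connected domains) and instead argues directly on particles. Its proof shows that any non-minimal tiling admits a descendent flip by running a local exploration around an empty/occupied pair (Figure~\ref{fig:cylFlipProof}): the search either terminates at a descendent flip or wraps around the cylinder, and wrapping produces a ``blocking belt'' with no flips whatsoever, which forces $w=+^{2\ell}$ or $w=-^{2\ell}$. This is how the hypothesis enters.

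Your claim that the height-function argument ``carries over essentially unchanged'' is where the gap lies. The standard minimality argument from \cite{EKLP1992,Propp1993} locates a local maximum of $H$ among vertices with $H>H_{\mathrm{min}}^w$; this works in the strip because that set is finite. On the universal cover the set is infinite (periodic) while $H$ itself carries a nonzero monodromy under the period $(c,c-2\ell)$, so $H$ is unbounded on it and no such maximum need exist. Restricting to a fundamental domain does not help without further work: the maximiser may sit on the boundary, where its ``missing'' neighbours must be compared through the monodromy shift, and you have not shown how to close this case. That this is a real obstruction and not a technicality is witnessed precisely by the excluded case $w=\pm^{2\ell}$, where non-minimal cylindric tilings exist yet admit no flips at all. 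Finally, your diagnosis of the hypothesis is off: $T_{\mathrm{min}}^w$ is the pointwise minimum of the height function for \emph{every} $w$ (each diagonal carries the Russian profile of $\emptyset$); what fails when $w=\pm^{2\ell}$ is the \emph{existence of a descendent flip}, not the minimality of $T_{\mathrm{min}}^w$.
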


\begin{proof}
  The bijectivity and the property B' are obtained along the same
  lines as for Proposition~\ref{prop:tilseqbij}. We only detail the
  proof of the property C' since it involves a slight subtlety. We
  need again to consider height functions, which typically become
  ``multivalued'' functions in the cylindric setting.  Indeed, using
  \eqref{eq:Hasymp} and the characterizations of $c_m$ and $h_m$ in
  terms of the asymptotic data $w$, we find that, for a periodic steep
  tiling with period $(c,c-2\ell)$ and any $(x,y)$ we have
  \begin{equation}
    \label{eq:Hquasiper}
    H(x,y) - H(x+c,y+c-2\ell) = h,
  \end{equation}
  where $h=h_0-h_{2\ell}$ is equal to the number of $+$ in $w$ minus
  that of $-$. For $h \neq 0$, $H$ is quasiperiodic but not periodic
  in the plane hence multivalued on the cylinder. Since the minimal
  height function $H_{\mathrm{min}}^w$ has the same quasiperiodicity
  property, the ``reduced'' height function
  $\widetilde{H}=(H-H_{\mathrm{min}}^w)/4$ is periodic. To a
  cylindring steep tiling $T$ with height function $H$, we associate
  its rank given by
  \begin{equation}
    \label{eq:rtcyl}
    r_w(T) = \sum_{(x,y)\in \widetilde{V}} \widetilde{H}(x,y) = \sum_{m=0}^{2\ell-1} |\lambda^{(m)}|
  \end{equation}
  where
  $\widetilde{V} = \{ (x,y) \in \mathbb{Z}^2, 0 \leq x-y < 2\ell \}$. This
  definition is the cylindric analogue of \eqref{eq:rt} and
  \eqref{eq:dfliptmin}. The property C' results from the following
  claim: \emph{if $r_w(T)>0$ and $w$ is neither $+^{2\ell}$ nor
    $-^{2\ell}$, then $T$ admits a descendent flip} (reducing the rank
  by $1$).

  Such a flip can be obtained by the criterion given in the erratum
  of \cite{EKLP1992}: we first consider the vertices where
  $\widetilde{H}$ is maximal, and we look for one among them where $H$
  is (locally) maximal. If such a vertex exists, it is not difficult
  to see that we may perform a descendent flip on it. But, as $H$ is
  unbounded in the plane for $h\neq 0$, the existence of such a vertex
  is a priori not obvious. Observe first that, since $\widetilde{H}$
  is periodic and $0<r_w(T)<\infty$, $\widetilde{H}$ attains its
  maximal value on a nonempty subset $S$ of $\mathbb{Z}^2$ which
  intersects the fundamental domain $\widetilde{V}$ at finitely many
  points. By contraposition, proving our claim boils down to showing
  that, if $H$ admits no local maximum within $S$, then $w$ is
  necessarily either $+^{2\ell}$ or $-^{2\ell}$.

  Assuming that $H$ admits no local maximum within $S$, we may
  construct an infinite walk $v_0,v_1,\ldots$ in $\mathbb{Z}^2$ as
  follows: $v_0$ is an arbitrary element of $S$ and, assuming that
  $v_{i-1}$ has been constructed, we pick $v_i$ as one of its nearest
  neighbors in $\mathbb{Z}^2$ such that $H(v_i)>H(v_{i-1})$. It is
  easily seen that $v_i\in S$ for all $i$, and since
  $S \cap \widetilde{V}$ is finite, the projection of the walk on the
  cylinder eventually intersects itself, in other words there exists
  $j<j'$ such that $v_{j'}-v_j=k(c,c-2\ell)$ for some
  $k \in \mathbb{Z}$. Note that
  $0 \neq H(v_{j'})-H(v_j) \geq j'-j \geq 2\ell|k|$ by the
  construction of our lattice walk. But we have $H(v_{j'})-H(v_j)=-kh$
  from \eqref{eq:Hquasiper}, so we conclude that $|h| \geq 2\ell$
  hence $w=+^{2\ell}$ or $-^{2\ell}$ as wanted.
\end{proof}

\begin{rem}
  In the case $w=+^{2\ell}$ or $-^{2\ell}$, then clearly any sequence
  of interlaced partitions satisfying the conditions of
  Proposition~\ref{prop:cyltilseqbij} is constant. The rank $r_w(T)$
  of the corresponding tiling $T$ is a multiple of $2\ell$, hence
  there is a clear obstruction to the property C' and to the presence
  of flips in $T$. This is actually not due to the cylindrical
  topology \emph{per se}, but to the presence of ``forced cycles'' in
  the associated perfect matching, see \cite[Example~2.3]{Propp1993}
  (in fact Jockusch's graph is isomorphic to a truncated cylindric
  tilted square grid of circumference 4, thus corresponds essentially
  to the same situation as ours).
\end{rem}

The enumerative consequence of Proposition~\ref{prop:cyltilseqbij} is
the following statement, which readily implies Theorem~\ref{thm:maincyl}
by taking $x_i=q$ for all $i$.

\begin{thm}
  \label{thm:cylgf}
  Let $w \in \{+,-\}^{2\ell}$ be a word. Let $C_w \equiv
  C_w(x_1,\ldots,x_{2\ell})$ be the generating function of
  cylindric steep tilings of asymptotic data $w$, where the exponent
  of the variable $x_i$ records the number of flips centered on the
  $i$-th diagonal in a shortest sequence of flips from the minimal
  tiling. Then one has
  \begin{multline}
    \label{eq:cylgf}
    C_w = \prod_{k \geq 1} \Bigg( \frac{1}{1-y^k}
    \prod_{\substack{1 \leq i < j \leq 2\ell \\ w_i=+,\ w_j=-}}
    \varphi_{i,j}(y^{k-1} x_i x_{i+1} \cdots x_{j-1}) \\
    \prod_{\substack{1 \leq i < j \leq 2\ell \\ w_i=-,\ w_j=+}}
    \varphi_{i,j}(y^{k-1} x_1 x_2 \cdots x_{i-1} x_j x_{j+1} \cdots
    x_{2\ell}) \Bigg)
  \end{multline}
  where $y=x_1 \cdots x_{2\ell}$ and $\varphi_{i,j}(\cdot)$ is defined
  as in \eqref{eq:phidef}.
\end{thm}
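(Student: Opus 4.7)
My approach is to treat the trace analog of the vertex--operator computation used in the proof of Theorem~\ref{thm:mainWithStanleyWeights}, along the lines of Borodin's periodic Schur process~\cite{B2007}. Combining Proposition~\ref{prop:cyltilseqbij} with Lemma~\ref{lem:gfInTermsOfOperators}, and noting that multiplying by $\prod_i x_i^{|\alpha|} = y^{|\alpha|}$ converts the exponents $|\lambda^{(i)}|-|\alpha|$ back into $|\lambda^{(i)}|$, one writes
\[
C_w \;=\; \sum_\alpha y^{|\alpha|}\, T_{w,\alpha,\alpha} \;=\; \operatorname{Tr}\bigl( O\, y^L \bigr),
\]
where $y = x_1 \cdots x_{2\ell}$, $O$ is the vertex operator product appearing in~\eqref{eq:gfInTermsOfOperators}, and $L$ is the energy operator acting diagonally by $L|\lambda\rangle = |\lambda|\,|\lambda\rangle$. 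A direct check on the definitions of the vertex operators yields the crucial conjugation rule $y^L \Gamma^\sA_\pm(t) y^{-L} = \Gamma^\sA_\pm(y^{\pm 1} t)$ for $\sA \in \{\Gamma,\Gamma'\}$.

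The trace is then evaluated in two phases. In Phase~1, moving every $\Gamma_-/\Gamma'_-$ in $O$ to the left of every $\Gamma_+/\Gamma'_+$ via~\eqref{eq:compactCommutationRelation}, exactly as in the proof of Theorem~\ref{thm:mainWithStanleyWeights}, produces the prefactor $\prod_{i<j,\, w_i=+,\, w_j=-} \varphi_{i,j}(x_i \cdots x_{j-1})$---the $k=1$ contribution to the first product in~\eqref{eq:cylgf}---and leaves $\operatorname{Tr}(G_- G_+\, y^L)$, where $G_\pm$ collect the respective operators and their internal orderings are irrelevant by the trivial commutations. In Phase~2, cyclic invariance of the trace combined with the conjugation rule gives
\[
\operatorname{Tr}(G_- G_+\, y^L) = \operatorname{Tr}(G_+\, y^L G_-) = \operatorname{Tr}\bigl(G_+\, G_-^{(y)}\, y^L\bigr),
\]
where $G_-^{(y)}$ has each $\Gamma_-$ parameter rescaled by $y$. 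Commuting $G_+$ back past $G_-^{(y)}$ then pulls out a factor $\varphi_{i,j}(y\, y_i/y_j)$ for \emph{every} pair with $w_i=+,\, w_j=-$---now including the $i>j$ pairs which did not contribute in Phase~1---and restores the form $G_-^{(y)} G_+$. Iterating $k$ times produces a factor $\varphi_{i,j}(y^k y_i/y_j)$ at each stage, while the residual trace $\operatorname{Tr}(G_-^{(y^k)} G_+\, y^L)$ tends to $\operatorname{Tr}(G_+\, y^L) = \sum_\lambda y^{|\lambda|} = \prod_{k\ge 1}(1-y^k)^{-1}$, since the $\Gamma_-$ parameters vanish and $\langle\lambda|G_+|\lambda\rangle = 1$.

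It remains to regroup factors. For pairs $i<j$, Phase~1 and the iteration combine into $\prod_{k\ge 1}\varphi_{i,j}(y^{k-1} x_i \cdots x_{j-1})$, which is the first product in~\eqref{eq:cylgf}. For pairs $i>j$ with $w_i=+,\, w_j=-$, the identity $y\cdot y_i/y_j = x_1 \cdots x_{j-1}\, x_i \cdots x_{2\ell}$ (which follows from $y_i/y_j = 1/(x_j\cdots x_{i-1})$ for $i>j$) rewrites the iterated factors as $\prod_{k\ge 1}\varphi_{i,j}\bigl(y^{k-1} x_1 \cdots x_{j-1}\, x_i \cdots x_{2\ell}\bigr)$, which after the relabeling $i\leftrightarrow j$ becomes the second product. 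The main obstacle is a standard but technical one: making the infinite cyclic iteration rigorous requires working in a suitable graded completion of formal power series so that $G_-^{(y^k)}$ tends to the identity coefficient-wise as $k\to\infty$, using that the non-trivial matrix elements of $G_-$ strictly raise partition size while its parameters are multiplied by $y$, whose $x$-expansion has positive total degree. This convergence is standard in the periodic Schur process formalism~\cite{B2007}; the hypothesis that $w$ contains both a $+$ and a $-$ enters only through Property C' of Proposition~\ref{prop:cyltilseqbij}, which ensures that $|\lambda^{(m)}|$ correctly tracks the number of flips on the $m$-th diagonal.
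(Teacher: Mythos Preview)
Your proof is correct and follows essentially the same vertex-operator trace approach as the paper: both write $C_w$ as a trace, use cyclicity together with the commutation of $\Gamma_\pm$ past the energy operator to iteratively rescale arguments by $y$, and take the limit to extract $\prod_k (1-y^k)^{-1}$; your two-phase organization (first sort $O$ into $G_-G_+$, then cycle $G_-$) is a minor reshuffling of the paper's strategy of wrapping each $\Gamma_+/\Gamma'_+$ individually around the cylinder. One small typo: the conjugation rule should read $y^L\Gamma^\sA_\pm(t)y^{-L}=\Gamma^\sA_\pm(y^{\mp 1}t)$ (with $\mp$, not $\pm$), though you apply it correctly when you say the $\Gamma_-$ parameters get rescaled by $y$.
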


\begin{rem}
  Proposition~\ref{prop:cyltilseqbij} actually shows that cylindric
  steep tilings (weighted by flips) form a \emph{periodic Schur
    process}, as defined by Borodin \cite{B2007}, and \eqref{eq:cylgf}
  is nothing but the partition function of this process.
\end{rem}

\begin{proof}
  We could obtain \eqref{eq:cylgf} as a suitable specialization of
  \cite[Proposition~1.1]{B2007}, itself a variation on \cite[I.5, Ex.\
  28(a)]{McDo}, but let us here provide a proof using vertex
  operators. By Proposition~\ref{prop:cyltilseqbij}, we have
  \begin{equation}
    C_w = \sum_\lambda y^{|\lambda|} T_{w,\lambda,\lambda}
  \end{equation}
  where $T_{w,\lambda,\lambda}$ is defined as in
  Lemma~\ref{lem:gfInTermsOfOperators}. We evaluate this quantity
  following a strategy similar to that used for the derivation of
  \eqref{eq:Fuvformplus} and \eqref{eq:Fuvformgen} in the case of
  steep tilings with free boundary conditions, but here we would like
  the cyclic symmetry to be manifest, which leads us to rewrite
  $C_w$ in a slightly different form. Let us introduce the
  \emph{energy operator} $H$ such that $H | \lambda \rangle =
  |\lambda| \times | \lambda \rangle$, so that $x^H | \lambda \rangle
  = x^{|\lambda|} | \lambda \rangle$ with $x$ a formal variable. Then,
  we have
  \begin{equation}
    \label{eq:twctr}
    C_w = \mathrm{Tr} \left[ \Gamma_{w_1}(1) (x_1)^H \Gamma'_{w_2}(1) (x_2)^H \cdots  \Gamma_{w_{2\ell-1}}(1) (x_{2\ell-1})^H \Gamma'_{w_{2\ell}}(1) (x_{2\ell})^H\right]
  \end{equation}
  where $\mathrm{Tr}[\cdot] = \sum_\lambda \langle \lambda | \cdot |
  \lambda \rangle$ is the trace. The strategy is then to move, one by
  one, each $\Gamma_+$ or $\Gamma'_+$ to the right and ``wrap'' it
  around the cylinder (using the cyclicity of the trace) until it is
  back into place. To that end, we need the following easy commutation
  relations
  \begin{equation}
    \Gamma_+(t)\, x^H = x^H \Gamma_+(t x), \qquad
    \Gamma'_+(t)\, x^H = x^H \Gamma'_+(t x),
  \end{equation}
  and the usual commutation relations \eqref{eq:gamcom}, which show
  that a factor $\varphi_{i,j}$ arises each time a
  $\Gamma_+/\Gamma'_+$ crosses a $\Gamma_-/\Gamma'_-$, with an
  argument equal to the product of the $x_k$'s between them, and
  furthermore that the argument of each $\Gamma_+/\Gamma'_+$ is
  multiplied by $y$ after one turn. In other words we have
  \begin{multline}
    C_w =  \prod_{\substack{1 \leq i < j \leq 2\ell \\ w_i=+,\ w_j=-}}
    \varphi_{i,j}(x_i x_{i+1} \cdots x_{j-1}) \times \\
    \prod_{\substack{1 \leq i < j \leq 2\ell \\ w_i=-,\ w_j=+}}
    \varphi_{i,j}(x_1 x_2 \cdots x_{i-1} x_j x_{j+1} \cdots
    x_{2\ell}) \times \widetilde{C}_w(y)
  \end{multline}
  where $\widetilde{C}_w(y)$ is obtained from the right hand side of
  \eqref{eq:twctr} by replacing the argument of each
  $\Gamma_+/\Gamma'_+$ by $y$. We may then repeat the same strategy
  $k$ times, pulling more factors times $\widetilde{C}_w(y^k)$. But
  then it is readily seen that
  \begin{equation}
    \lim_{k \to \infty} \widetilde{C}_w(y^k) =
    \mathrm{Tr}\left[ (x_1 \cdots x_{2\ell})^H \right] =
    \prod_{k \geq 1} \frac{1}{1-y^k}
  \end{equation}
  from the (already noted) fact that $\lim \Gamma_+(y^k) = \lim
  \Gamma'_+(y^k) = 1$ as $k \to \infty$ and that the
  $\Gamma_-/\Gamma'_-$ are ``upper unitriangular''. The wanted
  expression \eqref{eq:cylgf} follows.
\end{proof}

\section{An extended model: interpolation between plane partitions and domino tilings}\label{sec:extendedmodel}

In this section we define an extended model that is more general than steep tilings. The model gives an interpretation of any sequence of partitions interlaced with relations in $\{\prec, \succ, \prec', \succ'\}$ in terms of 
perfect matchings of some infinite planar graph (which also gives an
interpretation in terms of tilings, see Section~\ref{sec:extendedtilings}). The model contains both steep tilings and plane partitions as special cases. For simplicity we only deal with the case of pure boundary conditions, but there is no doubt that we may also treat more general (mixed, free, periodic) ones.

Let $k \geq 1$, and let 
$\diamond\in\{\succ,\prec,\succ',\prec'\}^{k}$ be a word.
We consider the set $\mathcal{S}_\diamond$ made by sequences of partitions interlaced according  to $\diamond$:
\begin{equation}
  \mathcal{S}_\diamond := \big\{ (\lambda^{(0)}, \lambda^{(1)}, \dots, \lambda^{(k)}), \text{ for all } 1 \leq i \leq k, \lambda^{(i-1)}\diamond_i \lambda^{(i)}\big\}.
\end{equation}
Proposition~\ref{prop:tilseqbij} shows that if $k$ is even, and if $\diamond_i\in \{\prec, \succ\}$ (resp.\ $\diamond_i\in \{\prec', \succ'\}$) when $i$ is odd (resp.\ even), elements of $\mathcal{S}_\diamond$ are in bijection with steep tilings of a given asymptotic data.
We will generalize the construction to words $\diamond$ that do not satisfy this condition.

In order to present the construction, it is convenient to make a step from the
world of domino tilings to the world of matchings. Recall that a
\emph{matching} of a graph is a subset of disjoint edges. A matching is
\emph{perfect} if it covers all the vertices. It is well-known (and clear) that
domino tilings of a region of $\mathbb{Z}^2$ made by a union of unit squares
are in bijection with perfect matchings of its dual graph.

The flip operation has a natural description in terms of matchings, that can be generalized as follows. Let $G$ be a bipartite plane graph, let $\mathfrak{t}$ be a matching of $G$, and let $f$ be a bounded face of $G$ bordered by $2p$ edges, for some $p\geq 1$. If $p$ of these edges belong to the matching  $\mathfrak{t}$, we can remove these edges from $\mathfrak{t}$ and replace them by the other $p$ edges bordering $f$, thus creating a new matching $\mathfrak{t}'$ of $G$. This operation is called \emph{a flip}, see Figure~\ref{fig:matchingflip}. 

\begin{figure}[htpb]
  \centering
  \includegraphics[width=\textwidth]{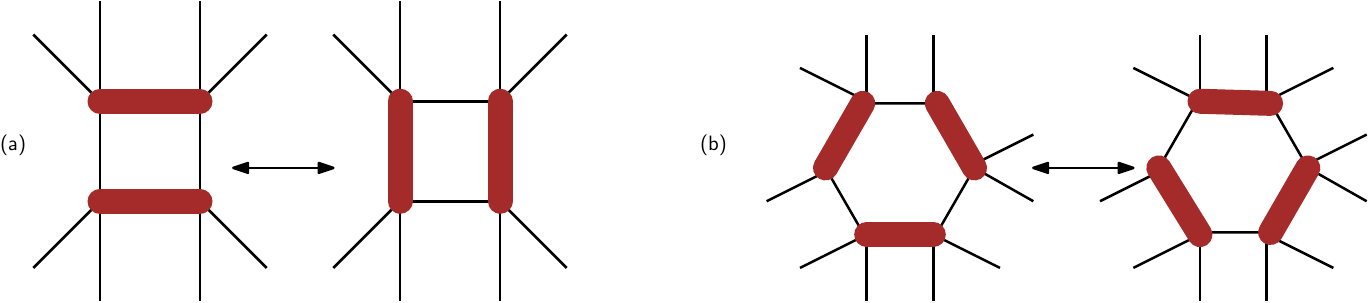}
 \caption{The flip operation around a face; (a) face of degree 4; (b) face of degree 6. Edges belonging to the matching are represented as bold.}
  \label{fig:matchingflip}
\end{figure}

\medskip

We now state the main result of this section. An element of $\mathcal{S}_\diamond$ is \emph{pure} if it is such that $\lambda^{(0)}=\lambda^{(k)}=\emptyset$.
\begin{thm}\label{thm:extendedBij}
Let $k\geq 1$ and $\diamond \in \{\prec, \succ, \prec', \succ'\}^k$. There exists an infinite plane graph ${G}_\diamond$, and a matching $\mathfrak{m}_\diamond$ of ${G}_\diamond$, called \emph{minimal}, such that pure elements of $\mathcal{S}_\diamond$ are in bijection with matchings of $G_\diamond$ that can be obtained from $\mathfrak{m}_\diamond$ by a finite sequence of flips.

The bijection maps a sequence $\vec{\lambda}=(\lambda^{(0)}, \lambda^{(1)}, \dots, \lambda^{(k)})$ to a matching $\phi(\vec{\lambda})$ such that the the minimum number of flips needed to obtain $\phi(\vec{\lambda})$ from $\mathfrak{m}_\diamond$ is equal to $\sum_{i=0}^{k} |\lambda^{(i)}|$.
\end{thm}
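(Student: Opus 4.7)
The plan is to build $G_\diamond$ and $\mathfrak{m}_\diamond$ column by column. For each $i \in \{0,1,\ldots,k\}$ I associate a vertical line of vertices encoding positions on the Maya diagram of $\lambda^{(i)}$; between two consecutive lines (indexed $i-1$ and $i$), I place a transition region whose faces depend on the types of $\diamond_i$ together with its neighbors $\diamond_{i-1}$ and $\diamond_{i+1}$. Whenever consecutive symbols have \emph{different} conjugation types (one in $\{\prec,\succ\}$ and the other in $\{\prec',\succ'\}$), the transition region is built from degree-$4$ quadrilateral faces, so that locally we recover the domino/matching picture of Proposition~\ref{prop:tilseqbij}. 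Whenever consecutive symbols have the \emph{same} conjugation type, I instead use degree-$6$ hexagonal faces, as occur for lozenge tilings of the triangular lattice, whose flips are precisely the box-moves on plane partitions. The minimal matching $\mathfrak{m}_\diamond$ is then the ``vacuum'' matching corresponding to the constant sequence $(\emptyset,\ldots,\emptyset)$, specified locally on each column and each transition region.

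The bijection $\phi$ sends a sequence $\vec\lambda=(\lambda^{(0)},\ldots,\lambda^{(k)})$ to the unique matching of $G_\diamond$ whose restriction to the $i$-th vertical line encodes the Maya diagram of $\lambda^{(i)}$, and whose restriction to the $i$-th transition region is the unique matching compatible with $\lambda^{(i-1)}\diamond_i\lambda^{(i)}$. Existence and uniqueness of such compatible local matchings is precisely the standard Maya-diagram description of horizontal and vertical strips in the quadrilateral case, and an elementary local check in the hexagonal case. The inverse map reads off $\lambda^{(i)}$ from the matching by restricting to the $i$-th vertical line, and the interlacing $\diamond_i$ is then forced by the shape of the transition region.

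The flip-distance formula will follow from a strictly local analysis. A flip around a face of $G_\diamond$---whether of degree $4$ or $6$---modifies the Maya diagram of exactly one $\lambda^{(i)}$ by moving a single particle by one unit, so it changes $|\lambda^{(i)}|$ by $\pm 1$ and leaves the other partitions untouched. Since $\mathfrak{m}_\diamond$ corresponds to $\sum_i |\lambda^{(i)}|=0$, this immediately yields the lower bound $\sum_i |\lambda^{(i)}|$ on the number of flips needed to reach $\phi(\vec\lambda)$. For the matching upper bound I will exhibit an explicit sequence of ``ascendent'' flips of exactly this length, for instance by adding boxes one at a time in an order on the cells of the $\lambda^{(i)}$'s that is compatible with the interlacings (so that each intermediate sequence remains in $\mathcal{S}_\diamond$); the existence of such an order is a direct consequence of the fact that ascendent flips exhaust all elements of $\mathcal{S}_\diamond$, exactly as in the steep case.

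The main obstacle is giving a uniform, unambiguous description of the transition regions and of $\mathfrak{m}_\diamond$, and then verifying that every flip of $G_\diamond$ really is transported by $\phi$ to an elementary box move on a single $\lambda^{(i)}$. In the alternating ``steep'' case this was the job of the Thurston-type height function of Section~\ref{sec:bijection}; here $G_\diamond$ is no longer the dual of a simply-connected planar region with a standard height function, so the argument must rely on the face-by-face local check outlined above, bootstrapped to the global statement by the Maya-diagram bookkeeping on each column. Specialization to the alternating case must recover Proposition~\ref{prop:tilseqbij}, which provides a useful consistency check and, ultimately, the enumerative bridge to Section~\ref{sec:extreduc}.
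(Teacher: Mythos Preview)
Your overall architecture matches the paper's: construct $G_\diamond$ column by column with Maya-diagram columns $V_j$ and intermediate ``transition'' columns, define the bijection $\Psi$ via Maya diagrams, check that a flip moves exactly one particle by one step (so $\sum_i|\lambda^{(i)}|$ is a lower bound on flip distance), and then argue that this bound is attained. Two points deserve comment.

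First, your description of the transition regions is imprecise. In the paper's construction the intermediate column $W_j$ depends only on whether $\diamond_j\in\{\prec,\succ\}$ or $\diamond_j\in\{\prec',\succ'\}$, not on $\diamond_{j-1}$ or $\diamond_{j+1}$; the resulting faces of $G_\diamond$ can have degree $4$, $6$, or $8$ (the degree-$8$ faces occur exactly when $\diamond_j\in\{\prec,\succ\}$ and $\diamond_{j+1}\in\{\prec',\succ'\}$, and are later removed by contracting the degree-$2$ vertices of $V_j$). Your claim that only degree-$4$ and degree-$6$ faces occur is therefore either wrong for the paper's graph or describes a different graph that you have not actually specified. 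This is fixable, but you must commit to a concrete construction before the flip analysis can even be stated.

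Second, and more seriously, your upper-bound argument is circular. You propose to reach $\phi(\vec\lambda)$ from $\mathfrak{m}_\diamond$ by ``adding boxes one at a time in an order compatible with the interlacings'', and justify the existence of such an order by saying it ``is a direct consequence of the fact that ascendent flips exhaust all elements of $\mathcal{S}_\diamond$, exactly as in the steep case''. But that exhaustion statement is precisely what you are trying to prove, and in the steep case it was established via the Thurston height function---a tool you yourself note is unavailable here. What is actually needed is: for any pure $\vec\lambda\neq(\emptyset,\ldots,\emptyset)$ in $\mathcal{S}_\diamond$, there exists a box that can be removed from some $\lambda^{(j)}$ while remaining in $\mathcal{S}_\diamond$, \emph{and} this removal is realized by a flip of a face of $G_\diamond$. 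The paper proves exactly this lemma by introducing an ad hoc height function $h(F)=2h_\bullet(F)+h_\circ(F)$ on faces (counting left-matched vertices above and right-matched vertices below), selecting a face maximizing the reduced height with carefully chosen tie-breaking rules, and then performing a case analysis over the possible local face types to show that face is flippable. Your ``face-by-face local check'' gestures at this but does not supply it; without it the theorem is unproved.
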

\begin{rem}
We will actually see a more precise result. Namely, the graph ${G}_\diamond$ and its embedding in the plane will be such that each flip has a well defined \emph{abscissa}, which will be of the form $\frac{3}{2}j$ for some $j\in\{0,1,\dots,k\}$.
Then the following will be true: the number of flips at abscissa $\frac{3}{2}j$ in any shortest sequence of flips from $\mathfrak{m}_\diamond$ to $\phi(\vec\lambda)$ is independent of the sequence, and is equal to $|\lambda^{(j)}|$. This will lead us to analogues of Theorem~\ref{thm:mainWithStanleyWeights} in the general setting (see Theorem~\ref{thm:extendedCountingWithStanleyWeights} below).
\end{rem}

\subsection{The graph $G_\diamond$, and admissible matchings.}
\label{sec:admmatch}

We now start the proof of Theorem~\ref{thm:extendedBij}.  We
first construct a graph $G_\diamond$, which is a bipartite graph embedded in
the plane (Figure~\ref{fig:extendedgraph}). The vertex set $V$ of $G_\diamond$ is defined by:
\begin{equation}
  V = \bigcup_{i=0}^k V_j \cup \bigcup_{i=1}^{k} W_j,
\end{equation}
where for $0\leq j \leq k$, 
$V_j = \{(\frac{3}{2}j,y),y\in\mathbb{Z}+\frac{j+1}{2}\}$, 
and where for $1\leq j \leq k$
\begin{equation}
  W_j =
  \begin{cases}
    \{(\frac{3}{2}j-1,y),y\in\mathbb{Z}+\frac{j+1}{2}\},& \text{if } \diamond_j\in\{\prec, \succ\}, \\
    \{(\frac{3}{2}j-\frac{1}{2},y),y\in\mathbb{Z}+\frac{j}{2}\},& \text{if } \diamond_j\in\{\prec', \succ'\}.
  \end{cases}
\end{equation}
We then add an edge of $G_\diamond$ between any two vertices of $V$ which
differ by a vector $(1,0)$, $(\frac{1}{2}, \frac{1}{2})$, or $(\frac{1}{2},-\frac{1}{2})$. See Figure~\ref{fig:extendedgraph}.
Note that the graph $G_\diamond$ does not characterize the word $\diamond$: indeed the symbols $\prec, \succ$ ($\prec', \succ'$, respectively) play the same role in the construction.

\medskip

In view of defining the  matchings we are interested in, we first need to
define a function $Y$ that will play the role a zero ordinate, local to each
``column'' of $G_\diamond$.
To this end, define $x_0=0$, and for $1\leq j \leq k$ let $x_{2k-1}$ (resp.
$x_{2k}$) be the common abscissa of all vertices in $W_{k}$ (resp.
$V_k$). Then $x_0< x_1<\dots< x_{2k}$ are all the abscissas of vertices appearing in $G_\diamond$. 
We define the  function $Y: \{x_0,x_1,\dots,x_{2k}\} \rightarrow
\frac{1}{2}\mathbb{Z}$ by the fact that $Y(x_0)=0$ and for $1\leq i \leq k$:
\begin{eqnarray}
Y(x_{2i})=\begin{cases}
    Y(x_{2i-2})+\frac12& \text{if } \diamond_j\in\{\prec, \succ'\}, \\
    Y(x_{2i-2})-\frac12& \text{if } \diamond_j\in\{\prec', \succ\}, 
  \end{cases}\label{eq:defY1}
\end{eqnarray}
\begin{eqnarray}
Y(x_{2i-1})=\begin{cases}
    Y(x_{2i})& \text{if } \diamond_j\in\{\prec, \succ\}, \\
    Y(x_{2i-2})& \text{if } \diamond_j\in\{\prec', \succ'\}.
  \end{cases}\label{eq:defY2}
\end{eqnarray}
Strictly speaking, only the values $Y(x_{2i})$ are needed in our construction, but defining $Y$ on all $x_i$'s
enables one to represent it easily on pictures, see Figure~\ref{fig:extendedgraph}.

\begin{figure}[htpb]
  \centering
  \includegraphics[scale=0.7]{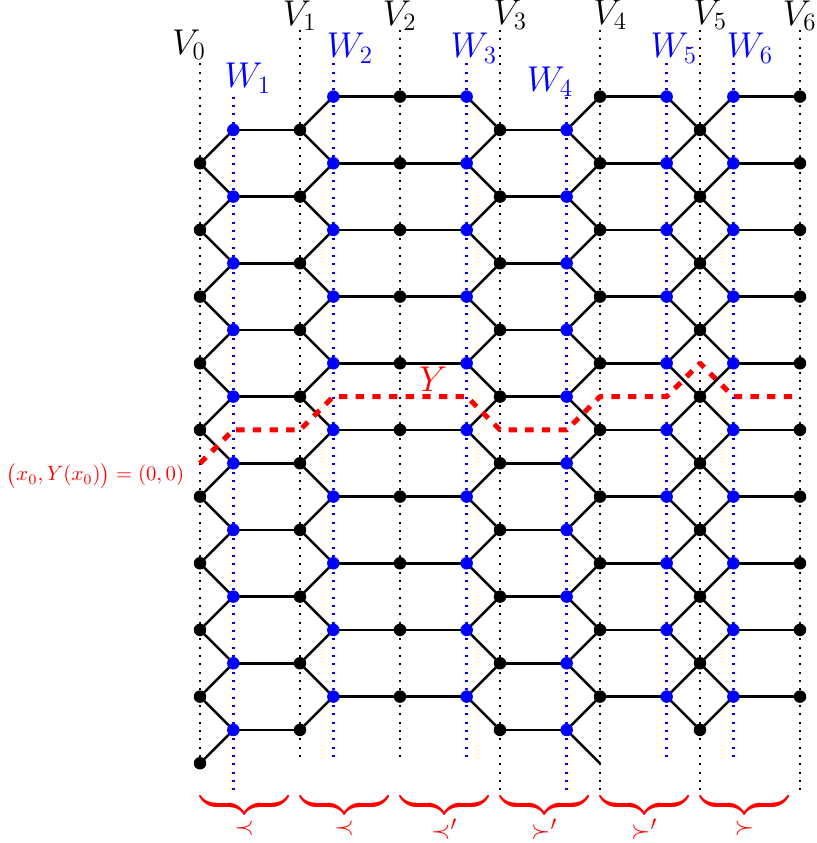}
 \caption{The graph $G_\diamond$ for $\diamond =
\prec\prec\prec'\succ'\succ'\succ$ (and $k=6$).
Only a bounded portion is shown, the actual graph being infinite towards top
and bottom. Vertical dotted lines are there to help visualize the vertex
sets $V_0,W_1,\dots, V_k,W_k$,  and they are not part of the graph. The red dotted path
is not part of the graph either, and represents the path $(x_j,Y(x_j))_{0\leq
j\leq 2k}$. In particular its leftmost point is the origin $(0,0)$
of the coordinate system. } 
  \label{fig:extendedgraph}
\end{figure}

\medskip
All the matchings of $G_\diamond$ that we will consider are such that all the
vertices of $V\setminus (V_0 \cup V_{k})$ are covered by the matching. If $v$
is a vertex of $V$, we will then say that $v$ is matched \emph{to the left}
(resp.\ \emph{to the right}), if either $v$ is covered and the matching connects
it to a vertex to its left (resp.\ to its right), or if $v$ is uncovered and
belongs to $V_0$ (resp.\ to $V_k$). Note that this definition is consistent if one
imagines that uncovered vertices are matched towards the exterior of the graph.
A matching of $G_\diamond$ is \emph{admissible} if it is such that all for each
$0\leq j\leq k$, the number of vertices in $V_j$ above the ordinate $Y(x_{2j})$
that are matched to the left is finite, and if this number equals the number of
vertices in $V_j$ below the ordinate $Y(x_{2j})$ that are matched to the right.
See Figure~\ref{fig:extendedmatching}.

\begin{figure}[htpb]
  \centering
  \includegraphics[scale=0.7]{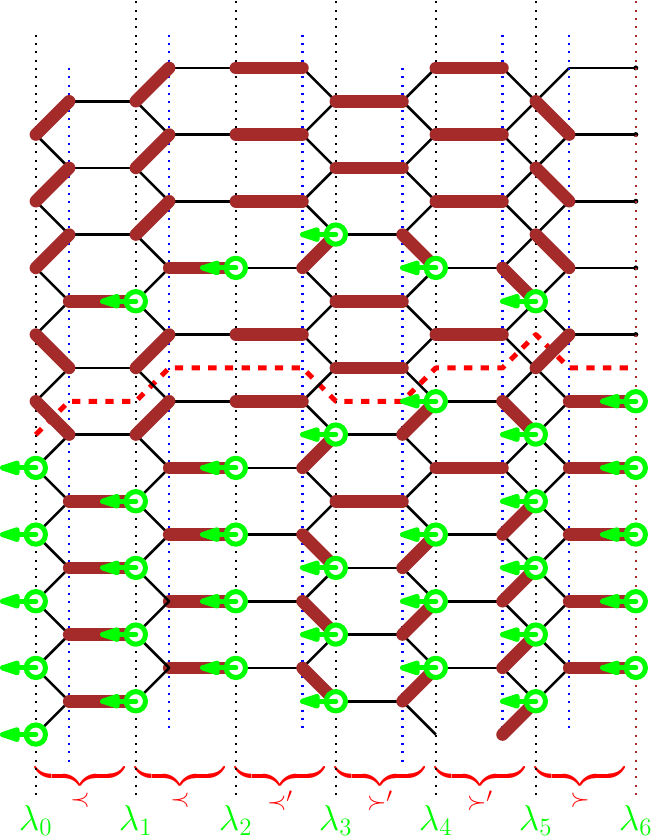}
 \caption{An admissible matching of the graph of Figure~\ref{fig:extendedgraph}. Edges in the matching are bold (and
brown). To help visualize, vertices in $V_0\cup V_1 \cup \dots \cup V_k$ that are
matched to the left are indicated by circled (green) left-pointing arrows.
Outside the displayed region, the matching continues periodically along the $y$
direction towards top and bottom. For each $j\in\{0,1,\dots,k\}$, the
bijection $\Psi$ consists in interpreting the circled vertices at abscissa $\frac{3}{2}j$ as the Maya diagram
of a partition. This example corresponds to the partitions
$\lambda^{(0)}=\emptyset$, 
$\lambda^{(1)}=\lambda^{(2)}=(2)$, 
$\lambda^{(3)}=(3,1)$, 
$\lambda^{(4)}=(2,1)$, 
$\lambda^{(5)}=(1)$, 
$\lambda^{(6)}=\emptyset$.
}
  \label{fig:extendedmatching}
\end{figure}

\medskip
Let $\mathfrak{m}$ be an admissible matching of $G_\diamond$, and let $j\in\{0,1,\dots,k\}$. Consider all the vertices of $V_j$ that are matched to the left, from top to bottom, and let $y_{1,j}> y_{2,j}> \dots$ be their ordinates.
Then the admissibility condition ensures that the nonincreasing integer sequence 
\begin{eqnarray}\label{eq:extendedinterlacing}
\lambda^{(j)}_i:=y_{i,j}-Y(x_{2j})+i-\frac{1}{2}
\end{eqnarray}
vanishes for $i$ large enough, i.e.\ that $\lambda^{(j)}$ is an integer
partition. One can interpret this construction by noting that if one considers
vertices in $V_j$ that are matched to the left (resp.\ to the right) as occupied
sites (resp.\ empty sites), then $V_j$, when read from bottom to top, is the
Maya diagram of the partition $\lambda^{(j)}$, see
Figure~\ref{fig:extendedmatching}.
 We let $\Psi(\mathfrak{m}):=(\lambda^{(0)}, \lambda^{(1)}, \dots, \lambda^{(k)})$  be the tuple of partitions thus defined.
\begin{prop}\label{prop:extendedadmissible}
The mapping $\Psi$ is a bijection between admissible matchings of $G_\diamond$
and elements of $\mathcal{S}_\diamond$.
\end{prop}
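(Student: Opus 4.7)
The plan is to establish the bijectivity of $\Psi$ by a local analysis of the matchings around each ``interlacing gadget'' formed by three consecutive columns $V_{j-1}, W_j, V_j$. First I would check that if $\mathfrak{m}$ is an admissible matching then $\Psi(\mathfrak{m})=(\lambda^{(0)},\dots,\lambda^{(k)})$ lies in $\mathcal{S}_\diamond$, i.e.\ the interlacing $\lambda^{(j-1)}\diamond_j\lambda^{(j)}$ holds for every $1\leq j\leq k$. Since the only edges of $G_\diamond$ connect $V_{j-1}$ to $W_j$ or $W_j$ to $V_j$ (a short check using the three allowed edge vectors), this global assertion splits into $k$ independent local ones, one per slice. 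Moreover, the obvious top/bottom reflection symmetry of $G_\diamond$ exchanges $\prec\leftrightarrow\succ$ and $\prec'\leftrightarrow\succ'$, so I only need to treat, say, the cases $\diamond_j=\prec$ and $\diamond_j=\prec'$.

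In the case $\diamond_j=\prec$ the columns $W_j$ and $V_j$ share their ordinate set $\mathbb{Z}+\frac{j+1}{2}$, so each $W_j$ vertex has a unique horizontal partner in $V_j$ and exactly two diagonal partners in $V_{j-1}$ (at ordinates $\pm\tfrac{1}{2}$ away). Since every $W_j$ vertex must be covered, the $W_j$ vertices matched horizontally are precisely those whose $V_j$ partner is matched to the left, i.e.\ the positions of the occupied sites of the Maya diagram of $\lambda^{(j)}$, while the remaining $W_j$ vertices must be matched by diagonals to $V_{j-1}$ vertices matched to the right (the empty sites of $\lambda^{(j-1)}$). Planarity of $G_\diamond$ forces this diagonal pairing to be non-crossing, and together with the $\tfrac{1}{2}$-shift $Y(x_{2j})-Y(x_{2j-2})=+\tfrac{1}{2}$ dictated by~\eqref{eq:defY1}, the parameter rule~\eqref{eq:extendedinterlacing} translates this non-crossing pairing into the horizontal-strip condition $\lambda^{(j-1)}\prec\lambda^{(j)}$. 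The case $\diamond_j=\prec'$ is entirely analogous with horizontal and diagonal edges interchanged, the roles of $\lambda^{(j-1)}$ and $\lambda^{(j)}$ partially swapped through the primed Maya-diagram convention, and the shift becoming $-\tfrac{1}{2}$, producing the vertical-strip condition $\lambda^{(j-1)}\prec'\lambda^{(j)}$.

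For the inverse, given $(\lambda^{(0)},\dots,\lambda^{(k)})\in\mathcal{S}_\diamond$ I construct $\mathfrak{m}$ slice by slice. The partitions determine, via~\eqref{eq:extendedinterlacing} and the values $Y(x_{2j})$, the occupied/empty state of every vertex of $V_{j-1}$ and $V_j$. In the slice $V_{j-1}\cup W_j\cup V_j$, each $W_j$ vertex with an ``available'' horizontal partner (one already forced to be matched towards $W_j$) is matched there, which leaves precisely the $W_j$ vertices that must be matched diagonally; their partners form a finite set on the other side whose cardinality matches by the interlacing hypothesis, and planar non-crossingness pins down the diagonal matching uniquely. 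Gluing all slices gives a well-defined matching, which is easily seen to be admissible because each $\lambda^{(j)}$ has finite support, and to satisfy $\Psi(\mathfrak{m})=(\lambda^{(0)},\dots,\lambda^{(k)})$ by construction. Injectivity of $\Psi$ follows from the same local uniqueness.

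The main obstacle is the four-case analysis of the interlacing gadget: one must verify that the shifts prescribed by~\eqref{eq:defY1} and~\eqref{eq:defY2} and the horizontal/diagonal orientation of the edges between $V_{j-1},W_j,V_j$ conspire to yield each of the four interlacing relations with the correct direction (added versus removed strip, horizontal versus vertical). This is a routine but careful bookkeeping exercise, best organized around the transformation on Maya diagrams induced by a single forced horizontal edge and a subsequent ``drift'' by $\pm\tfrac{1}{2}$ along diagonals. Apart from this, the only other point worth recording is that admissibility transfers cleanly in both directions, since on both sides of the bijection it encodes the same finiteness condition on the supports of the $\lambda^{(j)}$.
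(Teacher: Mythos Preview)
Your approach is essentially the paper's: both proceed by a local, slice-by-slice analysis of the three columns $V_{j-1},W_j,V_j$, checking that the matching constraints on $W_j$ force the interlacing relation $\lambda^{(j-1)}\diamond_j\lambda^{(j)}$, and that conversely the partitions determine the matching uniquely (horizontal edges first, then diagonals). The paper phrases the forward direction as a counting argument (between two consecutive occupied sites of $V_{j-1}$ exactly one $W_j$-vertex is matched to the right, hence exactly one $V_j$-vertex is matched to the left there), while you phrase it via non-crossing pairings; these are equivalent.

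One small point does not work as written: the top/bottom reflection of $G_\diamond$ does not exchange $\prec\leftrightarrow\succ$ and $\prec'\leftrightarrow\succ'$ on admissible matchings. Reflecting $y\mapsto -y$ preserves the graph and preserves ``matched to the left'', but it sends a Maya diagram (finitely many occupied sites above the reference line, finitely many empty sites below) to the complementary pattern, which is not admissible. So this reflection is not an involution on the set of admissible matchings. The paper sidesteps this by treating $\prec$ and $\succ$ simultaneously: the local graph for $\diamond_j\in\{\prec,\succ\}$ is the same, the argument shows the two partitions are interlaced, and the direction is then read off from the sign of $Y(x_{2j})-Y(x_{2j-2})$ via~\eqref{eq:defY1} by looking at large~$i$. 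Your argument is easily repaired in the same way, since as you note the slice analysis for $\prec$ and $\succ$ (resp.\ $\prec'$ and $\succ'$) only differs by the $\pm\tfrac12$ shift.
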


\medskip

\begin{proof}
Let us first check that $\Psi(\mathfrak{m})$ belongs to
$\mathcal{S}_\diamond$. 
 Let $j\in\{1,2,\dots,k\}$, and suppose first that
$\diamond_j\in\{\prec,\succ\}$. We claim that for $i\geq 1$, there is exactly
one vertex of $V_{j}$ whose ordinate lies between
$y_{i,j-1}$ and $y_{i+1,j-1}$. Indeed, by construction, there are
$(y_{i,j-1}-y_{i+1,j-1})$ vertices in $W_{j}$ whose ordinates are in this interval,
and exactly $(y_{i,j-1}-y_{i+1,j-1} -1)$ matching edges are coming to these
vertices from $V_{j-1}$.
Thus exactly one of these vertices is matched to the right. Since a vertex in
$W_j$ is matched to the right if and only if its unique neighbour in $V_j$ is
matched to the left, this  proves the
claim.
Equivalently, there is a unique $i'\geq 1$
such that $y_{i,j-1}>y_{i',j}>y_{i+1,j-1}$, which shows that the partitions
$\lambda^{(j-1)}$ and $\lambda^{(j)}$ are interlaced, i.e.\ either
$\lambda^{(j-1)}\prec \lambda^{(j)}$ or $\lambda^{(j-1)}\succ\lambda^{(j)}$. To
determine which case we are in, take $i$ large enough such that
$\lambda^{(j-1)}_i=\lambda^{(j)}_i=0$. Then~\eqref{eq:extendedinterlacing} and
the definition of $Y$ show that $y_{i,j-1}-y_{i,j}$ is positive
(resp.\ negative) if $\diamond_j=\ \succ$ (resp.\ $\diamond_j=\ \prec$).
This proves that $\lambda^{(j-1)}\diamond_j\lambda^{(j)}$ in both cases.
The case where $\diamond_j\in\{\prec',\succ'\}$ is handled exactly in the same
way, and we leave the reader check that
$\lambda^{(j-1)}\diamond_j\lambda^{(j)}$ in this case as well, thus
proving that $(\lambda^{(0)}, \lambda^{(1)}, \dots, \lambda^{(k)})$
belongs to $\mathcal{S}_\diamond$.

Now let $\vec\lambda=(\lambda^{(0)}, \lambda^{(1)}, \dots, \lambda^{(k)}) \in
\mathcal{S}_\diamond$. We will prove that there exists a unique admissible
matching of $G_\diamond$ such that for each $j$, the elements of $V_j$ that are
matched to the left are exactly the ones of ordinate
$\lambda^{(j)}_i+Y(x_{2j})-i+\frac{1}{2}$ for some $i\geq 1$.
First note that imposing which elements of $V_j$ are matched to the left for
all $j\in\{0,1,\dots,k\}$ determines uniquely the matching on horizontal edges
(indeed any horizontal edge is incident to such a vertex). One thus has to show
that, once these horizontal edges have been placed, there is a unique way of
adding diagonal edges to the matching in order to
make it admissible and respect the condition on left-matching vertices. This
can be easily checked, case by case, distinguishing as before according to the
value of $\diamond_j\in\{\prec, \succ, \prec', \succ'\}$. We leave it to the
reader.
\end{proof}

\subsection{The pure case, and flips}
\label{sec:extpure}
In the general case where $\lambda^{(0)}$ and $\lambda^{(k)}$ are arbitrary, we
will not go further than Proposition~\ref{prop:extendedadmissible}. We now focus on
the pure case, i.e.\ we assume $\lambda^{(0)}=\lambda^{(k)}=\emptyset$.
First, we define \emph{the minimal matching} $\mathfrak{m}_\diamond$ as the
image of $(\emptyset, \emptyset, \dots, \emptyset)$ by the bijection $\Psi^{-1}$.
See Figure~\ref{fig:extendedminimal}. 

In order to prove Theorem~\ref{thm:extendedBij}, we first  note that if $F$ is a bounded face of $G_\diamond$, there is exactly one $j\in \llbracket1,k\rrbracket$ such that two vertices of $V_j$ belong to $F$. Moreover, if the flip of the face $F$ is possible, these two vertices are matched in different directions (one to the left, the other one to the right), and the flip exchanges these directions between the two vertices. It follows that the effect of a flip on the Maya diagram is to make exactly one particle jump by one position. We define a flip to be \emph{ascendent} or \emph{descendent}  according to whether the particle jumps to the top or to the bottom, respectively. Moreover, we define  of the \emph{abscissa} of the flip to be the abscissa of this particle, i.e.\ $\frac{3}{2}j$.
Since the jump of one particle to the top increases the quantity $\sum_{i=0}^k |\lambda^{(i)}|$ by exactly one, Theorem~\ref{thm:extendedBij} is thus a direct consequence of the following lemma:
\begin{lem}
Let $\mathfrak{m}$ be a pure admissible matching which is different from the minimal one. Then it is possible to perform a descendent flip on $\mathfrak{m}$.
\end{lem}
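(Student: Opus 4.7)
The plan is to exploit the interpretation given just before the statement: via the bijection $\Psi$, the matching $\mathfrak{m}$ corresponds to an interlaced sequence $\vec{\lambda}=(\lambda^{(0)},\dots,\lambda^{(k)})$ with $\lambda^{(0)}=\lambda^{(k)}=\emptyset$ and $\sum_j|\lambda^{(j)}|>0$, and a descendent flip of $\mathfrak{m}$ amounts to the removal of a corner cell from some $\lambda^{(j_0)}$ (with $1\le j_0\le k-1$) such that the two interlacings with $\lambda^{(j_0-1)}$ and $\lambda^{(j_0+1)}$ remain valid after the removal. Thus it is enough to exhibit one such cell and such a $j_0$.

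I would first pick an \emph{extremal} cell $c=(r,s)$ appearing in some $\lambda^{(j)}$ but such that neither $(r+1,s)$ nor $(r,s+1)$ appears in any $\lambda^{(j')}$; for instance, take $c$ maximizing $r+s$ over all cells occurring somewhere in the sequence. Such a $c$ exists since at least one partition is nonempty, and extremality implies at once that $c$ is a corner of every $\lambda^{(j)}$ containing it. Let $J:=\{j:c\in\lambda^{(j)}\}$; purity forces $0,k\notin J$, so $J\subseteq\{1,\dots,k-1\}$.

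Next I would analyze, for $j_0\in J$, when removing $c$ from $\lambda^{(j_0)}$ preserves each adjacent interlacing. The key observation is that extremality of $c$ forbids any cell $(r',s)$ with $r'>r$ or $(r,s')$ with $s'>s$ from appearing in any partition of the sequence, while cells with $r'<r$ in column $s$ or $s'<s$ in row $r$ are automatically present in every partition containing $c$. A short case check over the four symbols of $\{\prec,\succ,\prec',\succ'\}$ then shows that the horizontal/vertical strip condition is automatically preserved after the removal, so that the only possible obstructions are containment failures. Explicitly, the removal at $j_0$ is legal if and only if ($\diamond_{j_0}\in\{\succ,\succ'\}$ or $j_0-1\notin J$) and ($\diamond_{j_0+1}\in\{\prec,\prec'\}$ or $j_0+1\notin J$).

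Finally, to exhibit a legal $j_0$, I would fix a maximal interval $[a,b]\subseteq J$ (so that $a-1,b+1\notin J$), and let $j_0$ be the smallest $j\in[a,b]$ satisfying either $j=b$ or $\diamond_{j+1}\in\{\prec,\prec'\}$. The second legality condition then holds by the very choice of $j_0$. For the first, if $j_0=a$ then $j_0-1=a-1\notin J$; if $j_0>a$, the minimality of $j_0$ forces $\diamond_{a+1},\dots,\diamond_{j_0}\in\{\succ,\succ'\}$, so in particular $\diamond_{j_0}\in\{\succ,\succ'\}$. In either case $c$ may legally be removed from $\lambda^{(j_0)}$, giving the required descendent flip. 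The main technical hurdle is the four-case check in the previous step, but it is essentially mechanical once one notes that extremality of $c$ supplies exactly the hypothesis needed to preserve the strip conditions.
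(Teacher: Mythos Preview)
Your argument is correct and takes a genuinely different route from the paper's.

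The paper proceeds geometrically on the graph $G_\diamond$: it defines an ad hoc height function $h(F)=2h_\bullet(F)+h_\circ(F)$ on faces, selects a face $F$ by a three-level lexicographic maximization (first of the reduced height $h-h_\emptyset$, then of $h$, then of a quantity $d(F)-u(F)$ tied to the word $\diamond$), and finally checks case by case over the possible face types that this $F$ admits a descendent flip.

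You instead stay entirely on the partition side of the bijection $\Psi$: pick a cell $c$ of maximal $r+s$ in the whole sequence, so that $c$ is automatically a corner of every $\lambda^{(j)}$ containing it and no cell to its right or below appears anywhere; then inside a maximal interval $[a,b]$ of indices containing $c$, pick the first $j_0$ at which the word ``turns up'' (or $j_0=b$). Your legality criterion --- removal is obstructed only by a containment failure, never by the strip condition thanks to extremality of $c$ --- is exactly right, and your choice of $j_0$ cleanly defeats both obstructions. This is more elementary than the paper's height-function machinery and makes the role of the interlacing conditions very transparent; the paper's approach, on the other hand, connects more directly to Propp's general lattice theory of flips.

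One point worth making explicit: you use that a legal corner removal in $\mathcal{S}_\diamond$ is realized by a single descendent flip of $\mathfrak{m}$. The paper only records the forward implication (a flip induces a single particle jump). The converse --- that if $\vec\lambda$ and $\vec\lambda'$ differ by one cell and both lie in $\mathcal{S}_\diamond$, then $\Psi^{-1}(\vec\lambda)$ and $\Psi^{-1}(\vec\lambda')$ differ by the boundary of a single face --- follows from the locality of the inverse construction in the proof of Proposition~\ref{prop:extendedadmissible} (the matching between columns $V_{j-1}$ and $V_j$ depends only on the left/right data there), but it would strengthen the write-up to say so in a sentence rather than leave it implicit.
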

\begin{proof}
This is a consequence of the general theory developed by Propp \cite{Propp1993}, but let us here provide a self-contained argument. 
First to each admissible matching we associate a height function that associates to each bounded face $F$ of the graph $G_\diamond$ a value $h(F)\in \mathbb{Z}$ as follows. 
Let $F$ be a bounded face of $G_\diamond$, and consider the unique $j=j(F)$ such that $V_j$ has two vertices incident to $F$. Let $v_0$ be the midpoint between these two vertices. 
We define $h(F):=2h_\bullet(F)+h_\circ(F)$ where $h_\bullet(F)$ is the number of vertices of $V_j$ that are above $v_0$ and that are matched to the left, and $h_\circ(F)$ is the number of 
vertices of $V_j$ that are below $v_0$ and that are matched to the right.

We now let $h_\emptyset$ be the height function corresponding to the minimal matching.
If $\mathfrak{m}$ is a pure admissible matching different from the minimal one, then its \emph{reduced} height function $h_{\text{red}}:=h-h_\emptyset$ has a positive maximum. 
We now choose a face $F$ of $G_\diamond$ as follows:
\begin{itemize}
\item[1.] $h_{\text{red}}(F)$ is a maximum of the function $h_{\text{red}}$.
\item[2.] among faces satisfying 1., $h(F)$ is a maximum of the function $h$.
\item[3.] among faces satisfying 1. and 2., $F$ is one that maximizes the quantity $d(F)-u(F)$, where $u(F)$ is the number of $i\leq j(F)$ such that $\diamond_i\in\{\prec, \prec'\}$, where $d(F)=j(F)-u(F)$, and where $j(F)$ is as above the unique index such that the face $F$ is incident to two vertices of $V_{j(F)}$.
\end{itemize}
We now claim that the face $F$ is flippable, and that the corresponding flip is decreasing. This statement follows from a case by case analysis, distinguishing according to the nature of the face $F$, i.e.\ according to the  possible choices of two symbols in $\{\prec, \succ, \prec', \succ'\}$ that give rise to the face $F$. We leave this verification to the reader, and observe that it suffices to prove the lemma.
\end{proof}

\begin{figure}[htpb]
  \centering
  \includegraphics[scale=0.7]{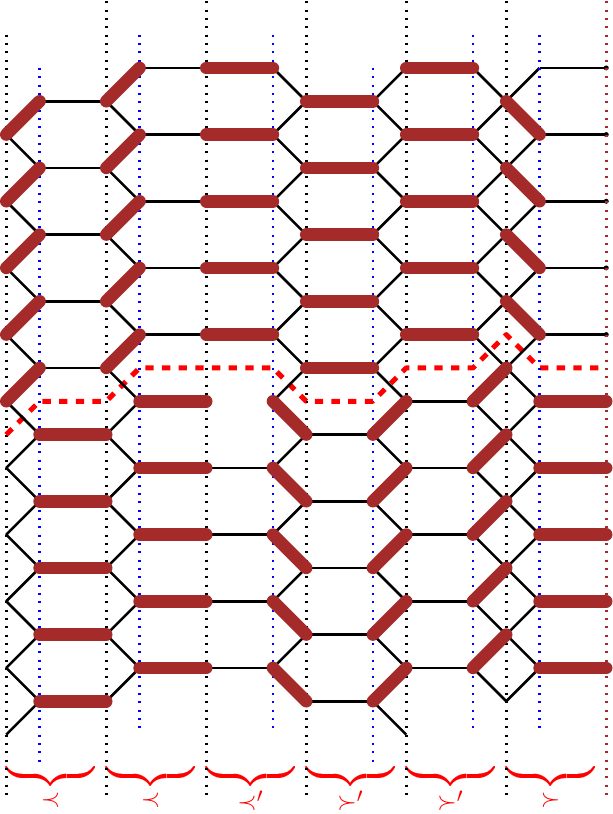}
 \caption{The minimal matching of the graph of Figure~\ref{fig:extendedgraph}. 
Below the red dotted line representing $Y$, all the vertices in $V_0\cup V_1
\cup \dots \cup V_k$ are matched to the left.}
  \label{fig:extendedminimal}
\end{figure}

\subsection{Enumerative results}
\label{sec:extenum}
The enumeration of elements of $\mathcal{S}_\diamond$ can be performed easily via the vertex operator formalism, exactly as we have proceeded for the special case treated in Section~\ref{sec:vertex}. Theorem~\ref{thm:extendedBij} thus implies:
\begin{thm}\label{thm:extendedCounting}
Let $\diamond$ be a word on the alphabet $\{\prec,\succ,\prec',\succ'\}$, and let $T_\diamond(q)$ be the
generating function of admissible matchings of the graph $G_\diamond$ that can
be obtained from the minimal matching by a finite sequence of flips, where the exponent of the variable $q$ marks the length of a minimal such sequence. Then one has
\begin{equation}
  T_\diamond(t) =
  \prod_{\substack{i<j \\ \diamond_i\in\{\prec,\prec'\} \\
      \diamond_j\in\{\succ,\succ'\}}}
  (1+\epsilon_{i,j}q^{j-i})^{\epsilon_{i,j}},
\end{equation}
where $\epsilon_{i,j}=\begin{cases}
    1 & \text{if ~ }
(\diamond_i,\diamond_j)\in\{(\prec,\succ'),(\prec',\succ)\},\\
    -1 & \text{if ~ }
(\diamond_i,\diamond_j)\in\{(\prec,\succ),(\prec',\succ')\}.
\end{cases}
$ 
\end{thm}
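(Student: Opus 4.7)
The strategy is a direct generalisation of the vertex operator derivation of Theorem~\ref{thm:mainWithStanleyWeights}. By Theorem~\ref{thm:extendedBij}, admissible matchings of $G_\diamond$ reachable by flips from $\mathfrak{m}_\diamond$ correspond bijectively to pure elements of $\mathcal{S}_\diamond$, and the minimal number of flips equals $\sum_{i=0}^{k} |\lambda^{(i)}|$. Since $\lambda^{(0)}=\lambda^{(k)}=\emptyset$, we thus have $T_\diamond(q)=\sum_{\vec\lambda} q^{\sum_{j=1}^{k-1}|\lambda^{(j)}|}$, the sum running over pure interlaced sequences in $\mathcal{S}_\diamond$.

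To realise this sum as a vacuum matrix element, I would assign to each letter $\diamond_i$ a vertex operator $A_i$ according to the dictionary $\prec\mapsto\Gamma_+$, $\succ\mapsto\Gamma_-$, $\prec'\mapsto\Gamma'_+$, $\succ'\mapsto\Gamma'_-$, and set $\sigma_i=+1$ when $\diamond_i\in\{\prec,\prec'\}$ and $\sigma_i=-1$ otherwise. Direct expansion (analogous to Lemma~\ref{lem:gfInTermsOfOperators}) yields
\begin{equation}
\langle\emptyset|A_1(t_1)\cdots A_k(t_k)|\emptyset\rangle = \sum_{\vec\lambda}\prod_{i=1}^k t_i^{\sigma_i(|\lambda^{(i)}|-|\lambda^{(i-1)}|)},
\end{equation}
the sum being over pure sequences in $\mathcal{S}_\diamond$. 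With the choice $t_i=q^{\sigma_i(k-i)}$, the exponent of $q$ collapses by telescoping to $\sum_{j=1}^{k-1}|\lambda^{(j)}|$, so this matrix element equals $T_\diamond(q)$.

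It then remains to evaluate the matrix element by moving all operators with subscript $-$ to the left (where they are absorbed by $\langle\emptyset|$) and all operators with subscript $+$ to the right (where they are absorbed by $|\emptyset\rangle$), leaving a trivial scalar product of the form \eqref{eq:trivialScalarProduct} equal to $1$. By the compact commutation relation \eqref{eq:compactCommutationRelation}, each pair $i<j$ with $\sigma_i=+$ and $\sigma_j=-$ (equivalently, $\diamond_i\in\{\prec,\prec'\}$ and $\diamond_j\in\{\succ,\succ'\}$) contributes a factor $(1+\epsilon\, t_it_j)^{\epsilon}$, with $\epsilon=-1$ if the two operators have the same primed status and $\epsilon=+1$ otherwise. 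Our parameter choice gives $t_it_j=q^{k-i}\cdot q^{j-k}=q^{j-i}$ for such a pair, and a direct inspection of the four possible cases confirms that $\epsilon$ coincides with $\epsilon_{i,j}$ as defined in the statement, yielding exactly the announced product.

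Since the vertex operator machinery is already fully set up in Section~\ref{sec:vertexpure}, I do not foresee any substantial obstacle: the only care required is bookkeeping, in particular verifying that the telescoping for our parameter choice reproduces the weight $q^{\sum_i|\lambda^{(i)}|}$ on pure sequences, and that the sign $\epsilon$ arising from the commutation relation matches the sign convention for $\epsilon_{i,j}$ in the theorem.
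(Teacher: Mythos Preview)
Your proposal is correct and follows essentially the same approach as the paper, which simply states that the enumeration is performed via the vertex operator formalism ``exactly as we have proceeded for the special case treated in Section~\ref{sec:vertex}'' and that Theorem~\ref{thm:extendedBij} then implies the result. Your write-up in fact fills in the details the paper leaves implicit: the dictionary $\diamond_i\mapsto A_i$, the parameter choice $t_i=q^{\sigma_i(k-i)}$ with its telescoping verification, and the identification of the commutation sign $\epsilon$ with $\epsilon_{i,j}$.
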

\begin{thm}\label{thm:extendedCountingWithStanleyWeights}
Let $\diamond\in\{\prec,\succ,\prec',\succ'\}^k$, and let
$S_\diamond(x_1,x_2,\dots,x_{k-1})$ be the generating function of admissible
matchings of the graph $G_\diamond$ that can be obtained from the minimal
matching by a finite sequence of flips, where the exponent of the variable $x_i$
marks the number of flips at abscissa $\frac{3}{2}i$ in a sequence of minimal length. Then one has
\begin{equation}
  S_\diamond(x_1,x_2,\dots,x_{k-1}) = 
  \prod_{\substack{i<j \\ \diamond_i\in\{\prec,\prec'\} \\
      \diamond_j\in\{\succ,\succ'\}}}
  (1+\epsilon_{i,j}x_ix_{i+1}\dots x_{j-1})^{\epsilon_{i,j}},
\end{equation}
where $\epsilon_{i,j}$ is as in Theorem~\ref{thm:extendedCounting}.
\end{thm}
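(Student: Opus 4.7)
The plan is to follow the strategy of the proof of Theorem~\ref{thm:mainWithStanleyWeights} essentially verbatim, via the vertex operator formalism, combined with the bijection of Theorem~\ref{thm:extendedBij}. The refined statement in the remark following that theorem identifies the number of flips at abscissa $\frac{3}{2}j$ with $|\lambda^{(j)}|$, so one can first rewrite
\begin{equation*}
S_\diamond(x_1,\ldots,x_{k-1}) = \sum_{\vec\lambda} \prod_{j=1}^{k-1} x_j^{|\lambda^{(j)}|},
\end{equation*}
where the sum runs over pure sequences $\vec\lambda=(\emptyset,\lambda^{(1)},\ldots,\lambda^{(k-1)},\emptyset)$ in $\mathcal{S}_\diamond$.

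Next I would establish a variant of Lemma~\ref{lem:gfInTermsOfOperators} tailored to the extended setting. To each letter $\diamond_i$ attach a vertex operator according to the rule $\prec \leftrightarrow \Gamma_+$, $\succ \leftrightarrow \Gamma_-$, $\prec' \leftrightarrow \Gamma'_+$, $\succ' \leftrightarrow \Gamma'_-$, and give it the argument $y_i$ if $\diamond_i \in \{\prec,\prec'\}$ and $1/y_i$ otherwise, where $y_i:=x_ix_{i+1}\cdots x_k$ with $x_k:=1$. Expanding each operator against its definition in terms of the interlacing relations yields
\begin{equation*}
S_\diamond = \langle\emptyset|\, \prod_{i=1}^k \Gamma^{\diamond_i}(y_i^{\pm 1})\, |\emptyset\rangle,
\end{equation*}
since each operator contributes $y_i^{|\lambda^{(i)}|-|\lambda^{(i-1)}|}$ and, after telescoping, the exponent of $x_m$ becomes $|\lambda^{(m)}|-|\lambda^{(0)}|=|\lambda^{(m)}|$ thanks to purity (the $x_k$ direction being harmless since $|\lambda^{(k)}|=0$).

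I would then evaluate this matrix element by commuting all lowering operators (those associated with $\succ$ or $\succ'$) to the left of every raising operator via the compact commutation relation \eqref{eq:compactCommutationRelation}. Each exchange of a raising operator at position $i$ with a lowering operator at position $j>i$ produces a factor
\begin{equation*}
(1+\epsilon_{i,j}\,y_i/y_j)^{\epsilon_{i,j}} = (1+\epsilon_{i,j}\,x_ix_{i+1}\cdots x_{j-1})^{\epsilon_{i,j}},
\end{equation*}
where $\epsilon_{i,j}=-1$ when the two operators are of the same \emph{type} (both primed or both unprimed) and $\epsilon_{i,j}=+1$ otherwise. Once all lowering operators sit on the left they are absorbed by $\langle\emptyset|$, and the remaining matrix element equals $1$ by \eqref{eq:trivialScalarProduct}, so we are left with the claimed product. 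The only bookkeeping point to double-check is that \emph{same operator type} in \eqref{eq:compactCommutationRelation} corresponds to \emph{same primedness} of $\diamond_i$ and $\diamond_j$ in the statement of the theorem; this follows directly from the above correspondence, so no serious obstacle is expected.
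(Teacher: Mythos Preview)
Your proposal is correct and follows essentially the same approach as the paper, which does not spell out a separate proof but simply states that the enumeration ``can be performed easily via the vertex operator formalism, exactly as we have proceeded for the special case treated in Section~\ref{sec:vertex}.'' Your write-up is precisely that computation made explicit, and your verification that ``same operator type'' in~\eqref{eq:compactCommutationRelation} matches ``same primedness'' of $\diamond_i,\diamond_j$ in the definition of $\epsilon_{i,j}$ is the only point worth checking, which you have done correctly.
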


\subsection{Interpretation as tilings}\label{sec:extendedtilings}
Admissible matchings of the graph $G_\diamond$ can also be interpreted as
tilings. Let $G_\diamond'$ be the dual graph of $G_\diamond$, i.e.\ the
graph with one vertex inside each bounded face of $G_\diamond$, and edges
representing face adjacencies in $G_\diamond$. Then each edge $e$ in a
matching of $G_\diamond$ can be interpreted as a tile in a tiling of
$G_\diamond'$, made by the union of the two faces of
$G_\diamond'$ corresponding to the two endpoints of $e$. In order to display this interpretation, one must first
choose a way of drawing the graph $G_\diamond'$. There is not necessarily a
good canonical way to do this, since for example the graph $G_\diamond'$ can
have multiple edges (this is the case for the graph $G_\diamond$ of
Figure~\ref{fig:extendedgraph}). In any case, it is always possible to fix some
drawing of the dual graph $G_\diamond'$, possibly with
broken lines instead of straight lines in order to take multiple edges into
account.
Figure~\ref{fig:extendedastiling} displays as a tiling of the matching of
Figure~\ref{fig:extendedmatching}. (Other displays would have been possible, would the underlying drawing of the dual graph have been different.)

\begin{figure}[htpb]
  \centering
  \includegraphics[scale=0.5]{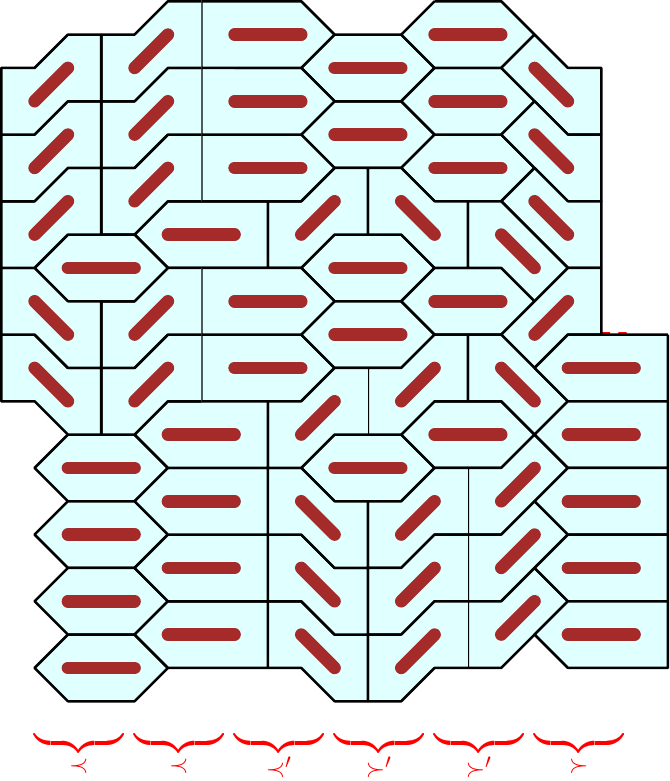}
 \caption{The admissible matching of Figure~\ref{fig:extendedmatching}
represented as a tiling. To make the comparison easier, we have drawn inside
each (blue) tile the corresponding (brown) edge of $G_\diamond$.}
  \label{fig:extendedastiling}
\end{figure}

\subsection{Vertex contraction, and the special case of steep tilings}
\label{sec:extreduc}
If $G$ is a graph and $v$ is a vertex
of degree $2$ of $G$ the \emph{contracted graph} $G//v$ is obtained from $G$ by
contracting the two edges incident to $v$ (thus identifying $v$ with its two
neighbours). See Figure~\ref{fig:contraction}. It is well known, and easy to see, that perfect matchings of $G$
are in bijection with perfect matchings of $G//v$.
\begin{figure}[htpb]
  \centering
  \includegraphics[scale=0.55]{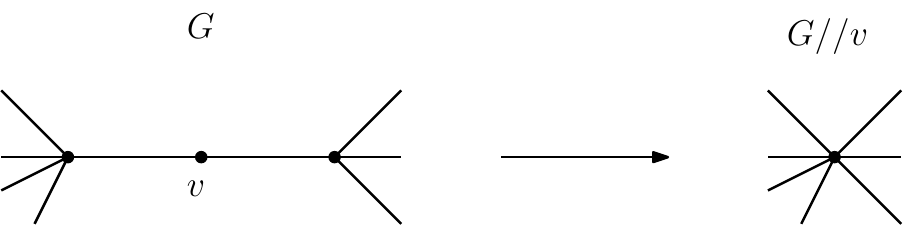}
 \caption{Contracting a vertex of degree $2$.}
  \label{fig:contraction}
\end{figure}
Now, because of the construction of $G_\diamond$, if there is a
$j\in\{1,2,\dots,k-1\}$ such that $\diamond_j\in \{\prec,\succ\}$ and
$\diamond_{j+1}\in \{\prec',\succ'\}$, the vertices in $V_j$ all have degree
$2$. We can thus contract all of them according to the procedure above. If we apply
this construction for all such values of $j$,  we are left with a graph
that we denote by $\widetilde
G_\diamond$. Note that the vertex contraction
removes all the bounded faces of degree $8$, so that $\widetilde G_\diamond$ only
has bounded faces of degree $4$ or $6$. 

Now, as noted at the beginning of this section, steep tilings correspond to the
case where $k$ is even (say $k=2\ell$), and  $\diamond_j\in \{\prec,\succ\}$ for $j$ odd and
$\diamond_j\in \{\prec',\succ'\}$ for $j$ even. In this case, all the vertices
in $V_j$ for $j$ odd can be contracted. One easily sees that in this case the
graph $G_\diamond$ has only bounded faces of degree $4$ and $8$, so that the
graph $\widetilde G_\diamond$ has only faces of degree $4$, see
Figure~\ref{fig:extendedrecoversteeptilings}. More precisely, $\widetilde
G_\diamond$ is the portion of a square lattice that intersects an
oblique strip of width $2\ell$, up to a rotation of $45^\circ$. 
Since the square lattice is self-dual, the corresponding tiles will be
$2\times1$ or $1\times 2$ dominos (after the rotation of $45^\circ$). We thus recover steep tilings as they
were defined in the first part of this paper. 

\begin{figure}[htpb]
  \centering
  \includegraphics[width=\textwidth]{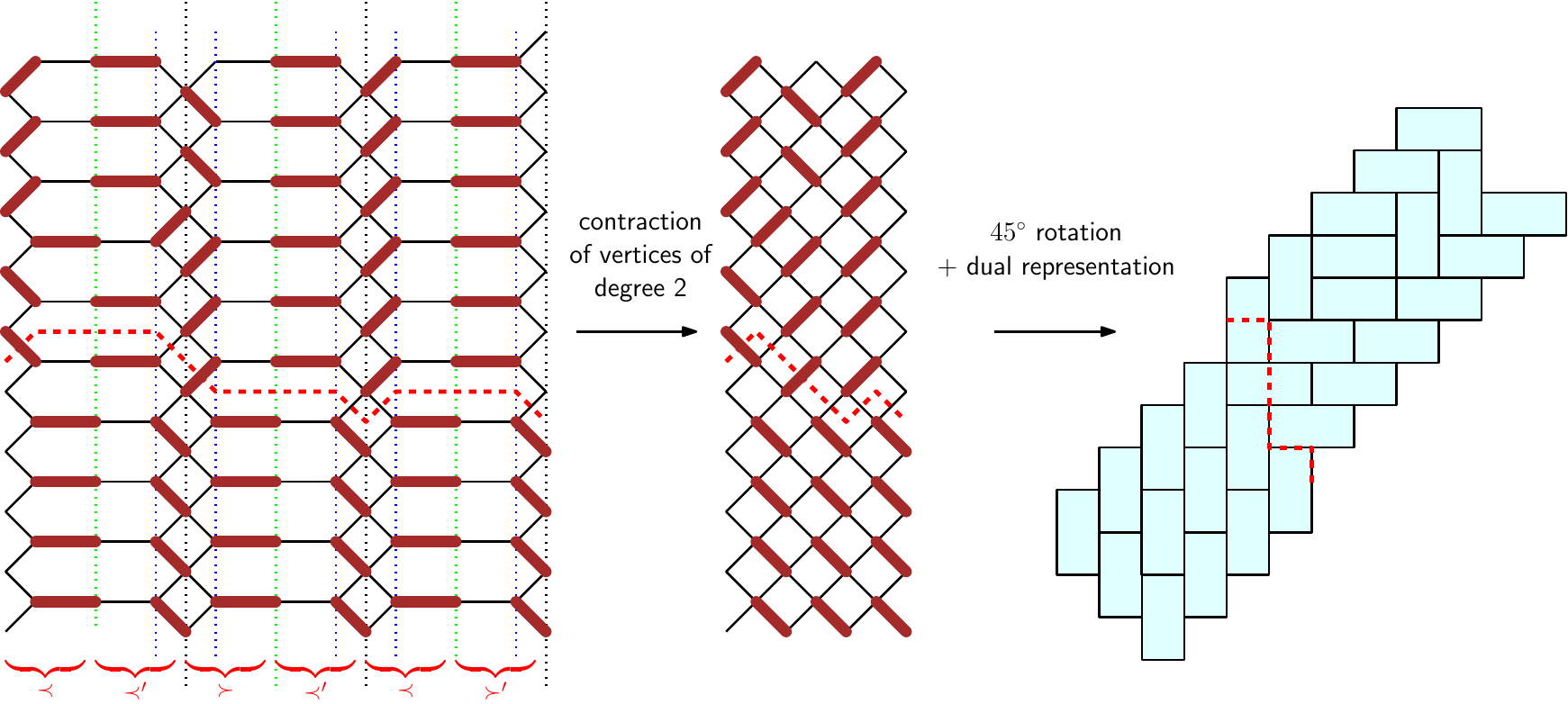}
 \caption{(a) A configuration from the extended model, corresponding to the word $\diamond=++'-+'+-'$. All the vertices on the green dotted lines have degree 2. (b) After contraction of these vertices of degree 2, we obtain a portion of a (rotated) square lattice, equipped with a matching. (c) By rotating the lattice by $45^\circ$ and looking at the dual representation in terms of dominos, we obtain a steep tiling of the oblique strip, as defined in Section~\ref{sec:tilings}.}
  \label{fig:extendedrecoversteeptilings}
\end{figure}

\subsection{The special case of plane partitions}
\label{sec:extpp}
In the case of plane partitions, the correspondence presented in this section
is well known, see e.g.\ \cite{OkounkovReshetikhin:schurProcess}. A plane partition (of width $2\ell$) can be defined as a sequence of
partitions $(\lambda^{(i)})_{-\ell\leq i\leq\ell}$ such that $\lambda^{(-\ell)}=\lambda^{(\ell)}=\emptyset$, and
$\lambda^{(i)}\prec\lambda^{(i+1)}$ if $i<0$ and 
$\lambda^{(i)}\succ\lambda^{(i+1)}$ if $i>0$. 
In our setting it is a pure element of $\mathcal{S}_\diamond$ for
$\diamond=\prec^\ell\succ^\ell$. By applying the construction of this section,
one recovers a standard bijection between plane partitions of width $2\ell$
a family of perfect matchings of the hexagonal lattice, see
Figure~\ref{fig:PP}(b). The dual interpretation, 
in terms of tilings by rhombi,
is also standard, see Figure~\ref{fig:PP}(c).
\begin{figure}[htpb]
  \centering
  \includegraphics[width=\textwidth]{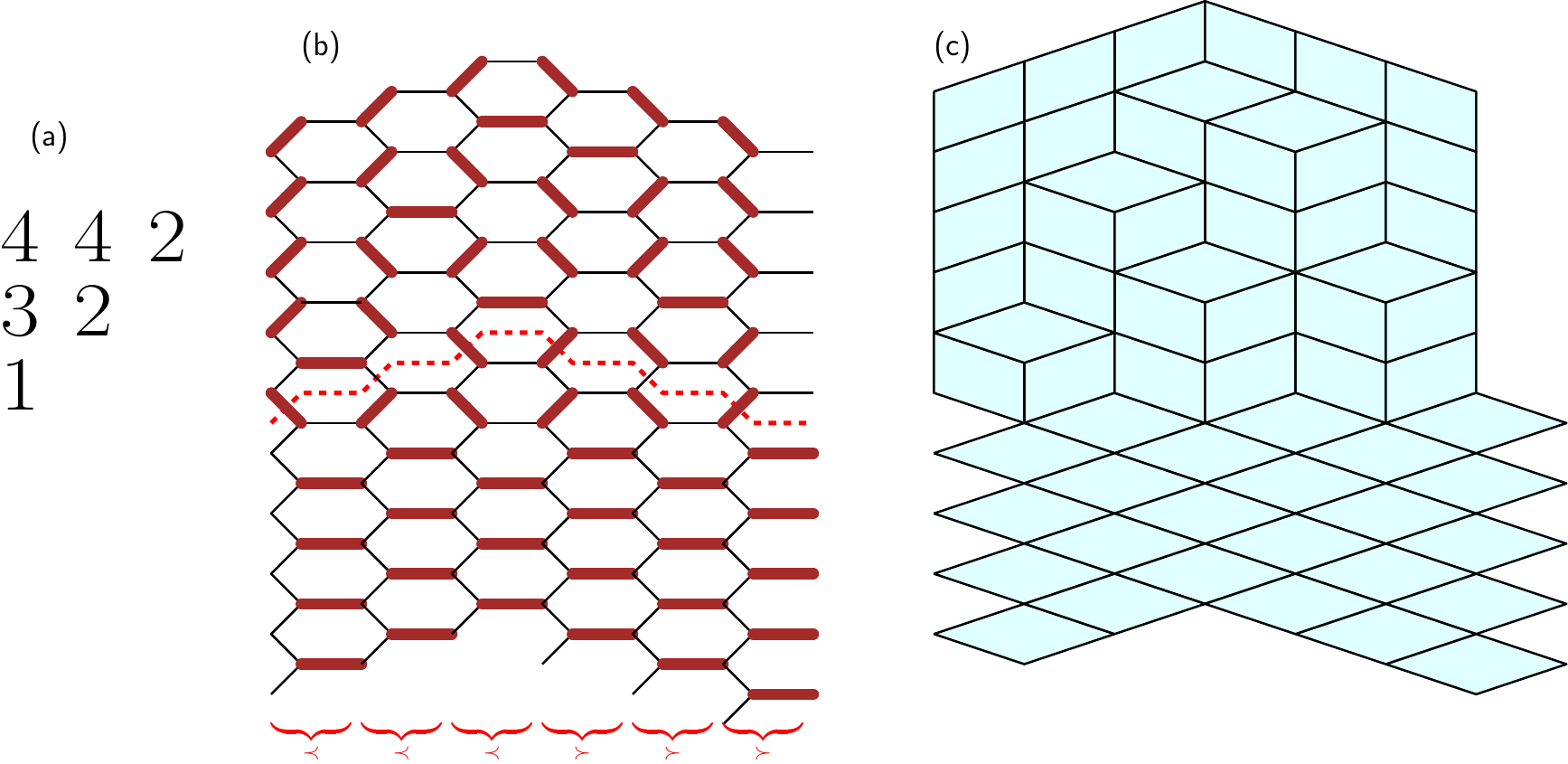}
 \caption{(a) A plane partition corresponding to $\lambda^{(-3)}=\lambda^{(3)}=\emptyset$,
$\lambda^{(-2)}=(1)$,
$\lambda^{(-1)}=(3)$, $\lambda^{(0)}=(4,2)$, $\lambda^{(1)}=(4)$,
$\lambda^{(2)}=(2)$, displayed as an array of numbers. (b) The corresponding matching on the hexagonal lattice. (c) The dual picture, as a rhombi tiling, that can also be viewed as a projected three-dimensional display of the array of (a).}  \label{fig:PP}
\end{figure}

\section{Conclusion and discussion} \label{sec:conc}

In this paper, we have introduced the so-called steep tilings. We have
studied their combinatorial structure and their various avatars, and
obtained explicit expressions for their generating function with
arbitrary asymptotic data and different types of boundary conditions
(pure, mixed, free and periodic). We have also introduced an extended
model interpolating between domino and rhombus tilings, where similar
expressions can be found. We now list a few concluding remarks, some
of which indicate directions for further research.

First, our derivation of the generating functions was done using the
vertex operator formalism, which yield compact proofs. It is however
not very difficult to convert these into \emph{bijective} proofs, for
instance by looking for bijective proofs of the commutation relations
\eqref{eq:gamcom} which form the heart of our derivation. This is
actually related to the celebrated Robinson-Schensted-Knuth
correspondence and, more precisely, its reformulation in terms of
growth diagrams introduced by Fomin. There is an important amount of
literature devoted to that subject, see e.g.\ \cite{Kra06} and
references therein. In particular, the sequences of interlaced
partitions that we consider in this paper are sometimes called
``oscillating supertableaux'' \cite{PP96}. Interestingly, the
bijective approach yields efficient random generation algorithms for
the perfect sampling of steep tilings \cite{BBBCCV}.

Second, beyond the computation of the partition function done in this
paper, we have access to detailed statistics of random steep tilings,
namely the probabilities of finding dominos of a given type at given
positions. In the case of pure boundary conditions, the correlations
of the associated particle system are known explicitly from the
general results of \cite{OkounkovReshetikhin:schurProcess} and,
remarkably, it is possible to deduce from them an explicit formula for
the inverse Kasteleyn matrix of steep tilings with arbitrary
asymptotic data \cite{BBCCR}.  This actually works in the context of
the extended model of Section~\ref{sec:extendedmodel}, and enables us
to recover in a unified and combinatorial way results from
\cite{BorodinFerrari} for rhombus tilings and from \cite{ChhitaYoung} for
domino tilings of the Aztec diamond. In the case of the mixed or
periodic boundary conditions, it should be possible to perform the
same study as the particle correlations are known
\cite{BR05,B2007}. In the case of free boundary conditions however,
even the particle correlations are (to the best of our knowledge)
unknown. Our vertex operator derivation of the partition function
suggests the possible definition of a ``reflected Schur process'' that
we would like to investigate.

Third, as in the case of plane partitions and domino tilings of the
Aztec diamond, steep tilings display a ``limit shape phenomenon'',
that may be observed experimentally via the above mentioned random
generation algorithms, and studied analytically by considering the
asymptotic behaviour of domino/particle correlations. Generally
speaking, all the questions that have been asked or answered in the
literature about rhombus or domino tilings (such as limit shapes, with
different kinds of scalings, or fluctuations, in the bulk or near the
boundary, etc.) can be asked for steep tilings, which opens a wide
area to be investigated.

Finally, a tantalizing question is whether it is possible to go beyond
the determi\-nantal/Schur/free-fermionic setting to study some refined
statistics on steep tilings. For instance, it is well-known that
domino tilings of the Aztec diamond correspond to the 2-enumeration of
alternating sign matrices (ASMs), and one may ask what an ASM with
different ``asymptotic data'' looks like. Regarding sequences of
interlaced partitions, non determinantal generalizations of Schur
processes are the so-called Macdonald processes \cite{MR3152785}. The
search for possible connections with tilings was initiated by
Vuleti\'{c} in her thesis \cite{MR2471939,MR3054919}, and we wonder
whether this could yield new interesting statistics in the case of
domino tilings. Last but not least, some refined weighting schemes for
domino tilings of the Aztec diamonds (leading to a host of new
fascinating limit shapes) were recently considered
\cite{ChhitaYoung,DFSG14}, which raises the question of a possible
connection with our approach.

\medskip
\noindent{\bf Acknowledgments.} 
We would like to thank Dan Betea, C\'edric Boutillier, Sunil Chhita,
Philippe Di Francesco, Patrik Ferrari, \'Eric Fusy, Sanjay Ramassamy,
Mirjana Vuleti\'{c}, Benjamin Young and Paul Zinn-Justin for helpful
discussions. We also thank the anonymous referee for suggesting
proving Proposition~\ref{prop:cyltilseqbij} via height functions.  JB
acknowledges the hospitality of LIAFA, where most of this work was
done, and of MSRI during the \emph{Random spatial processes} 2012
program.

\bibliographystyle{halpha}
\bibliography{aztec.bib}

\newcommand{\etalchar}[1]{$^{#1}$}
\begin{thebibliography}{ACJvM15}

\bibitem[ACJvM15]{ACJvM}
Mark Adler, Sunil Chhita, Kurt Johansson, and Pierre van Moerbeke.
\newblock Tacnode {GUE}-minor processes and double {A}ztec diamonds.
\newblock {\em Probab. Theory Related Fields}, 162(1-2):275--325, 2015,
  arXiv:1303.5279 [math.PR].

\bibitem[AJvM14]{AJvM14}
Mark Adler, Kurt Johansson, and Pierre van Moerbeke.
\newblock Double {A}ztec diamonds and the tacnode process.
\newblock {\em Adv. Math.}, 252:518--571, 2014, arXiv:1112.5532 [math.PR].

\bibitem[BBB{\etalchar{+}}17]{BBBCCV}
Dan Betea, C{\'e}dric Boutillier, J{\'e}r{\'e}mie Bouttier, Guillaume Chapuy,
  Sylvie Corteel, and Mirjana Vuleti{\'c}.
\newblock Perfect sampling algorithm for {S}chur processes.
\newblock {\em Markov Processes and Related Fields}, to appear, 2017,
  arXiv:1407.3764 [math.PR].

\bibitem[BBC{\etalchar{+}}17]{BBCCR}
C{\'e}dric Boutillier, J{\'e}r{\'e}mie Bouttier, Guillaume Chapuy, Sylvie
  Corteel, and Sanjay Ramassamy.
\newblock Dimers on rail yard graphs.
\newblock {\em Ann. Inst. Henri Poincar\'e D}, to appear, 2017,
  arXiv:1504.05176 [math-ph].

\bibitem[BC14]{MR3152785}
Alexei Borodin and Ivan Corwin.
\newblock Macdonald processes.
\newblock {\em Probab. Theory Related Fields}, 158(1-2):225--400, 2014,
  arXiv:1111.4408 [math.PR].

\bibitem[BF14]{BorodinFerrari}
Alexei Borodin and Patrik~L. Ferrari.
\newblock Anisotropic growth of random surfaces in {$2+1$} dimensions.
\newblock {\em Comm. Math. Phys.}, 325(2):603--684, 2014, arXiv:0804.3035
  [math-ph].

\bibitem[Bor07]{B2007}
Alexei Borodin.
\newblock Periodic {S}chur process and cylindric partitions.
\newblock {\em Duke Math. J.}, 140(3):391--468, 2007, arXiv:math/0601019
  [math.CO].

\bibitem[Bor11]{Borodin:dynamics}
Alexei Borodin.
\newblock Schur dynamics of the {S}chur processes.
\newblock {\em Adv. Math.}, 228(4):2268--2291, 2011, arXiv:1001.3442 [math.CO].

\bibitem[BR05]{BR05}
Alexei Borodin and Eric~M. Rains.
\newblock Eynard-{M}ehta theorem, {S}chur process, and their {P}faffian
  analogs.
\newblock {\em J. Stat. Phys.}, 121(3-4):291--317, 2005, arXiv:math-ph/0409059.

\bibitem[CEP96]{CEP1996}
Henry Cohn, Noam Elkies, and James Propp.
\newblock Local statistics for random domino tilings of the {A}ztec diamond.
\newblock {\em Duke Math. J.}, 85(1):117--166, 1996, arXiv:math/0008243
  [math.CO].

\bibitem[CSV11]{CSV}
Sylvie Corteel, Cyrille Savelief, and Mirjana Vuleti{\'c}.
\newblock Plane overpartitions and cylindric partitions.
\newblock {\em J. Combin. Theory Ser. A}, 118(4):1239--1269, 2011,
  arXiv:0903.2039 [math.CO].

\bibitem[CY14]{ChhitaYoung}
Sunil Chhita and Benjamin Young.
\newblock Coupling functions for domino tilings of {A}ztec diamonds.
\newblock {\em Adv. Math.}, 259:173--251, 2014, arXiv:1302.0615 [math.CO].

\bibitem[DFSG14]{DFSG14}
Philippe Di~Francesco and Rodrigo Soto-Garrido.
\newblock Arctic curves of the octahedron equation.
\newblock {\em J. Phys. A}, 47(28):285204, 34, 2014, arXiv:1402.4493 [math-ph].

\bibitem[EKLP92a]{EKLP1992}
Noam Elkies, Greg Kuperberg, Michael Larsen, and James Propp.
\newblock Alternating-sign matrices and domino tilings. {I}.
\newblock {\em J. Algebraic Combin.}, 1(2):111--132, 1992, arXiv:math/9201305
  [math.CO].
\newblock See also the erratum available at
  \url{http://faculty.uml.edu/jpropp/aztec.errata.html}.

\bibitem[EKLP92b]{EKLP1992b}
Noam Elkies, Greg Kuperberg, Michael Larsen, and James Propp.
\newblock Alternating-sign matrices and domino tilings. {II}.
\newblock {\em J. Algebraic Combin.}, 1(3):219--234, 1992, arXiv:math/9201305
  [math.CO].

\bibitem[Kac90]{Kac}
Victor~G. Kac.
\newblock {\em Infinite-dimensional {L}ie algebras}.
\newblock Cambridge University Press, Cambridge, third edition, 1990.

\bibitem[Ken05]{Kenyon2005}
R.~Kenyon.
\newblock {T}alk given at the workshop on {R}andom partitions and
  {C}alabi-{Y}au crystals, {A}msterdam, 2005.
\newblock Slides available at
  \url{http://www.math.brown.edu/~rkenyon/talks/pyramids.pdf}.

\bibitem[Kra96]{Kra96}
C.~Krattenthaler.
\newblock A bijective proof of the hook-content formula for super {S}chur
  functions and a modified jeu de taquin.
\newblock {\em Electron. J. Combin.}, 3(2):Research Paper 14, approx.\ 24 pp.\
  (electronic), 1996.
\newblock The Foata Festschrift.

\bibitem[Kra06]{Kra06}
C.~Krattenthaler.
\newblock Growth diagrams, and increasing and decreasing chains in fillings of
  {F}errers shapes.
\newblock {\em Adv. in Appl. Math.}, 37(3):404--431, 2006, arXiv:math/0510676
  [math.CO].

\bibitem[Mac95]{McDo}
I.~G. Macdonald.
\newblock {\em Symmetric functions and {H}all polynomials}.
\newblock Oxford Mathematical Monographs. The Clarendon Press Oxford University
  Press, New York, second edition, 1995.
\newblock With contributions by A. Zelevinsky, Oxford Science Publications.

\bibitem[MJD00]{MJD}
T.~Miwa, M.~Jimbo, and E.~Date.
\newblock {\em Solitons}, volume 135 of {\em Cambridge Tracts in Mathematics}.
\newblock Cambridge University Press, Cambridge, 2000.
\newblock Differential equations, symmetries and infinite-dimensional algebras,
  Translated from the 1993 Japanese original by Miles Reid.

\bibitem[Oko01]{Okounkov:wedge}
Andrei Okounkov.
\newblock Infinite wedge and random partitions.
\newblock {\em Selecta Math. (N.S.)}, 7(1):57--81, 2001, arXiv:math/9907127
  [math.RT].

\bibitem[OR03]{OkounkovReshetikhin:schurProcess}
Andrei Okounkov and Nikolai Reshetikhin.
\newblock Correlation function of {S}chur process with application to local
  geometry of a random 3-dimensional {Y}oung diagram.
\newblock {\em J. Amer. Math. Soc.}, 16(3):581--603 (electronic), 2003,
  arXiv:math/0107056 [math.CO].

\bibitem[PP96]{PP96}
Igor Pak and Alexander Postnikov.
\newblock Oscillating tableaux, {$S_p \times S_q$}-modules, and
  {R}obinson-{S}chensted-{K}nuth correspondence.
\newblock 8th International Conference on Formal Power Series and Algebraic
  Combinatorics, University of Minnesota, 1996.
\newblock Paper available at
  \url{http://www-igm.univ-mlv.fr/~fpsac/FPSAC96/articles.html}.

\bibitem[Pro93]{Propp1993}
James Propp.
\newblock Lattice structure for orientations of graphs.
\newblock Manuscript available at
  \url{http://faculty.uml.edu/jpropp/orient.html}, 1993.

\bibitem[Pro97]{Propp:talk}
James Propp.
\newblock {T}alk given at the {A}merican {M}athematical {S}ociety {M}eeting in
  {S}an {D}iego, {J}anuary 1997.
\newblock Slides available at \url{http://jamespropp.org/san_diego.pdf}.

\bibitem[Rem84]{Rem84}
Jeffrey~B. Remmel.
\newblock The combinatorics of {$(k,l)$}-hook {S}chur functions.
\newblock In {\em Combinatorics and algebra ({B}oulder, {C}olo., 1983)},
  volume~34 of {\em Contemp. Math.}, pages 253--287. Amer. Math. Soc.,
  Providence, RI, 1984.

\bibitem[Sta99]{Stanley}
Richard~P. Stanley.
\newblock {\em Enumerative combinatorics. {V}ol. 2}, volume~62 of {\em
  Cambridge Studies in Advanced Mathematics}.
\newblock Cambridge University Press, Cambridge, 1999.
\newblock With a foreword by Gian-Carlo Rota and appendix 1 by Sergey Fomin.

\bibitem[Sze08]{S2008}
Bal{\'a}zs Szendr{\H{o}}i.
\newblock Non-commutative {D}onaldson-{T}homas invariants and the conifold.
\newblock {\em Geom. Topol.}, 12(2):1171--1202, 2008, arXiv:0705.3419
  [math.AG].

\bibitem[Thu90]{Thurston:tiling}
William~P. Thurston.
\newblock Conway's tiling groups.
\newblock {\em Amer. Math. Monthly}, 97(8):757--773, 1990.

\bibitem[Vul09a]{MR2471939}
Mirjana Vuleti{\'c}.
\newblock A generalization of {M}ac{M}ahon's formula.
\newblock {\em Trans. Amer. Math. Soc.}, 361(5):2789--2804, 2009,
  arXiv:0707.0532 [math.CO].

\bibitem[Vul09b]{MR3054919}
Mirjana Vuleti{\'c}.
\newblock {\em The {P}faffian {S}chur process}.
\newblock ProQuest LLC, Ann Arbor, MI, 2009.
\newblock Thesis (Ph.D.)--California Institute of Technology, available online
  at \url{http://thesis.library.caltech.edu/2280/}.

\bibitem[Yan91]{BYYangPhD}
Bo-Yin Yang.
\newblock {\em Two enumeration problems about the {A}ztec diamonds}.
\newblock ProQuest LLC, Ann Arbor, MI, 1991.
\newblock Thesis (Ph.D.)--Massachusetts Institute of Technology, available
  online at \url{http://hdl.handle.net/1721.1/13937}.

\bibitem[You09]{Young:pyramid}
Ben Young.
\newblock Computing a pyramid partition generating function with dimer
  shuffling.
\newblock {\em J. Combin. Theory Ser. A}, 116(2):334--350, 2009,
  arXiv:0709.3079 [math.CO].

\bibitem[You10]{Young:orbifolds}
Benjamin Young.
\newblock Generating functions for colored 3{D} {Y}oung diagrams and the
  {D}onaldson-{T}homas invariants of orbifolds.
\newblock {\em Duke Math. J.}, 152(1):115--153, 2010, arXiv:0802.3948
  [math.CO].
\newblock With an appendix by Jim Bryan.

\bibitem[ZJ12]{PZJBerkeley}
Paul Zinn-Justin.
\newblock Schur functions and {L}ittlewood-{R}ichardson rule from exactly
  solvable tiling models.
\newblock Chern-Simons Research Lectures, Berkeley, 2012.
\newblock Slides available at
  \url{http://www.lpthe.jussieu.fr/~pzinn/semi/berkeley.pdf}.

\end{thebibliography}
\end{document}